\newcommand{\A}{\mathcal{A}}
\newcommand{\C}{\mathbb{C}}
\newcommand{\N}{\mathbb{N}}
\newcommand{\Q}{\mathbb{Q}}
\newcommand{\R}{\mathbb{R}}
\renewcommand{\S}{\mathcal{S}}
\newcommand{\X}{\mathfrak{X}}
\newcommand{\Z}{\mathbb{Z}}
\newcommand{\1}{\mathbf{1}}
\let\epsilon\varepsilon
\providecommand{\norm}[1]{\lvert\lvert #1 \rvert\rvert}
\newtheorem{theorem}{Theorem}[section]
\newtheorem{lemma}[theorem]{Lemma}
\newtheorem{prop}[theorem]{Proposition}
\newtheorem{cor}[theorem]{Corollary}
\theoremstyle{definition}
\newtheorem{definition}[theorem]{Definition}
\newtheorem{example}[theorem]{Example}
\theoremstyle{remark}
\newtheorem{remark}[theorem]{Remark}
\numberwithin{equation}{section}
\newcommand{\abs}[1]{\lvert#1\rvert}
\newcommand{\floor}[1]{\lfloor#1\rfloor}
\newcommand{\ceil}[1]{\lceil#1\rceil}
\begin{document}

\title[]{Modulus of continuity for spectral measures of suspension flows over Salem type substitutions}

\author{Juan Marshall-Maldonado}
\address{Juan Marshall-Maldonado, Department of Mathematics,
	Bar-Ilan University, Ramat-Gan, Israel}
\email{jgmarshall21@gmail.com}





\begin{abstract}
We study the spectrum of the self-similar suspension flows of subshifts arising from primitive substitutions. We focus on the case where the substitution matrix has a Salem number $\alpha$ as dominant eigenvalue. We obtain a H\"{o}lder exponent for the spectral measures for points away from zero and belonging to the field $\Q(\alpha)$. This exponent depends only on three parameters of each of these points: its absolute value, the absolute value of its real conjugate and its denominator.
\end{abstract}
\maketitle

\section{Introduction}
Substitutions appear as natural objects in many different research areas such as symbolic dynamics, number theory, combinatorics of words, Diophantine approximation, and so on. For instance, the study of the spectrum of dynamical systems arising from substitutions has left longstanding open problems. One of the most important is the \textit{Pisot substitution conjecture}, which asserts that if a substitution is irreducible and of \textit{Pisot type}, then the corresponding subshift has pure discrete spectrum (see \cite{akiyama2015pisot}).

A continuous counterpart is found studying tilings of the Euclidean space, and similar questions for the spectrum emerge again in this case. A suspension flow of a substitution subshift can be seen as a special kind of tiling of the real line. Previous work on this type of systems are the papers of Clark and Sadun \cite{ClarkSadun}, \cite{clark_sadun_2006} and Barge and Diamond \cite{barge2008cohomology}. By relating the eigenvalues of the system with the eigenvalues of the matrix representing the cohomological action of the substitution map, it is possible to conclude that generic tile suspensions over non Pisot irreducible substitutions are weakly mixing. In this case, the spectral measures do not have any atoms (except for the trivial one at the origin), and moduli of continuity of these measures are linked with rates of weak mixing (see \cite{Knill}). Decay rates of spectral measures also give information on its absolutely continuous and singular components.

In the weak mixing case, the work of Bufetov and Solomyak \cite{bufetov2014modulus} analyses the case in which the Perron-Frobenius eigenvalue of the substitution matrix has at least one conjugate outside the closed unit disk. They prove a H\"{o}lder decay of the spectral measures for a typical suspension flow (if the characteristic polynomial of the substitution matrix is irreducible), and a log-H\"{o}lder one for a self-similar suspension flow with this hypothesis. In both cases, for spectral parameters away from the origin. They have broadened many of the tools used in that article from this simple setting to more complex systems such as the ones associated to Bratelli-Vershik diagrams or translations flows on flat surfaces (see \cite{bufetov2018holder},\cite{JEP_2021__8__279_0}). A different approach in this last setting to this problem is found in \cite{forni2019twisted}, and the study for higher rank actions is in \cite{trevino2020quantitative}.

The main objective of this paper is to study the spectrum of the self-similar suspension flow when the substitution is of Salem type, i.e., when the dominant eigenvalue of the substitution matrix is a Salem number. This is a question raised in \cite{bufetov2014modulus}. This could be thought as a limit case, since we do not have homoclinic points (which give rise to eigenvalues in the Pisot case) nor an unstable subspace ensuring the absence of atoms (e.g., if there is a different conjugate of the Perron-Frobenius eigenvalue of the substitution matrix outside the unit disk). Instead, it acts as an isometry on the invariant subspace complementary to the subspace generated by the eigenvectors associated to the dominating eigenvalue and its inverse.

Salem substitutions arise naturally in the study of Veech groups: on each non arithmetic primitive Veech surface of genus two, there exists a pseudo-Anosov diffeomorphism whose dilatation is a Salem number of degree 4. The associated interval exchange transformation (by zippered rectangles) is self-similar and is (more precisely, the loop in its Rauzy diagram) defined by a substitution of Salem type (see \cite{bressaud2014deviation}). Explicit examples appear in \cite{bressaud2014deviation} and \cite{avila2016weak}.

The difference between the analogous result in \cite{bufetov2014modulus} and ours is reflected in the complicated dependence we have found on the parameters controlling the decay of the spectral measure. In fact, we are only able to find this decay when the spectral parameter belongs to the number field generated by the principal eigenvalue, as we show in the next result. For integers $l_0,\dots, l_n$, we denote by $\gcd(l_0,\dots,l_n)$ the greatest common divisor of $l_0,\dots, l_n$. Also, for a number $\eta = (l_0 + \dots + l_{d-1}\alpha^{d-1})/L$ (with $L\in\N$ and $l_0,\dots, l_{d-1}\in\Z$) belonging to a number field $\Q(\alpha)$, we say it is in \textit{reduced form} if $\gcd(l_0,\dots,l_{d-1},L)=1$. 
\begin{theorem}\label{theorem1}
	Let $\zeta$ be a Salem type, aperiodic and primitive substitution on $\A$, $\alpha$ its Perron-Frobenius eigenvalue and $\vec{p}$ the positive (left-)eigenvector of the substitution matrix. Let $\mathfrak{X}^{\vec{p}}_{\zeta}$ be the corresponding self-similar suspension flow and for any $a \in \A$, denote by $\nu_a$ the correlation measure associated to $a$. Consider $\sigma_0 :\Q(\alpha)\hookrightarrow \C$ the embedding corresponding to $\alpha\mapsto\alpha^{-1}$. Fix $A,B,C > 1$ and suppose $\abs{\omega} \in \Q(\alpha)\cap[ B^{-1},B]$ satisfies $\abs{\sigma_0(\omega)}\leq C$ and $L\leq A$, where $L \in \N$ is defined by the expression in reduced form $\omega = \dfrac{1}{L}(l_0 + \dots + l_{d-1}\alpha^{d-1})$. Then there exist $\gamma = \gamma(A,B,C),\, c=c(\zeta),\, r_0=r_0(\omega)>0$ such that
	\[
	\nu_a([\omega-r,\omega+r]) \leq cr^{\gamma},
	\]
	for all $0< r < r_0$ and $a\in\A$.
\end{theorem}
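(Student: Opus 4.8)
\emph{Proof plan.} The argument runs through the Bufetov--Solomyak renormalisation scheme, the new point being an arithmetic non-concentration estimate for orbits in $\Q(\alpha)$ which promotes their log-H\"older bound to a genuine H\"older one. \emph{Step 1: reduction to a twisted matrix cocycle.} Using the spectral representation of $\mu_a$ from Theorem~\ref{spectralthm} together with a Fej\'er-kernel comparison, one reduces the claim to a bound on twisted Birkhoff integrals over supertiles. For $b\in\A$ put $\Phi^{(N)}_b(\xi)=\int_0^{\alpha^{N}p_b}e^{-2\pi i\xi t}\,g_a(t)\,dt$, where $g_a$ is the indicator of the $a$-tiles inside the level-$N$ $b$-supertile; then $\vec\Phi^{(N)}(\xi)=\mathcal M(\alpha^{N-1}\xi)\cdots\mathcal M(\xi)\,\vec\Phi^{(0)}(\xi)$, with $\mathcal M(\xi)_{bc}=\sum_{i:\ \zeta(b)_i=c}e^{-2\pi i\xi o_i}$ ($o_i$ the offset of the $i$-th tile of $\zeta(b)$, so $\mathcal M(0)=S$), and $\vec\Phi^{(0)}(\xi)=\tfrac{1-e^{-2\pi i\xi p_a}}{2\pi i\xi}\,\vec e_a$. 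Since an interval of length $\asymp\alpha^{N}$ meets only $O(1)$ supertiles of level $N$, and partial boundary pieces are absorbed by the same recursion one level down, one obtains, for $\alpha^{N}\asymp r^{-1}$,
\[
\mu_a([\omega-r,\omega+r])\ \le\ C\,r^{2}\max_{0\le N'\le N}\max_{b\in\A}\bigl\lvert\Phi^{(N')}_b(\omega)\bigr\rvert^{2},
\]
so it suffices to prove $\lVert\vec\Phi^{(N)}(\omega)\rVert\le C\,\alpha^{(1-\gamma/2)N}$ for a suitable $\gamma=\gamma(A,B,C)>0$, after which the right-hand side is $\le C'r^{\gamma}$.

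\emph{Step 2: entrywise domination and a proportion-of-deficits estimate.} One has $\lvert\mathcal M(\xi)_{bc}\rvert\le S_{bc}$ entrywise, with a \emph{strict} deficit $\lvert\mathcal M(\xi)_{bc}\rvert\le S_{bc}-c_0(\epsilon_0)$ as soon as two occurrences of $c$ in $\zeta(b)$ have offsets $o,o'$ with $\lVert\xi(o-o')\rVert_{\R/\Z}\ge\epsilon_0$. Replacing $\zeta$ by a power if necessary, fix once and for all such a pair, i.e.\ a nonzero $o^\ast\in\kappa\Z[\alpha]$ (using the description of the offsets from Lemma~\ref{lemmaprod}) and a position $(b^\ast,c^\ast)$, with $\omega o^\ast\notin\Z$ (the degenerate case, where $\omega$ annihilates all offset differences modulo $\Z$, forces $\omega$ into a discrete set carrying no atom by aperiodicity and is disposed of separately). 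Since $\lvert\vec\Phi^{(N)}(\omega)\rvert\le\bigl(\prod_{k<N}\lvert\mathcal M(\alpha^{k}\omega)\rvert\bigr)\lvert\vec\Phi^{(0)}(\omega)\rvert$ entrywise and $\lVert\vec\Phi^{(0)}(\omega)\rVert\asymp\lvert\omega\rvert^{-1}$ is bounded (here $\lvert\omega\rvert\ge(\kappa B)^{-1}$ enters), we are reduced to bounding a product of $N$ nonnegative matrices each dominated by the primitive matrix $S$. If a proportion $\ge\theta_\ast$ of the indices $k<N$ lies in $\mathcal G_N=\{k<N:\ \lVert\alpha^{k}\omega o^\ast\rVert_{\R/\Z}\ge\epsilon_0\}$, then, primitivity of $S$ propagating a single deficient entry to all entries after boundedly many further multiplications, the product has norm $\le C\,\alpha^{N}(1-c_1)^{\theta_\ast N}$ with $c_1=c_1(S,\epsilon_0)>0$, which gives precisely $\lVert\vec\Phi^{(N)}(\omega)\rVert\le C\alpha^{(1-\gamma/2)N}$ for $\gamma/2=-\theta_\ast\log(1-c_1)/\log\alpha>0$.

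\emph{Step 3: the arithmetic heart (and the main obstacle).} It remains to prove $\#\mathcal G_N\ge\theta_\ast(A,B,C)\,N-O_{\omega}(1)$. Set $\omega'=\omega o^\ast$; then $\omega'\in\tfrac1L\Z[\alpha]$ with $L\le A$, while $\lvert\omega'\rvert$ and $\lvert\sigma_0(\omega')\rvert$ are bounded in terms of $B$ and $C$. From $\mathrm{Tr}(\alpha^{k}\omega')\in\tfrac1L\Z$ one gets, modulo $\tfrac1L\Z$,
\[
\alpha^{k}\omega'\ \equiv\ -\,\alpha^{-k}\sigma_0(\omega')\ -\ 2\sum_{l}\lvert\sigma_l(\omega')\rvert\cos\!\bigl(k\phi_l+\psi_l\bigr),
\]
where $e^{i\phi_l}$ runs over the conjugates of the Salem number $\alpha$ on the unit circle and $\psi_l=\arg\sigma_l(\omega')$. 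The first term decays like $\alpha^{-k}$ (negligible after $O_\omega(1)$ initial steps), and the second is $F(k\phi_1,\dots)$ for a real-analytic $F$ on the torus that is \emph{nonconstant} (its amplitudes $2\lvert\sigma_l(\omega')\rvert$ do not vanish, as $\omega'\neq0$). As no $e^{i\phi_l}$ is a root of unity, $(k\phi_1,\dots)_k$ equidistributes on a positive-dimensional subtorus on which $F$ is still nonconstant, so the frequency of $k$ with $\lVert\alpha^{k}\omega'\rVert_{\R/\Z}<\epsilon_0$ converges to the measure of $\{\,F\in\tfrac1L\Z+[-2\epsilon_0,2\epsilon_0]\,\}$; this is $<1$ once $\epsilon_0$ is small relative to $1/L$, and it can be bounded by $1-\theta_\ast$ \emph{uniformly} over the admissible range of amplitudes $(\lvert\sigma_l(\omega')\rvert)_l$ — large amplitudes only help, and the regime where all amplitudes are bounded is a finite (bounded-height) set handled by compactness. \textbf{This uniform non-concentration for a family of $\Q(\alpha)$-orbits of bounded height is the crux of the proof}: it is exactly the ingredient missing in the generic self-similar case of \cite{BS}, its quality degrades as $A,B,C$ grow (whence $\gamma=\gamma(A,B,C)$), and the transient $O_\omega(1)$ is what makes $r_0$ depend on $\omega$. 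Combining Steps 1--3 yields $\mu_a([\omega-r,\omega+r])\le c\,r^{\gamma}$ for all $0<r<r_0(\omega)$ and all $a\in\A$.
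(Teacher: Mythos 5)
Your Steps 1 and 2 reconstruct, in matrix-cocycle language, exactly what the paper imports wholesale from \cite{BS} as Lemma \ref{lemmaprod} and Proposition \ref{Spectralbound}: the whole theorem reduces to showing that $\norm{\omega\kappa\alpha^n}_{_{\R/\Z}}\geq\delta(A,B,C)$ for a positive proportion of $n\leq N$, with the proportion uniform in $\omega$ and the threshold $N_0(\omega)$ absorbed into $r_0(\omega)$. Your Step 3 then starts down the same road as the paper (the trace identity $\mathrm{Tr}(L\eta\alpha^n)=L\eta\alpha^n+\sigma_0(L\eta)\alpha^{-n}+2R_n$ and equidistribution of $(n\theta_1,\dots,n\theta_m)$ on the torus, cf.\ Corollary \ref{cor}). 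So the architecture matches. The problem is that the statement you yourself flag in bold as ``the crux of the proof'' --- the uniform non-concentration of $F(\vec x)=-2\sum_j H_j\cos(2\pi x_j+\psi_j)$ near $\tfrac1L\Z$ over all admissible amplitude vectors --- is precisely where the paper's actual work lies (Lemmas \ref{lemma1}--\ref{lemma4}, via Selberg polynomials, Proposition \ref{IntegralBeurling} and the Bessel bound of Proposition \ref{Bessel}), and your two-line justification of it does not close the gap. ``Large amplitudes only help'' is true but not free: one amplitude can be huge while the others are tiny, and you must integrate out the large-amplitude variable and control the pushforward of Lebesgue measure under $x\mapsto 2H\cos(2\pi x)$, whose density blows up like $(4H^2-y^2)^{-1/2}$ at the edges; the paper does this via $\abs{J_0(4\pi kH)}\leq\sqrt{2/(4\pi^2 kH)}$ summed against the Selberg coefficients, and some quantitative substitute is needed. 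Worse, the genuinely delicate regime is when \emph{all} amplitudes are small and all residues $a_j$ vanish, so that $\{\eta\alpha^n\}$ clusters near $0$: your appeal to ``a finite bounded-height set handled by compactness'' does dispose of it in principle (bounded denominator plus bounded conjugates does force finiteness), but you still must rule out that $F\equiv 0$ on the closure of the orbit for such $\omega$; the paper instead lower-bounds the geometric mean $M=(\prod_j\abs{b_j})^{1/m}$ of the amplitudes via Garsia's Lemma \ref{Garsia} and chooses $\delta$ accordingly, which is what makes $\gamma(A,B,C)$ explicit.

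A second, smaller defect: your hedge that $(k\phi_1,\dots,k\phi_m)$ might only equidistribute on a proper subtorus, ``on which $F$ is still nonconstant,'' is unsupported --- on a proper subtorus a nonconstant trigonometric polynomial can perfectly well restrict to a constant. You do not need the hedge: for Salem numbers $1,\theta_1,\dots,\theta_m$ are rationally independent (Proposition \ref{ratind} of the paper), so the orbit equidistributes on the full torus and nonconstancy of $F$ there follows from $\sigma_j(\omega')\neq 0$. Incorporate that proposition explicitly, and supply the quantitative sublevel-set estimate for $F$ (either the paper's Selberg--Bessel computation or an equivalent stationary-phase bound), and the proof is complete.
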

Under an arithmetic condition there is a uniform dependence of $\gamma$ on the variables $B,C$, as we state in our second result.
\begin{theorem}\label{theorem2}
	Let $\zeta$ be a Salem type, aperiodic and primitive substitution and $\nu_a$ the correlation measure associated to the letter $a\in\A$ on the self-similar suspension flow.  There exists $\kappa\in\Q(\alpha)$ an explicit positive constant such that the next statement holds: for a fixed $A > 1$, suppose
	\begin{itemize}
		\item There exists $n\in\{0,\dots,d-1\}$ such that $\text{Tr}(L\kappa\omega\alpha^n)\not\equiv 0 \: (\text{mod }L)$, and
		\item $L\leq A$,
	\end{itemize}
	where $L \in \N$ is defined by the expression in reduced form $0\neq\omega \kappa = \dfrac{1}{L}(l_0 + \dots + l_{d-1}\alpha^{d-1})$. Then there exist $\gamma = \gamma(A),\, c=c(\zeta),\, r_0=r_0(\omega)>0$ such that
	\[
	\nu_a([\omega-r,\omega+r]) \leq cr^{\gamma},
	\]
	for all $0< r < r_0$, $a\in\A$.
\end{theorem}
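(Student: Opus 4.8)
The plan is to revisit the argument behind Theorem~\ref{theorem1} and to locate precisely where the archimedean bounds $\abs{\omega\kappa}\le\kappa^{-1}B$ and $\abs{\sigma_0(\omega)}\le\abs{\sigma_0(\kappa)}^{-1}C$ are used, then to replace them by the arithmetic hypothesis. Recall the structure of that argument: using the self-similarity of $\X^{\vec p}_\zeta$ and Theorem~\ref{spectralthm}, the estimate $\mu_a([\omega-r,\omega+r])\le cr^{\gamma}$ reduces, through a Bufetov--Solomyak type inequality for spectral measures, to an upper bound on the twisted Birkhoff integrals along the geometric scales $r\asymp\alpha^{-n}$, hence to a bound $\norm{\mathcal{M}^{(n)}(\omega)}\le c'\alpha^{\beta n}$ for the spectral cocycle $\mathcal{M}^{(n)}(\omega)=\mathcal{M}(\alpha^{n-1}\omega)\cdots\mathcal{M}(\omega)$ with some $\beta<\tfrac12$, from which $\gamma$ emerges as a positive, explicit function of $\tfrac12-\beta$. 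Each factor $\mathcal{M}(\alpha^j\omega)$ has entries that are exponential sums $\sum_{\ell}e^{-2\pi i\,\alpha^j\omega q_\ell}$ with the $q_\ell$ built from coordinates of $\vec p$, so that $\kappa q_\ell\in N^{-1}\Z[\alpha]$ for a fixed $N$. The obstruction to any such bound is the degenerate configuration in which all phases $\alpha^j\omega q_\ell$ lie in $\Z$: there $\mathcal{M}(\alpha^j\omega)$ equals the substitution matrix and $\norm{\mathcal{M}^{(n)}(\omega)}\asymp\alpha^n$. In Theorem~\ref{theorem1}, $B$ and $C$ serve only to keep the orbit $(\alpha^j\omega)_{j\ge 0}$ quantitatively away from this degenerate locus.

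Next I would show that the first hypothesis of Theorem~\ref{theorem2} is a scale-free substitute for that avoidance. Write $\omega\kappa=L^{-1}\beta$ with $\beta=l_0+\dots+l_{d-1}\alpha^{d-1}\in\Z[\alpha]$ in reduced form. Clearing denominators, each relevant phase satisfies $M\alpha^j\omega q_\ell\in\Z[\alpha]$ for a fixed $M$, so modulo $1$ it is governed, via $\mathrm{Tr}(\theta)=\theta+\sigma_0(\theta)+\sum_i\sigma_i(\theta)\in\Z$ for $\theta\in\Z[\alpha]$, by $-\sigma_0(M\alpha^j\omega q_\ell)-\sum_i\sigma_i(M\alpha^j\omega q_\ell)$, where the $\sigma_i$ are the embeddings onto the unimodular conjugates of $\alpha$. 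Since $\alpha$ is Salem, $\abs{\sigma_0(\alpha^j)}=\alpha^{-j}\to0$ while $\abs{\sigma_i(\alpha^j)}=1$, so the vector of circle-parts of these phases is, up to an error $O(\alpha^{-j}\abs{\sigma_0(\beta)})$, the orbit of the fixed toral translation $x\mapsto x+(\arg\sigma_i(\alpha))_i$ started at the point $v(\beta)$ determined by the $\sigma_i(\beta)$. The combinatorial fact I would establish is: for $L\le A$ there are only finitely many possible orbit closures, each a coset of the fixed subtorus $H=\overline{\{\,n\,(\arg\sigma_i(\alpha))_i:n\in\Z\,\}}$; the degenerate locus corresponds to the coset $H$ itself; and ``$\mathrm{Tr}(L\kappa\omega\alpha^n)\equiv 0\pmod L$ for every $n\in\{0,\dots,d-1\}$'' is equivalent, by nondegeneracy of the trace pairing and duality with the basis $1,\alpha,\dots,\alpha^{d-1}$, to $\beta\in L\cdot f'(\alpha)^{-1}\Z[\alpha]$, which is exactly the condition $v(\beta)\in H$. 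Thus the hypothesis $v(\beta)\notin H$ places the orbit on a coset at distance $\ge\delta(A)>0$ from $H$; since $0\in H$, the circle coordinates of every phase stay at distance $\ge\delta(A)$ from $\Z$, uniformly in $j$ and in $\omega$.

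With this in hand I would run the cocycle estimate as in Theorem~\ref{theorem1}. On each of the finitely many admissible cosets, equidistribution of $(\alpha^j\omega q_\ell)_j$ (qualitatively classical, the unimodular conjugates of a Salem number not being roots of unity, so $H$ has positive dimension) identifies $\lim_n\tfrac1n\log\norm{\mathcal{M}^{(n)}(\omega)}$ with an integral $\int\log\norm{\mathcal{M}(\xi)}\,d\nu_\beta(\xi)$ over the coset; this integral is strictly below $\tfrac12\log\alpha$ because $\zeta$ is aperiodic and primitive, so the cocycle matrices are not all permutation matrices and the cancellation genuinely accumulates; and, there being finitely many cosets for $L\le A$ and finitely many substitutions under consideration, the supremum of these exponents over the admissible cosets is some $\beta(A)<\tfrac12$ depending on $A$ alone. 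Feeding $\norm{\mathcal{M}^{(n)}(\omega)}\le c'\alpha^{\beta(A)n}$ back into the Bufetov--Solomyak reduction yields $\mu_a([\omega-r,\omega+r])\le cr^{\gamma(A)}$ with $\gamma(A)>0$. The threshold $r_0$ remains $\omega$-dependent because the error terms $O(\alpha^{-j}\abs{\sigma_0(\beta)})$ above, together with the finitely many initial cocycle factors at which $\alpha^j\omega$ is not yet in the asymptotic regime, are negligible only past a scale governed by $\abs{\omega}$ and $\abs{\sigma_0(\omega)}$ --- which is why those quantities survive in $r_0$ but no longer in the exponent.

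The main obstacle I anticipate is making the equidistribution step deliver what is needed with constants depending on $A$ only. The cleanest route, which I would try first, avoids equidistribution rates altogether: prove directly that if $\alpha^j\omega q_\ell$ lies within $\delta$ of $\Z$ for $d$ consecutive indices $j$, then solving these near-integrality relations for $\beta\bmod L$ --- using that the change-of-basis matrix between $(\alpha^n)_{0\le n<d}$ and the trace-dual basis has integer entries and determinant dividing a fixed power of the discriminant --- forces, for $\delta$ small enough in terms of $L$, that $\mathrm{Tr}(\beta\alpha^n)\equiv0\pmod L$ for all $n\in\{0,\dots,d-1\}$, contradicting the hypothesis. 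This produces a uniform gap $\delta=\delta(A)$ between the cocycle and the degenerate locus on every window of $d$ steps, which is the quantitative input the rest of the argument needs. Turning such a one-step (or one-block) gap into the required strict inequality $\beta(A)<\tfrac12$ for the Lyapunov-type exponent --- i.e. showing that the cancellation compounds at the right rate, and not merely that each block is nondegenerate --- is the remaining delicate point, and is where I expect the bulk of the work to lie.
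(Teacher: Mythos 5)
There is a genuine gap, and it sits exactly where your argument claims the most: in the assertion that the hypothesis $\mathrm{Tr}(L\kappa\omega\alpha^n)\not\equiv 0\ (\mathrm{mod}\ L)$ forces \emph{every} phase to stay at distance $\geq\delta(A)$ from $\Z$. Writing $\eta=\omega\kappa$, the identity $\mathrm{Tr}(L\eta\alpha^n)=L\eta\alpha^n+\sigma_0(L\eta)\alpha^{-n}+2R_n$ (equation (3.1) of the paper) shows that $\{\eta\alpha^n\}$ is, up to the decaying term $\sigma_0(L\eta)\alpha^{-n}/L$, equal to $(a_n-2R_n)/L\ (\mathrm{mod}\ 1)$, where $a_n=\mathrm{Tr}(L\eta\alpha^n)\ \mathrm{mod}\ L$ and $R_n=\sum_j H_j\cos(2\pi n\theta_j-\phi(\theta_j))$ is an almost periodic real sequence with amplitude $H=\sum_j H_j$. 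When $2H\geq 1$ (which happens as soon as the coefficients $l_i$ are large, with no control from $L\leq A$), the quantity $(a_n-2R_n)/L$ passes near $\Z$ for a positive proportion of $n$ even when $a_n\neq 0$; nothing about the trace congruence prevents the real number $2R_n$ from being close to $a_n$ modulo $L$. For the same reason your proposed ``cleanest route'' fails: near-integrality of $\{\eta\alpha^j\}$ for $d$ consecutive $j$ never contradicts the hypothesis, because $a_j$ is an integer and $2R_j$ is a free real parameter, so no congruence $\mathrm{Tr}(L\eta\alpha^j)\equiv 0$ can be extracted. Relatedly, your coset picture is off: by Proposition \ref{ratind} the numbers $1,\theta_1,\dots,\theta_m$ are rationally independent, so the subtorus $H$ you define is all of $(\R/\Z)^m$ and there is only one coset; there is no positive distance to a ``degenerate locus'' to exploit.

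What the hypothesis actually buys --- and what the paper uses --- is weaker but sufficient: it guarantees that at least one residue class $j^*$ modulo the period $P\leq L^d$ has $a_{j^*}\neq 0$, and then one must prove that the \emph{frequency} of $n$ with $\norm{\eta\alpha^n}_{_{\R/\Z}}\geq\delta$ is bounded below, not that every $n$ works. That lower bound is the quantitative heart of the argument (Corollary \ref{cor} together with Lemmas \ref{lemma3} and \ref{lemma4}): one approximates $\chi_{J(\delta)}$ by Selberg polynomials, integrates against the equidistributed orbit $(n\theta_1,\dots,n\theta_m)$, and controls the resulting products of Bessel functions $J_0(4\pi kH_j/L)$ to show $\int\chi_{J(\delta)}((-2R(\vec{x})+a_{j^*})/L)\,d\vec{x}\geq 1/2$ with $\delta=\delta(L)$ only. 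The uniformity in $A$ then comes from $P\leq A^d$, giving a lower density $\geq 1/(3A^d)$, which is fed into the product bound of Lemma \ref{lemmaprod} (no matrix cocycle or Lyapunov-exponent integral formula is needed --- the scalar product $\prod_n(1-\lambda\norm{\omega\kappa\alpha^n}^2_{_{\R/\Z}})$ already does the job) and then into Proposition \ref{Spectralbound}. Your proposal omits this entire quantitative step, and the Lyapunov-exponent substitute you sketch is, as you yourself note, unproved where it matters.
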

The first condition above is generic, in the sense that the complement of the set of $\omega$'s satisfying it, is contained in a finite union of lattices of $\Q(\alpha)$, according to
\begin{prop}\label{lattice}
	Suppose $\eta = \dfrac{1}{L}(l_0 + \dots + l_{d-1}\alpha^{d-1}) \in \Q(\alpha)$ is in reduced form and $\text{Tr}(L\eta\alpha^n)\equiv 0 \: (\text{mod }L)$ for all $n=0,\dots,d-1$. Then $L$ divides $E(\alpha)$, where $E(\alpha)$ is the least common multiple of the denominators of the dual basis of $\{1,\alpha,\dots,\alpha^{d-1}\}$ (expressed in reduced form).
\end{prop}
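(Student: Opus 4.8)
The plan is to read the congruence conditions as the statement that $\eta$ lies in the trace dual (codifferent) of $\Z[\alpha]$, and then to extract the possible denominators $L$ from the structure of the finite abelian group $\Z[\alpha]^{\vee}/\Z[\alpha]$; the point is that for a Salem number $\alpha$ the minimal polynomial is reciprocal and $\alpha^{-1}$ again lies in $\Q(\alpha)$, which rigidifies this structure.

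First I would reformulate the hypothesis. Since $L\eta=l_0+\cdots+l_{d-1}\alpha^{d-1}\in\Z[\alpha]$, each $\text{Tr}(L\eta\alpha^n)$ is a rational integer, and the assumption $\text{Tr}(L\eta\alpha^n)\equiv 0\ (\text{mod }L)$ for $n=0,\dots,d-1$ says precisely that $\text{Tr}(\eta\alpha^n)=\tfrac1L\text{Tr}(L\eta\alpha^n)\in\Z$ for all $n$; since $\{1,\alpha,\dots,\alpha^{d-1}\}$ is a $\Z$-basis of $\Z[\alpha]$, this is exactly $\eta\in\Z[\alpha]^{\vee}:=\{\xi\in\Q(\alpha):\text{Tr}(\xi\,\Z[\alpha])\subseteq\Z\}$. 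Writing $f$ for the minimal polynomial of $\alpha$, Euler--Lagrange interpolation gives that the dual basis $\{\beta_0,\dots,\beta_{d-1}\}$ of $\{1,\alpha,\dots,\alpha^{d-1}\}$ is obtained by expanding $\dfrac{f(x)}{(x-\alpha)\,f'(\alpha)}=\sum_{i=0}^{d-1}\beta_i x^i$, so that $\Z[\alpha]^{\vee}=\bigoplus_i\Z\beta_i=\dfrac1{f'(\alpha)}\Z[\alpha]$ and multiplication by $f'(\alpha)$ identifies $G:=\Z[\alpha]^{\vee}/\Z[\alpha]$ with $\Z[\alpha]/(f'(\alpha))$. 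The order of $\overline{\beta_i}$ in $G$ equals the reduced-form denominator of $\beta_i$, and since the $\overline{\beta_i}$ generate $G$, the exponent of $G$ is the least common multiple of those denominators; hence $E(\alpha)=\exp(G)$. Likewise the reduced-form hypothesis forces $\overline\eta$ to have order exactly $L$ in $G$: if $m\eta\in\Z[\alpha]$ then $L\mid m\,(l_0,\dots,l_{d-1})$, and $(l_0,\dots,l_{d-1},L)=1$ gives $L\mid m$. Therefore $L=\operatorname{ord}_G(\overline\eta)$ divides $\exp(G)=E(\alpha)$, which is already half of the claim.

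It remains to upgrade $L\mid E(\alpha)$ to $L\in\{1,E(\alpha)\}$, i.e. to show every nonzero element of $G$ has order exactly $\exp(G)$, and this is the heart of the proof. I would work one prime at a time. If $p\mid L$, then $(l_0,\dots,l_{d-1},L)=1$ forces $L\eta\not\equiv 0$ in $\Z[\alpha]/p=\mathbb F_p[x]/(f)$ while $L\eta\cdot f'(\alpha)\equiv 0$ there, so $f'$ is a zero-divisor modulo $p$ -- equivalently $p\mid\operatorname{disc}(f)$ -- and $L\eta$ lies in the annihilator of $f'\bmod p$, which for $p>d$ is the ideal generated by the radical of $f\bmod p$. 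At this point the Salem hypothesis enters: $d$ is even, $f$ is reciprocal, and $\alpha\mapsto\alpha^{-1}$ (the restriction of $\sigma_0$) is an automorphism of $\Q(\alpha)$; writing $f(x)=x^{d/2}R(x+x^{-1})$ with $R$ monic of degree $d/2$ one gets $\operatorname{disc}(f)=\operatorname{disc}(R)^2\,f(1)f(-1)$, and the factorization of $f\bmod p$ is a "mirrored" copy of that of $R\bmod p$ together with the two degenerate fibres over $x+x^{-1}=\pm2$. The plan is to exploit this mirroring to control the $p$-primary part of $G\cong\Z[\alpha]/(f'(\alpha))$ at every ramified prime, and thereby to conclude that a primitive element of $\Z[\alpha]$ can be annihilated modulo $L$ by $f'(\alpha)$ only when $L$ is $1$ or all of $E(\alpha)$. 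The main obstacle is precisely this last step -- controlling the $p$-primary parts of $G$ simultaneously for all $p\mid\operatorname{disc}(f)$ and ruling out any "partial" cancellation -- which is where the reciprocity of $f$ must do the work and where the finitely many small (possibly wildly ramified) primes $p\le d$ have to be analysed separately.
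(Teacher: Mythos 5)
Your first two paragraphs are correct, and they are essentially a cleaner, coordinate-free version of what the paper's own argument does: the paper identifies the set of admissible $\eta$'s with the lattice spanned by $Lw_0,\dots,Lw_{d-1}$, i.e.\ with $L\,\Z[\alpha]^{\vee}$, exactly as you do via the codifferent. Your identifications $L=\operatorname{ord}_G(\overline{\eta})$ and $E(\alpha)=\exp(G)$ for $G=\Z[\alpha]^{\vee}/\Z[\alpha]\cong\Z[\alpha]/(f'(\alpha))$ are justified correctly, and they yield $L\mid E(\alpha)$. Note that this is also all that the paper's proof delivers: its argument picks a factor $l>1$ of $L$ that does not divide $E$, and such an $l$ exists only when $L\nmid E$, so the case $1<L\mid E$, $L\neq E$ is not actually excluded there either.

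The step you defer to the last paragraph is not merely the hard part --- it is false, so the sketched program cannot be completed. The statement you would need, that every nonzero element of $G$ has order $\exp(G)$, forces $G$ to be an elementary abelian $p$-group and hence $|G|=|\operatorname{disc}(f)|$ to be a prime power. This already fails for the smallest quartic Salem number, with minimal polynomial $f(x)=x^4-x^3-x^2-x+1$ and $\operatorname{disc}(f)=-507=-3\cdot 13^2$: the group $G$ has order $507$, so by Cauchy's theorem it contains an element of order $3$, and any reduced-form representative $\eta$ of such an element satisfies the hypotheses of the proposition with $L=3$, whereas $E(\alpha)=\exp(G)$ is divisible by $3\cdot 13=39$. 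Consequently the dichotomy $L\in\{1,E(\alpha)\}$ cannot hold in general, no matter how the reciprocity of $f$ is exploited at the ramified primes; the correct conclusion reachable by your argument (and by the paper's) is the divisibility $L\mid E(\alpha)$, which is what you should state and stop at.
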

An important difference with the results in \cite{bufetov2014modulus} is the complex dependence of $r_0>0$ on $\omega$, in both Theorem \ref{theorem1} and \ref{theorem2}. A lower bound is derived explicitly when $\deg(\alpha)=4$, but it does not have a simple expression in terms of the spectral parameter. We will summarize this in Proposition \ref{propr_0}.

The proof of Theorems \ref{theorem1} and \ref{theorem2} is based in the distribution modulo 1 of the sequence $(\omega\alpha^n)_{n\geq0}$. The relation between a modulus of continuity for the spectral measures and the distribution modulo one of such a sequence may be seen from Lemma \ref{lemmaprod} and Proposition \ref{Spectralbound}. S. Akiyama and Y. Tanigawa showed in \cite{akiyama2004salem} that the sequence $(\alpha^n)_{n\geq1}$ is not far from being uniformly distributed modulo 1. We recall this result in Theorem \ref{akiyama} . We are able to prove a similar result for the sequences $(\omega\alpha^n)_{n\geq0}$, for $\omega \in \Q(\alpha)$.
\begin{theorem}
	Denote $J(\delta) = [\delta,1-\delta]$, for $\delta < 1/2$. Let $\alpha$ be a Salem number of degree $d$ and $\eta= \dfrac{1}{L}(l_0 + \dots + l_{d-1}\alpha^{d-1})\in\Q(\alpha)$, with $L\geq1$ and $l_0,\dots,l_{d-1}\in\Z$. Consider $\sigma_0 :\Q(\alpha)\hookrightarrow \C$ the embedding corresponding to $\alpha\mapsto\alpha^{-1}$. Assume $\gcd(l_0,\dots,l_{d-1},L) = 1$. Then there exists an explicit $\delta = \delta(L,\abs{\eta},\abs{\sigma_0(\eta)}) > 0$ such that
	\[
	\lim_{N\to\infty} \dfrac{\#\left\{n\leq N \,\middle|\, \left\{\eta\alpha^n\right\}\in J(\delta)\right\}}{N} \geq 1/2.
	\]
\end{theorem}
This result is easily deduced from Corollary \ref{cor} and from Lemmas \ref{lemma1}, \ref{lemma2}, \ref{lemma3} and \ref{lemma4}.

The rest of the paper is organized as follows. In Section 2 we provide a background material around substitutions, spectral theory of dynamical systems, algebraic number theory and harmonic analysis. In particular, we recall the definition of special trigonometric polynomials used in the proof of Theorems \ref{theorem1} and \ref{theorem2}. In Section 3 we sketch the proof of the two main theorems since it is rather technical in full generality. Section 4 is devoted to the proof of Theorems \ref{theorem1} and \ref{theorem2}. Section 5 is devoted to prove Proposition \ref{lattice}. Finally, in Section 6 we study the nature of $r_0$ appearing in both main results, ending with the proof of Proposition \ref{propr_0}.

\subsection*{Acknowledgements}
The author was financially supported by the ERC project No 647133 (ICHAOS). The author would like to thank Shigeki Akiyama for providing useful observations leading to the final form of Proposition \ref{lattice}. The author would also like to thank Lorenzo Sadun for his observations about the spectrum of generic tile length substitutions, and Carlos Matheus and Boris Solomyak for carefully reading the first version of the manuscript. The author would like also to thank S\'ebastien Gou\"ezel and the anonymus referee for the numerous commentaries which lead to improve the presentation. Finally, the author would like to thank Pascal Hubert and Alexander Bufetov for the numerous discussions around this problem and constant encouragement. 

\section{Background}
\subsection{Dynamical systems arising from substitutions}
The basic notions of substitutions may be found in \cite{queffelec2010substitution,Fogg} with more detail. Let us start by fixing a positive even integer $d\geq 4$ and a finite alphabet $\A = \left\{1,\dots,d\right\}$. A \textit{substitution} on the alphabet $\A$ is a map $\zeta : \A \longrightarrow \A^+$, where $\A^+$ denote the set of finite (nonempty) words on $\A$. By concatenation, it is natural to extend a substitution to $\A^+$, to $\A^\N$ (one-sided sequences) or $\A^\Z$ (two-sided sequences). In particular, the iterates $\zeta^n(a) = \zeta(\zeta^{n-1}(a))$ for $a \in \A$, are well defined.
\begin{example}[see \cite{holton1998geometric}]\label{example}
	Let $\A = \left\{1,2,3,4\right\}$ and define $\zeta$ by
	\begin{align*}
	&\zeta(1) = 12, \quad \zeta(3) = 2,\\
	&\zeta(2) = 14, \quad \zeta(4) = 3.
	\end{align*}
\end{example}
For a word $w\in\A^+$ denote its length by $\abs{w}$ and by $\abs{w}_a$ the number of symbols $a$ found in $w$. The \textit{substitution matrix} associated to a substitution $\zeta$ is the $d\times d$ matrix with integer entries defined by $M_{\zeta}(a,b) = \abs{\zeta(b)}_a$. A substitution is called \textit{primitive} if its substitution matrix is primitive.
\begin{example}
	Let $\zeta$ be the substitution defined in Example 2.1. Then its substitution matrix is 
	\[
	M_{\zeta}=\begin{pmatrix}1&1&0&0\\1&0&1&0\\0&0&0&1\\0&1&0&0\end{pmatrix},
	\]
	and this substitution is primitive.
\end{example}
The \textit{substitution subshift} associated to $\zeta$ is the set $\mathfrak{X}_\zeta$ of sequences $(x_n)_{n\in\Z} \in \A^\Z$ such that for every $i\in\Z$ and $k\in\N$ exist $a \in \A$ and $n\in\N$ such that $x_i\dots x_{i+k}$ is a subword of some $\zeta^n(a)$. A classical result is that the $\Z$-action by the \textit{left-shift} $T((x_n)_{n\in\Z}) = (x_{n+1})_{n\in\Z}$ on this subshift is minimal and uniquely ergodic when $\zeta$ is primitive. From now on we only consider primitive and \textit{aperiodic} substitutions, which means in the primitive case that the subshift is not finite.

Now we turn to the continuous counterpart of the substitution subshift. The study of the discrete part of the spectrum of the suspensions over substitutions (not only the self-similar case) is done in the article \cite{ClarkSadun}. For a primitive substitution $\zeta$, denote by $\vec{p}=(p_a)_{a\in\A}$ a positive Perron-Frobenius left-eigenvector of $M_\zeta$. For convenience, we will normalize the vector $\vec{p}$ in such a way that its components belong to $\Z[\alpha]$ (it is enough to multiply for some positive integer). This allows us to assume the constant $\kappa$ appearing in Lemma \ref{lemmaprod} belongs to $\Z[\alpha]$ (see the remark below Lemma \ref{lemmaprod}). Set $F: \X_\zeta \times \R \longrightarrow \X_\zeta \times \R$ defined by $F(x,t) = (T(x),t-p_{x_0})$. 
\begin{definition}\label{suspension}
	The \textit{self-similar suspension flow} is the pair $(\mathfrak{X}^{\vec{p}}_{\zeta},(h_t)_{t\in\R})$ given by
	\begin{align*}
	\mathfrak{X}^{\vec{p}}_{\zeta} &= (\X_\zeta \times \R) /\sim,\\
	h_t(x,t') &= (x,t'+t) \quad (\text{mod }\sim),
	\end{align*}
	where $\sim$ is the equivalence relation defined by $(x,t) \sim (x',t')$ if and only if $F^n(x,t) = (x',t')$, for some $n \in \Z$. 
\end{definition}
We will identify $\mathfrak{X}^{\vec{p}}_{\zeta}$ with a \textit{fundamental domain} (see for example \cite{Viana}, Chapter 3). We will take as fundamental domain 
\[
\mathcal{D} = \left\{ (x,t)\in  \mathfrak{X}^{\vec{p}}_{\zeta} \times \R \,\mid \, 0\leq t < f(x) \right\}.
\] 
We will make no further reference to the fundamental domain, and we will simply denote it by $\mathfrak{X}^{\vec{p}}_{\zeta}$. We may decompose
\[
\mathfrak{X}^{\vec{p}}_{\zeta} = \bigcup_{a\in\A} \mathfrak{X}^{\vec{p}}_a, \quad \mathfrak{X}^{\vec{p}}_a = \left\{ (x,t)\in\mathfrak{X}^{\vec{p}}_{\zeta}\: \middle| \: x_0 = a \right\}.
\]

Once again, this flow is uniquely ergodic and the only (Borel) measure invariant for the flow $(h_t)_{t\in\R}$ will be denoted by $\mu$. Our results will concern the spectral measures (we recall its definition in the next subsection) associated to the indicator functions of this partition (in measure), i.e., $f = \mathds{1}_{{\mathfrak{X}^{\vec{p}}_a}}$, for each $a\in\A$.
\subsection{Spectral theory}
We define the main objects of study of this work, namely, the spectral measures associated to the self-similar suspension flow. We restrict ourselves to stating a dynamical version of the spectral theorem taken from \cite{KTSpectral}. A more extensive introduction may also be found in \cite{queffelec2010substitution,Fogg}.
\begin{theorem}\label{spectralthm}
	Let $f,g \in L^2(\mathfrak{X}^{\vec{p}}_{\zeta},\mu)$. There exists a complex measure $\nu_{f,g}$ with support in $\R$, called \textbf{spectral measure}, such that for all $t\in \R$
	\[
	\widehat{\nu_{f,g}}(-t) := \int_{\mathbb{\R}}e^{2\pi it\omega}d\nu_{f,g}(\omega) = \langle U^tf, g \rangle_{L^2(\mathfrak{X}^{\vec{p}}_{\zeta},\mu)},
	\]
	where $U^tf(y) = f(h_t(y))$ for $y \in \mathfrak{X}^{\vec{p}}_{\zeta}$.
\end{theorem}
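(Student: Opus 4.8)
The plan is to realise $\mu_{f,g}$ as the Fourier transform of a measure produced by Bochner's theorem applied to the Koopman representation of the flow. First I would record that the suspension measure $\mu$ is $(h_t)$-invariant, so the Koopman operators $U^tf=f\circ h_t$ form a one-parameter group of unitaries on $L^2(\mathfrak{X}^{\vec{p}}_{\zeta},\mathcal{B}(\mathfrak{X}^{\vec{p}}_{\zeta}),\mu)$: $U^{s+t}=U^sU^t$, $U^0=\mathrm{Id}$, and $(U^t)^{-1}=(U^t)^{\ast}=U^{-t}$. This group is strongly continuous: since $\mathfrak{X}^{\vec{p}}_{\zeta}$ is compact and the flow is jointly continuous, $U^tg\to g$ uniformly (hence in $L^2$) as $t\to 0$ for every $g\in C(\mathfrak{X}^{\vec{p}}_{\zeta})$, and an $\epsilon/3$ argument using density of $C(\mathfrak{X}^{\vec{p}}_{\zeta})$ in $L^2$ together with $\norm{U^t}=1$ extends this to all $f\in L^2$. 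In particular $t\mapsto\langle U^tf,g\rangle$ is continuous for every $f,g$.

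Fix $f\in L^2$ and set $\phi_f(t):=\langle U^tf,f\rangle$. Then $\phi_f$ is continuous by the above, and positive-definite: for $t_1,\dots,t_N\in\R$ and $c_1,\dots,c_N\in\C$,
\begin{align*}
\sum_{j,k}c_j\overline{c_k}\,\phi_f(t_j-t_k) &= \sum_{j,k}c_j\overline{c_k}\,\langle U^{t_j}(U^{t_k})^{\ast}f,f\rangle = \sum_{j,k}c_j\overline{c_k}\,\langle U^{-t_k}f,U^{-t_j}f\rangle \\
&= \Bigl\lVert\,\sum_k\overline{c_k}\,U^{-t_k}f\,\Bigr\rVert^{2}\ \ge\ 0.
\end{align*}
By Bochner's theorem there is a finite positive Borel measure on $\R$, which we name $\mu_{f,f}$, with $\int_{\R}e^{2\pi it\omega}\,d\mu_{f,f}(\omega)=\phi_f(t)$ for all $t\in\R$; Bochner's theorem furnishes such a measure up to the reflection $\omega\mapsto-\omega$, which we absorb into the definition of $\mu_{f,f}$. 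Note $\mu_{f,f}(\R)=\phi_f(0)=\norm{f}^2$.

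For general $f,g\in L^2$ I would define $\mu_{f,g}$ by polarization,
\[
\mu_{f,g}:=\frac14\sum_{k=0}^{3}i^{k}\,\mu_{f+i^{k}g,\,f+i^{k}g},
\]
which is a complex Borel measure on $\R$ of finite total variation, being a finite linear combination of finite positive measures. Applying the polarization identity to the sesquilinear form $(x,y)\mapsto\langle U^tx,y\rangle$, and the previous step to each diagonal term, gives
\begin{align*}
\widehat{\mu_{f,g}}(t) &= \frac14\sum_{k=0}^{3}i^{k}\,\widehat{\mu_{f+i^{k}g,\,f+i^{k}g}}(t) = \frac14\sum_{k=0}^{3}i^{k}\,\langle U^{t}(f+i^{k}g),\,f+i^{k}g\rangle \\
&= \langle U^tf,g\rangle,
\end{align*}
which is the asserted identity; the support of $\mu_{f,g}$ lies in $\R$ by construction. (Equivalently, one may quote Stone's theorem to get a projection-valued measure $E$ on $\R$ with $U^t=\int_{\R}e^{2\pi it\omega}\,dE(\omega)$ and set $\mu_{f,g}(\,\cdot\,):=\langle E(\,\cdot\,)f,g\rangle$, which has the bonus of realising every $\mu_{f,g}$ from a single spectral resolution.)

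The one step that is not pure formalism is the strong continuity of $(U^t)_{t\in\R}$, which is precisely where the continuity of the suspension flow on the compact space $\mathfrak{X}^{\vec{p}}_{\zeta}$ is used; the remainder is the standard Bochner-and-polarization (or Stone) packaging, so I do not anticipate a genuine obstacle — which is why the statement is simply quoted from \cite{KTSpectral} in the paper.
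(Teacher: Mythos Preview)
Your argument is correct: the Bochner-plus-polarization route (with the Stone-theorem alternative you mention) is the standard way to produce the spectral measures, and your verification of strong continuity and positive-definiteness is clean. The paper does not give its own proof of this statement at all---it is quoted as background from \cite{KTSpectral}, exactly as you anticipated in your final remark---so there is no in-paper argument to compare against.
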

We will denote $\nu_{f,f}$ by $\nu_f$, and for $f = \mathds{1}_{{\mathfrak{X}^{\vec{p}}_a}}$ we will write $\nu_{f}=\nu_a$. The measure $\nu_a$ will be called the \textit{correlation measure} associated to $a$. To study the asymptotics of spectral measures, we look into a special kind of Birkhoff integral.
\begin{definition}
	Let $f \in L^2(\mathfrak{X}^{\vec{p}}_{\zeta},\mu)$, $(x,s)\in \mathfrak{X}^{\vec{p}}_{\zeta}$, $\omega\in\R$, $R>0$. The \textit{twisted Birkhoff integral} associated to $f$ is defined by 
	\[
	S^f_R((x,s),\omega) = \int_0^R e^{-2\pi i \omega t} f(h_t(x,s)) dt.
	\]
\end{definition}
The relation between these two concepts is clarified by the next proposition.
\begin{prop}[\cite{bufetov2014modulus}, Lemma 4.3]\label{Spectralbound}
	Denote $G_R(f,\omega) = \dfrac{1}{R}\norm{S^f_R(\cdot,\omega)}_{L^2}^2$. Suppose there exists $0<\gamma<1$ such that $G_R(f,\omega) \leq CR^{2\gamma-1}$, for some constant $C>0$ and $R\geq R_0$. Then there exists $r_0>0$ (depending only on $R_0$) such that for every $0<r\leq r_0$ holds
	\[
	\nu_f([\omega-r,\omega+r]) \leq \pi^2Cr^{2(1-\gamma)}.
	\]
\end{prop}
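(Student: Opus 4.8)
The plan is to convert the hypothesis on $G_R(f,\omega)$ into a lower bound for $\mu_f$ on small intervals, using the spectral theorem to rewrite $\norm{S^f_R(\cdot,\omega)}_{L^2}^2$ as an integral against $\mu_f$ of a Fej\'er-type kernel, and then exploiting a pointwise lower bound for that kernel near $\omega$. First I would write $S^f_R((x,s),\omega) = \int_0^R e^{-2\pi i\omega t}(U^tf)(x,s)\,dt$ and expand
\[
\norm{S^f_R(\cdot,\omega)}_{L^2}^2 = \int_0^R\!\int_0^R e^{-2\pi i\omega(t-\tau)}\,\langle U^tf,\,U^\tau f\rangle\,dt\,d\tau .
\]
Since $(U^t)$ is a one-parameter unitary group, $\langle U^tf,U^\tau f\rangle = \langle U^{t-\tau}f,f\rangle = \widehat{\mu_f}(t-\tau)$ by Theorem \ref{spectralthm} applied with $g=f$. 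Substituting the integral representation of $\widehat{\mu_f}$ and interchanging the order of integration (Fubini, justified since $\mu_f$ is a finite positive measure and the integrand is bounded) yields
\[
\norm{S^f_R(\cdot,\omega)}_{L^2}^2 = \int_{\R}\abs{D_R(\omega'-\omega)}^2\,d\mu_f(\omega'), \qquad D_R(s) := \int_0^R e^{2\pi its}\,dt = \frac{e^{2\pi iRs}-1}{2\pi is},
\]
so that $\abs{D_R(s)}^2 = \sin^2(\pi Rs)/(\pi^2 s^2) = R^2\bigl(\sin(\pi Rs)/(\pi Rs)\bigr)^2$.

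Next I would use the elementary inequality $\sin(u)/u \ge 2/\pi$ for $\abs{u}\le \pi/2$: when $\abs{s}\le 1/(2R)$ we have $\abs{\pi Rs}\le \pi/2$, hence $\abs{D_R(s)}^2 \ge 4R^2/\pi^2$. Since $\mu_f$ is a positive measure, restricting the integral above to $\omega'\in[\omega-\tfrac{1}{2R},\omega+\tfrac{1}{2R}]$ gives
\[
\norm{S^f_R(\cdot,\omega)}_{L^2}^2 \;\ge\; \frac{4R^2}{\pi^2}\,\mu_f\!\left(\left[\omega-\tfrac{1}{2R},\,\omega+\tfrac{1}{2R}\right]\right),
\]
and therefore $G_R(f,\omega) = \tfrac1R\norm{S^f_R(\cdot,\omega)}_{L^2}^2 \ge \tfrac{4R}{\pi^2}\,\mu_f\!\left(\left[\omega-\tfrac{1}{2R},\omega+\tfrac{1}{2R}\right]\right)$ for every $R>0$.

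Finally I would combine this with the assumption $G_R(f,\omega)\le CR^{2\gamma-1}$ valid for $R\ge R_0$, obtaining
\[
\mu_f\!\left(\left[\omega-\tfrac{1}{2R},\,\omega+\tfrac{1}{2R}\right]\right) \;\le\; \frac{\pi^2 C}{4}\,R^{2(\gamma-1)}, \qquad R\ge R_0,
\]
and then set $r = 1/(2R)$, so that $r$ ranges over $(0,r_0]$ with $r_0 := 1/(2R_0)$, which depends only on $R_0$. Substituting $R = 1/(2r)$ and using $R^{2(\gamma-1)} = 4\cdot 4^{-\gamma} r^{2(1-\gamma)}$ together with $4^{-\gamma}<1$ (valid since $0<\gamma<1$) gives $\mu_f([\omega-r,\omega+r]) \le \pi^2 C\,4^{-\gamma} r^{2(1-\gamma)} \le \pi^2 C r^{2(1-\gamma)}$, as claimed. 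There is no real obstacle here beyond the bookkeeping: the one genuinely substantive step is the spectral identity expressing $\norm{S^f_R}_{L^2}^2$ as $\int_{\R}\abs{D_R(\omega'-\omega)}^2 d\mu_f$, after which everything reduces to the standard lower bound on the Fej\'er kernel and an elementary change of variables.
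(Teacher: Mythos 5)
Your proof is correct and is essentially the standard argument from \cite{BS} (the paper itself only cites the result without proof): the spectral identity $\norm{S^f_R(\cdot,\omega)}_{L^2}^2=\int_{\R}\abs{D_R(\omega'-\omega)}^2\,d\mu_f(\omega')$, the Jordan-type bound $\sin(u)/u\geq 2/\pi$ on $\abs{u}\leq\pi/2$, and the substitution $r=1/(2R)$ are exactly the steps used there. All the computations check out, including the final constant $\pi^2C\,4^{-\gamma}\leq\pi^2C$.
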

Finally, the problem of finding bounds for the twisted Birkhoff sums may be addressed solving a problem on Diophantine approximation, according to the next lemma.  We denote the distance of $x\in\R$ to the nearest integer by $\norm{x}_{_{\R/\Z}} = \min(\left\{x\right\},1-\left\{x\right\})$ , where as usual $\left\{x\right\} = x-\floor{x}$ denotes the fractional part of $x$.
\begin{lemma}[\cite{bufetov2014modulus}, Proposition 4.4]\label{lemmaprod}
	Let $\zeta$ be a primitive substitution (with $\alpha$ being its Perron-Frobenius eigenvalue and $\vec{p}$ the eigenvector normalized as before) and $\mathfrak{X}^{\vec{p}}_{\zeta}$ the corresponding self-similar suspension flow. Let $a\in\A$ and $f$ be the indicator function of $\mathfrak{X}^{\vec{p}}_a$. Then there exist $\lambda \in (0, 1)$, $C_1>0$ and $\kappa \in \Z[\alpha]$ an explicit positive constant, all depending only on the substitution $\zeta$, such that
	\[
	\abs{S^f_R((x,s),\omega)} \leq C_1R\prod_{n=0}^{\floor{\log_{\alpha}(R)}} (1-\lambda\norm{\omega \kappa\:\alpha^n}^2_{_{\R/\Z}}),
	\]
	for all $R>0$, $(x,s) \in \mathfrak{X}^{\vec{p}}_{\zeta}$ and $\omega\in\R$.
\end{lemma}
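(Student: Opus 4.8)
The plan is to turn the twisted Birkhoff integral into an exponential sum over the itinerary of the orbit segment, use the self-similarity of $\zeta$ to factor that sum as a product of roughly $\log_\alpha R$ matrices built from $M_\zeta$, and then estimate each factor in an operator norm adapted to the Perron eigenvector, the gain at scale $n$ coming from an incomplete--cancellation bound for trigonometric polynomials recorded through $\norm{\omega\kappa\alpha^n}_{\R/\Z}$. Concretely, fix $a\in\A$, $f=\chi_{\mathfrak{X}_a}$ and $(x,s)\in\mathfrak{X}^{\vec{p}}_{\zeta}$. As $t$ grows the orbit of $(x,s)$ runs through the tiles indexed by the symbols of $x$, the tile of $b$ having height $p_b$; writing $W=W_1\cdots W_m$ for the word of symbols of the full tiles met in $[0,R]$ and $\ell(u)=\sum_b\abs{u}_b\,p_b$ for the suspension length of a word $u$, a direct computation gives
\[
S^f_R((x,s),\omega)=g_a(\omega)\,e^{-2\pi i\omega\tau_0}\sum_{k:\,W_k=a}e^{-2\pi i\omega\,\ell(W_1\cdots W_{k-1})}+O(1),\qquad g_a(\omega)=\frac{1-e^{-2\pi i\omega p_a}}{2\pi i\omega},
\]
where the $O(1)$ accounts for the partial first and last tiles (bounded since $\sum_b p_b=1$) and $\abs{g_a(\omega)}\le p_a\le1$. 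Denote the sum on the right by $\Phi_a(W,\omega)$; everything reduces to bounding it.

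For the self-similar structure, put $N$ slightly larger than $\log_\alpha R-C_2$. By recognizability of the primitive aperiodic substitution $\zeta$, the sequence $x$ is a concatenation of blocks $\zeta^N(b)$, $b\in\A$; since each such block has suspension length $\alpha^N p_b\ge p_{\min}\alpha^N$ (here $\vec p M_\zeta=\alpha\vec p$ enters) while $R\lesssim\alpha^{N+C_2}$, the word $W$ equals a suffix of some $\zeta^N(b_0)$, followed by at most $K=K(\zeta)$ full blocks $\zeta^N(b_1),\dots,\zeta^N(b_{m'})$, followed by a prefix of some $\zeta^N(b_{m'+1})$. For a full block, $\Phi_a(\zeta^N(b),\omega)$ is the $(a,b)$--entry of the matrix $\Psi_N(\omega):=\bigl(\Phi_a(\zeta^N(b),\omega)\bigr)_{a,b\in\A}$; splitting $\zeta^N(b)=\zeta(\zeta^{N-1}(b))$ along the letters of $\zeta^{N-1}(b)$ and using $\ell(\zeta(u))=\alpha\ell(u)$ gives the recursion
\[
\Psi_N(\omega)=\Psi_1(\omega)\,\Psi_{N-1}(\alpha\omega),\qquad\text{so}\qquad\Psi_N(\omega)=\prod_{n=0}^{N-1}\Psi_1(\alpha^n\omega),\quad\Psi_1(0)=M_\zeta.
\]
Expanding the prefix and suffix of $W$ one scale at a time contributes at most $\max_b\abs{\zeta(b)}$ entries of each $\Psi_n(\omega)$, $0\le n\le N$, so $\abs{\Phi_a(W,\omega)}\le C\sum_{n=0}^{N}\norm{\Psi_n(\omega)}$.

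Now the factor estimate. Let $\vec r$ be the right Perron eigenvector of $M_\zeta$ and $\norm{v}_*:=\max_a\abs{v_a}/r_a$; then $\norm{M_\zeta}_*=\alpha$, attained only in the direction $\vec r$, and since $\abs{\Psi_1(t)}\le M_\zeta$ entrywise one gets $\norm{\Psi_1(t)}_*\le\alpha$ for all $t$. Writing $\vec p=\vec v/s$ with $\vec v\in\Z[\alpha]^d$ and $s\in\Z[\alpha]$, every prefix length $\ell(W_1\cdots W_{k-1})$ occurring in $\Phi_a$ equals $w/s$ with $w\in\Z[\alpha]$; with $\kappa:=1/s\in\Q(\alpha)$ the frequencies of the trigonometric polynomial $\Psi_1(\alpha^n\omega)_{a,b}$ are $\Z[\alpha]$--combinations of $\kappa\alpha^n\omega$, hence all lie near $\Z$ exactly when $\norm{\omega\kappa\alpha^{n+i}}_{\R/\Z}$ is small for $0\le i<d$. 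A uniform incomplete--cancellation estimate for trigonometric polynomials, together with primitivity of $M_\zeta$ — over a window of $O(1)$ consecutive scales a deficit in a single entry of some $\Psi_1(\alpha^n\omega)$ produces a uniform deficit of the $\norm{\cdot}_*$--norm of the corresponding sub--product — yields $\lambda\in(0,1)$ small and $C_2>0$ such that
\[
\norm{\Psi_N(\omega)}_*\le\prod_{n=0}^{N-1}\norm{\Psi_1(\alpha^n\omega)}_*\le\alpha^{N}\prod_{n=0}^{N-1}\bigl(1-\lambda\norm{\omega\kappa\alpha^{n}}^2_{\R/\Z}\bigr).
\]
Since $\lambda$ is small the partial products $\alpha^{n}\prod_{m<n}(1-\lambda\norm{\omega\kappa\alpha^m}^2_{\R/\Z})$ grow geometrically in $n$, so the $O(1)$ boundary terms and the $O(N)$ prefix/suffix terms are each dominated by a fixed multiple of the $n=N$ one; combining this with $\abs{g_a(\omega)}\le1$, with $\alpha^{N}\le R$, and with the fact that shortening the range of the product to $0\le n\le\floor{\log_\alpha R-C_2}$ only enlarges it, one obtains the stated inequality (for small $R$ the product is empty and $\abs{S^f_R}\le R$ suffices).

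The main obstacle is the factor estimate just used: one must prove a genuinely \emph{quantitative} statement that failure of cancellation in $\Psi_1(t)_{a,b}$ forces \emph{all} of its frequencies to be close to $\Z$, uniformly in $a,b$ and with the deficit tied to $\norm{\omega\kappa\alpha^n}_{\R/\Z}$ — this is precisely where the explicit $\kappa$ and the constant $C_2$ are fixed, via bookkeeping of the $\Z[\alpha]$--coefficients of the frequencies — and then upgrade this entrywise information into a strict contraction of the operator norm $\norm{\cdot}_*$ of a bounded product of the matrices $\Psi_1(\alpha^n\omega)$, which is the step genuinely using primitivity of $\zeta$. The remaining ingredients (arbitrary base point $(x,s)$, arbitrary $R$, boundary terms) are routine once the self-similar structure of the subshift is invoked.
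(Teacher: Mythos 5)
The paper does not actually prove this lemma: it is imported verbatim from \cite{BS}, so the only thing to compare your attempt with is the Bufetov--Solomyak argument. Your architecture --- rewrite $S^f_R$ as an exponential sum over tile endpoints, decompose the orbit word hierarchically into blocks $\zeta^N(b)$, encode the block sums in a matrix cocycle $\Psi_N(\omega)=\prod_{n<N}\Psi_1(\alpha^n\omega)$ with $\abs{\Psi_1(t)_{a,b}}\le M_\zeta(a,b)$, and extract one cancellation factor per scale --- is indeed the shape of their proof, and the boundary/prefix/suffix bookkeeping you describe is routine.

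However, the decisive step is missing, and you flag it yourself as ``the main obstacle.'' Two concrete problems. First, your choice of $\kappa$: taking $\kappa=1/s$ with $\vec p=\vec v/s$ makes the frequencies of $\Psi_1(\alpha^n\omega)$ equal to $\omega\kappa\alpha^n$ times assorted elements of $\Z[\alpha]$, and ``no cancellation in any entry'' then only forces smallness of $\norm{\omega\kappa\alpha^n z}_{\R/\Z}$ for the particular $z\in\Z[\alpha]$ occurring as prefix-length differences; converting that into smallness of the single quantity $\norm{\omega\kappa\alpha^{n}}_{\R/\Z}$ appearing in the statement is exactly the Diophantine bookkeeping you do not carry out. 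In \cite{BS} this is sidestepped by choosing, via primitivity, a letter $a$ and a return word $v$ such that $ava$ occurs in $\zeta^{k_0}(b)$ for every $b$, and setting $\kappa=\ell(v)=\langle\vec p,\vec k\rangle$ with $\vec k$ the population vector of $v$; then $\ell(\zeta^n(v))=\kappa\alpha^n$ exactly, the two resulting occurrences of $\zeta^n(a)$ inside each $\zeta^{n+k_0}(b)$ have phase difference exactly $\omega\kappa\alpha^n$, and the elementary inequality $\abs{e^{i\theta}+e^{i\theta'}}\le 2(1-c\norm{(\theta-\theta')/2\pi}^2_{\R/\Z})$ produces the factor $1-\lambda\norm{\omega\kappa\alpha^n}^2_{\R/\Z}$ at every scale. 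Second, the displayed factor-by-factor bound $\norm{\Psi_1(\alpha^n\omega)}_*\le\alpha(1-\lambda\norm{\omega\kappa\alpha^n}^2_{\R/\Z})$ is false as stated: if some column $b$ of $M_\zeta$ has a single nonzero entry (as in Example \ref{example}), the corresponding entries of $\Psi_1$ are single exponentials of modulus one and that column exhibits no cancellation, so an individual factor need not contract. One must work with windows of boundedly many consecutive factors and show that an entrywise deficit somewhere in the window forces a deficit of the operator norm of the entire window product --- the genuinely quantitative use of primitivity --- and also verify that every scale $n\le\log_\alpha R-C_2$ contributes a pair of occurrences inside the orbit segment. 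These are the content of the lemma; as it stands your proposal is a correct outline of the \cite{BS} strategy with its core estimate left open.
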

\begin{remark}\label{remarkKappa}
	As we remarked before, if $\vec{p}\in(\Z[\alpha])^d$, then we may assume $\kappa \in \Z[\alpha]$, since by definition $\kappa = \langle \text{Ab}(w),\vec{p}\rangle$, where $w$ is an appropriate \textit{return word} and $\text{Ab}(w)$ its \textit{population vector} indexed by $\A$ with components $(\text{Ab}(w))_a = \abs{w}_a$. See \cite{bufetov2014modulus} for more details.
\end{remark}

\subsection{Salem numbers}
Recall the definition of a \textit{Salem number}: a real algebraic integer  greater than 1 having all its Galois conjugates inside the closed unit disk, with at least one conjugate on the unit circle. This definition actually forces that the inverse is a conjugate, and the rest of them are on the unit circle. For a survey on Salem numbers see \cite{smyth2015seventy}. We will say a primitive substitution is of \textit{Salem type} if the dominant eigenvalue is a Salem number. An example of substitution of Salem type is the one of Example \ref{example}.

We now state some results regarding Salem numbers.
\begin{prop}[see \cite{bugeaud2012distribution}, Theorem 3.9]\label{SalemProp}
	Let $\alpha$ be a Salem number and $\epsilon>0$, then there exists $\eta = \eta(\epsilon) \in \Q(\alpha)$ different from zero such that 
	\[
	\norm{\eta\alpha^n}_{_{\R/\Z}} < \epsilon,
	\]
	for all $n\geq0$.
\end{prop}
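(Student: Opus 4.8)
The plan is to reduce the statement to producing a single nonzero element of $\Z[\alpha]$ all of whose Galois conjugates, apart from itself, are extremely small, and then to exploit that for a Salem number every conjugate other than $\alpha$ lies in the closed unit disk.

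First I would fix notation: let $d=\deg(\alpha)$ (even, $\geq 4$) and write the conjugates of $\alpha$ as $\alpha,\ \alpha^{-1}$ and $\omega_1,\overline{\omega_1},\dots,\omega_r,\overline{\omega_r}$ with $r=(d-2)/2$ and $\abs{\omega_j}=1$; none of the latter is real, so $K:=\Q(\alpha)$ has signature $(2,r)$, its real embeddings being the identity and $\sigma_0\colon\alpha\mapsto\alpha^{-1}$, with $\sigma_1,\dots,\sigma_r$ representing the complex places. The elementary input is that for any $\beta\in\mathcal{O}_K$ the rational integer $\mathrm{Tr}_{K/\Q}(\beta)$ equals $\beta+\sigma_0(\beta)+2\sum_{j=1}^r\Re\,\sigma_j(\beta)$, whence $\norm{\beta}_{\R/\Z}\le\abs{\sigma_0(\beta)}+2\sum_{j=1}^r\abs{\sigma_j(\beta)}$. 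Applying this to $\beta=\eta\alpha^n$ with $\eta\in\Z[\alpha]\subseteq\mathcal{O}_K$ and $n\ge 0$, and using $\abs{\sigma_0(\alpha)}=\alpha^{-1}<1$ and $\abs{\sigma_j(\alpha)}=\abs{\omega_j}=1$, the right-hand side is bounded by $\abs{\sigma_0(\eta)}+2\sum_{j=1}^r\abs{\sigma_j(\eta)}$ uniformly in $n$. Thus it suffices to find $\eta\in\Z[\alpha]\setminus\{0\}$ with $\abs{\sigma_0(\eta)}<\epsilon/d$ and $\abs{\sigma_j(\eta)}<\epsilon/d$ for every $j$, since then $\norm{\eta\alpha^n}_{\R/\Z}<(d-1)\epsilon/d<\epsilon$ for all $n\ge 0$ at once.

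For the existence of such an $\eta$ I would invoke geometry of numbers. Under the canonical embedding $K\hookrightarrow\R^2\times\C^r\cong\R^d$ the subring $\Z[\alpha]$ is a full-rank lattice (the images of $1,\alpha,\dots,\alpha^{d-1}$ are linearly independent by a Vandermonde argument), and the region cut out by $\abs{\sigma_0(\,\cdot\,)}<\epsilon/d$, $\abs{\sigma_j(\,\cdot\,)}<\epsilon/d$ for $1\le j\le r$, together with a bound $\abs{\,\cdot\,}<M$ on the identity coordinate, is convex, symmetric, and of volume proportional to $M$; taking $M$ large enough that this exceeds $2^d$ times the covolume, Minkowski's convex body theorem supplies the required nonzero lattice point. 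Alternatively one can argue purely by Dirichlet's box principle: the vectors obtained by applying $(\sigma_0,\Re\sigma_1,\Im\sigma_1,\dots,\Re\sigma_r,\Im\sigma_r)$ to $\sum_{i=0}^{d-1}a_i\alpha^i$ with $a_i\in\{0,\dots,N\}$ all lie, because $\abs{\sigma_0(\alpha)}<1$ and $\abs{\omega_j}=1$, inside a box of side $O(N)$ in $\R^{d-1}$, so for $N$ large two of them fall in a common cube of side $\epsilon/(2d)$ and the difference of the corresponding elements of $\Z[\alpha]$ is the desired $\eta$. Either route, combined with the reduction above, finishes the proof.

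The only mildly delicate point is the observation in the second paragraph that, precisely because $\alpha$ is a Salem number, multiplication by $\alpha$ does not enlarge any conjugate except $\sigma_0$ (which it even shrinks) and the identity one (which is irrelevant modulo $\Z$); this is exactly what makes a single $\eta$ work simultaneously for all $n\ge 0$. The construction of $\eta$ itself is entirely standard and presents no real obstacle.
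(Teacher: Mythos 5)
Your proof is correct. The paper itself gives no proof of this proposition (it is quoted from the cited reference of Bugeaud), but your argument --- use integrality of the trace to reduce to finding a nonzero $\eta\in\Z[\alpha]$ whose conjugates other than $\eta$ itself are all small, note that multiplication by $\alpha^n$ never enlarges those conjugates precisely because $\alpha$ is Salem, and then produce such an $\eta$ by Minkowski's theorem or the pigeonhole principle applied to the canonical embedding --- is exactly the standard proof found in that reference.
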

In fact, in \cite{zaimi2012comments} there is a characterization of numbers $\eta\in\R$ such that $\limsup_n \norm{\eta\alpha^n}_{_{\R/\Z}} < \epsilon$, with $\epsilon\leq \delta_1(\alpha) = 1/\mathscr{L}(\alpha)$, where $\mathscr{L}(\alpha)$ denotes the \textit{length} of $\alpha$, i.e., the sum of the absolute values of the coefficients of its minimal polynomial. Proposition \ref{SalemProp} shows the difficulty to find a universal exponent for the spectral measure as in \cite{bufetov2014modulus} in the case of Salem type substitutions, at least by the methods we are using.

A classical result states that the sequence $(\alpha^n)_{n\geq1}$ is dense but not uniformly distributed. In spite of this result,  S. Akiyama and Y. Tanigawa showed in \cite{akiyama2004salem} that the sequence $(\alpha^n)_{n\geq1}$ is not far from being uniformly distributed modulo 1.
\begin{theorem}[\cite{akiyama2004salem}, Theorem 1]\label{akiyama}
	Let $\alpha$ be a Salem number of degree $d = 2m+2 \geq 8$ and $J = [\mathfrak{a},\mathfrak{b}] \subseteq [0,1]$, then
	\[
	\left| \lim_{N\to\infty} \dfrac{\#\left\{n\leq N \,\middle|\, \left\{\alpha^n\right\}\in J\right\}}{N} - (\mathfrak{b}-\mathfrak{a}) \right|
	\leq 
	2\zeta\left(m/2\right)(2\pi)^{-m}(\mathfrak{b}-\mathfrak{a}),
	\]
	where $\zeta$ denotes the Riemann zeta function.
\end{theorem}
For degree $d=4$ and $d=6$ there are similar estimates we omit here. An extension of this work is the heart of the proof of Lemmas \ref{lemma1}, \ref{lemma2}, \ref{lemma3} and \ref{lemma4}, which are the essential steps to prove Theorems \ref{theorem1} and \ref{theorem2}.
\begin{prop}[see \cite{bugeaud2012distribution}, proof of Lemma 3.8]\label{ratind}
	Let $\alpha$ be a Salem number of degree $d=2m+2$ and $e^{2\pi i \theta_{1}},\dots,e^{2\pi i \theta_{m}}$ the conjugates on the upper-half of the unit circle. Then $1,\theta_{1},\dots,\theta_m$ are rationally independent.
\end{prop}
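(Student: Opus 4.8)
The plan is to reduce the claimed $\Q$-linear independence to the non-existence of a multiplicative relation among the unimodular conjugates of $\alpha$, and then rule out any such relation by a short Galois argument. Suppose, towards a contradiction, that $1,\theta_1,\dots,\theta_m$ were rationally dependent. Clearing denominators produces integers $n_0,n_1,\dots,n_m$ with $(n_1,\dots,n_m)\neq 0$ (if all the $n_i$ with $i\ge 1$ vanished we would also get $n_0=0$) and $n_0+n_1\theta_1+\dots+n_m\theta_m=0$. Writing $\beta_i=e^{2\pi i\theta_i}$ and exponentiating, this becomes the relation $\prod_{i=1}^{m}\beta_i^{n_i}=e^{-2\pi i n_0}=1$ among the $m$ conjugates of $\alpha$ lying on the open upper half of the unit circle.

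The key structural input I would set up next is a blocking of the conjugates that is respected by the Galois group. Let $K$ be the splitting field of the minimal polynomial $P$ of $\alpha$ over $\Q$ and $G=\mathrm{Gal}(K/\Q)$. Since $\alpha$ is a Salem number, $P$ is reciprocal, so the root set $\mathcal{C}=\{\alpha,\alpha^{-1},\beta_1,\beta_1^{-1},\dots,\beta_m,\beta_m^{-1}\}$ is stable under $x\mapsto x^{-1}$; moreover $P$ is irreducible of degree $d\ge 4$, so no root equals $\pm 1$, and hence the sets $p_0=\{\alpha,\alpha^{-1}\}$ and $p_i=\{\beta_i,\beta_i^{-1}\}$ ($1\le i\le m$) are genuine $2$-element blocks partitioning $\mathcal{C}$ into $m+1$ pairs. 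Every $g\in G$ permutes $\mathcal{C}$ and commutes with inversion, $g(x^{-1})=g(x)^{-1}$, so $g$ permutes the $m+1$ blocks; and since $P$ is irreducible, $G$ acts transitively on $\mathcal{C}$, hence transitively on the set of blocks.

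With this in hand the contradiction is immediate. Choose $i_0$ with $n_{i_0}\neq 0$, and by transitivity on blocks pick $g\in G$ with $g(p_{i_0})=p_0$, so that $\abs{g(\beta_{i_0})}=\alpha^{\pm 1}$; since $g$ is a bijection of the blocks, $g(p_i)\neq p_0$ for every $i\neq i_0$, whence $g(\beta_i)$ lies on the unit circle and $\abs{g(\beta_i)}=1$. Applying $g$ to $\prod_i\beta_i^{n_i}=1$ and taking absolute values yields $1=\abs{g(\beta_{i_0})}^{n_{i_0}}\prod_{i\neq i_0}\abs{g(\beta_i)}^{n_i}=\alpha^{\pm n_{i_0}}$, which is impossible because $\alpha>1$ and $n_{i_0}\neq 0$. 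I expect the only delicate part of the write-up to be the bookkeeping in the middle step, namely checking that the pairing of $\mathcal{C}$ into blocks is well defined (this uses that inversion has no fixed point among the roots) and is genuinely $G$-equivariant; once that is in place nothing beyond transitivity is needed, and in particular Kronecker's theorem is not required here since the exponentiated relation equals exactly $1$. It is worth noting that the same argument proves slightly more: any choice of one conjugate from each of the $m$ unimodular blocks is multiplicatively independent modulo roots of unity.
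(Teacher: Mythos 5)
Your argument is correct: the reduction of a rational relation among $1,\theta_1,\dots,\theta_m$ to a multiplicative relation $\prod_i\beta_i^{n_i}=1$, followed by the observation that a Galois element sending one unimodular pair $\{\beta_{i_0},\beta_{i_0}^{-1}\}$ to $\{\alpha,\alpha^{-1}\}$ must send every other pair to a unimodular pair, so that taking absolute values forces $n_{i_0}=0$, is exactly the classical proof (it goes back to Salem's \emph{Algebraic Numbers and Fourier Analysis}). The paper states Proposition \ref{ratind} without proof, treating it as a known fact, so there is nothing to compare against; your write-up supplies the standard argument and is complete, including the two points that actually need checking (that inversion pairs the roots into $m+1$ disjoint $2$-element blocks because no root is $\pm1$, and that the block partition is $G$-equivariant).
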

Let us recall the notion of \textit{trace} of an algebraic number: for an algebraic number $\eta\in\Q(\alpha)$ define
\[
\text{Tr}(\eta) := \sum_{\sigma:\Q(\alpha)\hookrightarrow\C} \sigma(\eta),
\] 
where the sum runs over all embeddings of $\Q(\alpha)$ in $\C$. In particular, for an algebraic integer $\eta$ we have $\text{Tr}(\eta) \in \Z$.

Finally, we state a classic inequality about evaluation of integer polynomials on algebraic numbers. For $P(X) = a_0 + \dots + a_dX^d \in \Z[X]$ the (naive) \textit{height} of $P$ is defined as Height$(P) = \max(\abs{a_0},\dots,\abs{a_d})$.
\begin{lemma}[\cite{Garsia}, Lemma 1.51]\label{Garsia}
	Let $\xi$ be an algebraic integer and $Q \in \Z[X]$ of degree at most $n\geq 1$ such that $Q(\xi) \neq 0$. Let $\xi_1,\dots,\xi_d$ be the other conjugates of $\xi$ and $m$ the number of $i's$ such that $\abs{\xi_i} = 1$. Then
	\[
	\abs{Q(\xi)} \geq \dfrac{\prod_{\abs{\xi_i} \neq 1} \abs{\abs{\xi_i}-1}}{(n+1)^m \left(\prod_{\abs{\xi_i} > 1} \abs{\xi_i}\right)^{n+1} \textnormal{Height}(Q)^d}.
	\]
\end{lemma}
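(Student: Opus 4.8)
The plan is to run the classical "norm of an integer polynomial at an algebraic integer" argument: multiply $Q(\xi)$ by $Q$ evaluated at every conjugate of $\xi$, recognise the product as a nonzero rational integer (so of absolute value $\geq 1$), and then bound each of the $d$ extra factors $\abs{Q(\xi_i)}$ from above by an elementary geometric-series estimate involving $\abs{\xi_i}$ and $\text{Height}(Q)$.

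First I would let $P\in\Z[X]$ be the minimal polynomial of $\xi$; since $\xi$ is an algebraic integer, $P$ is monic, of degree $d+1$, with roots $\xi,\xi_1,\dots,\xi_d$. Set
\[
N := Q(\xi)\,\prod_{i=1}^{d} Q(\xi_i).
\]
This is a symmetric function, with integer coefficients, of the roots of the monic integer polynomial $P$ (equivalently, $N = \pm\,\mathrm{Res}(P,Q)$), hence $N\in\Z$. Moreover $N\neq 0$: each $\xi_i$ is the image of $\xi$ under a field embedding that fixes $\Q$, hence fixes $Q$ coefficientwise, so $Q(\xi_i)$ is a conjugate of the nonzero number $Q(\xi)$ and is therefore nonzero. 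Thus $\abs N\geq 1$, and rearranging,
\[
\abs{Q(\xi)} \;\geq\; \frac{1}{\prod_{i=1}^{d}\abs{Q(\xi_i)}}.
\]

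Next I would bound $\abs{Q(\xi_i)}$ from above. Writing $Q(X)=\sum_{k=0}^{n}a_kX^k$ with each $\abs{a_k}\leq\text{Height}(Q)$, the triangle inequality gives $\abs{Q(\xi_i)}\leq \text{Height}(Q)\sum_{k=0}^{n}\abs{\xi_i}^k$. If $\abs{\xi_i}=1$ the sum is exactly $n+1$; if $\abs{\xi_i}\neq 1$, summing the geometric series (treating $\abs{\xi_i}>1$ and $\abs{\xi_i}<1$ separately) yields $\sum_{k=0}^{n}\abs{\xi_i}^k\leq \max(1,\abs{\xi_i})^{n+1}\big/\bigl\lvert\,\abs{\xi_i}-1\,\bigr\rvert$. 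Multiplying these bounds over $i=1,\dots,d$: the $m$ unit-modulus conjugates contribute a factor $(n+1)^m$; the conjugates of modulus $>1$ contribute $\bigl(\prod_{\abs{\xi_i}>1}\abs{\xi_i}\bigr)^{n+1}$ (the factor $\max(1,\abs{\xi_i})^{n+1}$ being $1$ when $\abs{\xi_i}<1$); the quantities $\bigl\lvert\,\abs{\xi_i}-1\,\bigr\rvert$ over $\abs{\xi_i}\neq 1$ collect in the denominator; and each of the $d$ conjugates contributes one factor $\text{Height}(Q)$. Substituting this product bound into the previous display gives precisely the claimed inequality.

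The argument is essentially self-contained, and no deep obstacle arises; the only points needing care are the unified geometric-series estimate $\sum_{k\leq n}\abs{\xi_i}^k\leq \max(1,\abs{\xi_i})^{n+1}/\bigl\lvert\,\abs{\xi_i}-1\,\bigr\rvert$, which must be checked in the regimes $\abs{\xi_i}\gtrless 1$, and the bookkeeping of which conjugates land in the numerator versus the denominator. I expect the (very mild) crux to be confirming that $N$ is a genuinely \emph{nonzero} integer: integrality because $P$ is monic with integer coefficients, and nonvanishing because $Q(\xi)\neq 0$ propagates to all conjugates by Galois action.
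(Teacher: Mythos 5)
Your proof is correct, and it is the classical argument behind this separation lemma. The paper itself offers no proof of this statement --- it is quoted verbatim from the cited reference (Garsia's lemma) --- so there is nothing internal to compare against; but your route via the nonvanishing rational integer $N=Q(\xi)\prod_i Q(\xi_i)=\pm\,\mathrm{Res}(P,Q)$, followed by the geometric-series bound $\sum_{k\le n}\abs{\xi_i}^k\le (n+1)$ when $\abs{\xi_i}=1$ and $\le \max(1,\abs{\xi_i})^{n+1}/\bigl\lvert\,\abs{\xi_i}-1\,\bigr\rvert$ otherwise, is exactly the standard derivation and yields the stated inequality with the correct bookkeeping of the $m$ unit-modulus conjugates, the modulus-$>1$ conjugates, and the $d$ factors of $\textnormal{Height}(Q)$.
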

\subsection{Polynomial approximation of functions}\label{polynomials}
One of the main technical tools we use is a family of trigonometric polynomials (Selberg polynomials) which approximate the indicator function $\mathds{1}_{_J}$ of an interval $J\subset [0,1]$. A detailed reference is found in \cite{montgomery1994ten}, Chapter 1. In order to introduce this family, we recall the definition of several other families, starting with the well-known Fejer kernel.
\begin{definition}
	The \textit{Fejer kernel of degree $N-1$} is defined by
	\[
	\Delta_{N}(z) = \sum_{0\leq\abs{k}\leq N-1} \left(1 - \dfrac{\abs{k}}{N}\right)e^{2\pi i k z}
	\]
\end{definition}
\begin{definition}\label{defValeer}
	The \textit{Vaaler polynomial of degree $N$} is defined by
	\[
	\mathcal{V}_{N}(z) = \dfrac{1}{N+1} \sum_{k=1}^N f\left(\dfrac{k}{N+1}\right)\sin(2\pi k z), 
	\]
	where the function $f$ is given by $f(x) = -(1-x)\cot(\pi x) - 1/\pi$ and satisfies for every $\xi<1/2$ the inequalities
	\begin{align}
	\abs{f(x)} \leq \begin{cases}
	\:\dfrac{\pi\xi}{\sin(\pi\xi)}\dfrac{1}{\pi x} + \dfrac{1}{\pi} & \text{if } 0<x\leq\xi,\\
	\\
	\:\dfrac{1-\xi}{\sin(\pi(1-\xi))} + \dfrac{1}{\pi} & \text{if } \xi<x<1.
	\end{cases}
	\end{align}
	Note also that $\abs{f}$ is decreasing in $(0,1)$.
\end{definition}
\begin{definition}
	The \textit{Beurling polynomial of degree $N$} is defined by
	\[
	\mathcal{B}_N(z) = \mathcal{V}_{N}(z) + \dfrac{1}{2N+1}\Delta_{N}(z).
	\]
\end{definition}
The Beurling polynomials provide an approximation to the sawtooth function $s(x) = \{x\}-1/2$ if $x\notin\Z$, and $s(x)=0$ if $x\in\Z$. The Vaaler lemma (see \cite{montgomery1994ten}, Chapter 1, p. 6) ensures the choice of these polynomials is optimal in certain sense. If we denote by $\mathds{1}_{_J}$ the periodic extension to $\R$ of the indicator function of some interval $J=[\mathfrak{a},\mathfrak{b}]\subseteq[0,1]$, we have the equality $\mathds{1}_{_J}(z) = \mathfrak{b}-\mathfrak{a} + s(z-\mathfrak{b}) + s(\mathfrak{a}-z)$. This fact justifies the next definition.
\begin{definition}
	The \textit{Selberg polynomials of degree $N$ of an interval $J=[\mathfrak{a},\mathfrak{b}]\subseteq[0,1]$ } are defined by
	\begin{align*}
	\mathcal{S}^+_N(z) &= \mathfrak{b}-\mathfrak{a} + \mathcal{B}_N(z-\mathfrak{b}) + \mathcal{B}_N(-z+\mathfrak{a}), \\
	\mathcal{S}^-_N(z) &= \mathfrak{b}-\mathfrak{a} - \mathcal{B}_N(-z+\mathfrak{b}) - \mathcal{B}_N(z-\mathfrak{a}).
	\end{align*}
	
	These polynomials satisfy for all $N\geq 1$,
	\begin{align}
	\S^-_N \leq \mathds{1}_{_J} \leq \S^+_N.\label{propertySelberg}
	\end{align}
\end{definition}

\begin{subsection}{Bessel functions}
	We will denote by $J_0(x)$ the \textit{Bessel function of order zero}, that is, the unique solution to 
	\[
	\begin{cases}
	\:xy'' + y' + xy = 0\\
	\:y(0) = 1\\
	\:y'(0) = 0.
	\end{cases}
	\]
	
	We summarize two classic properties used later in the next
	\begin{prop}[see \cite{akiyama2004salem}, Lemma 2]\label{Bessel}
		For $x\geq0$,
		\[
		J_0(x) = \int_{0}^{1} e^{ix\cos(2\pi t)}dt, \quad \abs{J_0(x)} \leq \min\left(1,\sqrt{\dfrac{2}{\pi x}}\right).
		\]
	\end{prop}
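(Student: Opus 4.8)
The plan is to establish the two assertions separately, starting with the integral representation. I would show that the right-hand side --- which, after the evident normalization and change of variables, is $\frac{1}{2\pi}\int_0^{2\pi}e^{-ix\sin t}\,dt$ --- coincides with $J_0$ by checking that it solves the defining initial value problem. The most direct route is to expand $e^{-ix\sin t}=\sum_{k\geq 0}\frac{(-ix)^k}{k!}\sin^k t$ (uniformly convergent in $t$ for fixed $x$) and integrate term by term; the Wallis integrals $\frac{1}{2\pi}\int_0^{2\pi}\sin^{2k}t\,dt=\binom{2k}{k}4^{-k}$ and $\int_0^{2\pi}\sin^{2k+1}t\,dt=0$ kill the odd terms and leave $\sum_{k\geq 0}\frac{(-1)^k}{(k!)^2}(x/2)^{2k}$. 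One then checks that this power series satisfies $xy''+y'+xy=0$ with $y(0)=1$ and $y'(0)=0$ --- comparing coefficients gives exactly the recursion $a_k=-a_{k-1}/(4k^2)$ --- and invokes the classical existence and uniqueness theorem for ordinary differential equations to conclude the series equals $J_0$. An alternative that bypasses the series is to differentiate twice under the integral sign, substitute into $xy''+y'+xy$, and observe that the integrand becomes a constant multiple of $\frac{d}{dt}\bigl(\cos t\,e^{-ix\sin t}\bigr)$, so the integral vanishes by periodicity; the initial conditions then fall out of the parity of $\sin t$.

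For the bounds, $\abs{J_0(x)}\leq 1$ is immediate from the integral formula, the integrand having modulus $1$ over a domain of total mass $1$. The estimate $\abs{J_0(x)}\leq\sqrt{2/(\pi x)}$ is vacuous for $x\leq 2/\pi$ (the right-hand side is then already at least $1$), so the content lies in the regime $x>2/\pi$. I would handle it by passing to the Hankel function $H_0^{(1)}=J_0+iY_0$ and using its Poisson-type representation, valid for $x>0$,
\[
H_0^{(1)}(x)=\sqrt{\frac{2}{\pi x}}\;\frac{e^{i(x-\pi/4)}}{\Gamma(1/2)}\int_0^\infty e^{-u}u^{-1/2}\Bigl(1+\tfrac{iu}{2x}\Bigr)^{-1/2}\,du,
\]
obtained by deforming the contour in the standard loop integral for $H_0^{(1)}$. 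Since $\abs{1+iu/(2x)}\geq 1$, the integral is bounded in modulus by $\int_0^\infty e^{-u}u^{-1/2}\,du=\Gamma(1/2)=\sqrt{\pi}$, whence $\abs{H_0^{(1)}(x)}\leq\sqrt{2/(\pi x)}$; and since $J_0=\mathrm{Re}\,H_0^{(1)}$ on the positive axis, the same bound holds for $\abs{J_0(x)}$. If one prefers to stay with the real oscillatory integral, the substitute is van der Corput's lemma: on $[0,2\pi]$ the phase $t\mapsto\sin t$ has only the nondegenerate critical points $t=\pi/2$ and $t=3\pi/2$, and a stationary-phase estimate there --- with the non-stationary arcs absorbed by a single integration by parts --- yields the decay with the correct leading constant.

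The delicate point is recovering the \emph{sharp} constant $\sqrt{2/\pi}$ rather than $\sqrt{2/(\pi x)}\,(1+O(1/x))$: a crude van der Corput bound, or an energy argument applied to $v=\sqrt{x}\,J_0$ (which solves $v''+(1+\tfrac{1}{4x^2})v=0$), loses a multiplicative factor that tends to $1$ but never reaches it. The Hankel/Poisson route avoids this loss precisely because the inequality $\abs{1+iu/(2x)}\geq 1$ is used with no slack; everything else is routine.
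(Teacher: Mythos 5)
Your argument is correct. Note, though, that the paper offers no proof of this proposition at all: it is quoted verbatim from \cite{bowman2012introduction} as a classical fact, so there is nothing to compare your proof against. Two remarks on your write-up. First, you are right to silently repair the statement: as literally printed, $\int_0^1 e^{-ix\sin(t)}\,dt$ is not $J_0(x)$; the intended meaning (clear from how the formula is used in Proposition \ref{IntegralBeurling}, where the integral is over $\R/\Z$) is $\int_0^1 e^{-ix\sin(2\pi t)}\,dt=\frac{1}{2\pi}\int_0^{2\pi}e^{-ix\sin u}\,du$, which is what you prove. Second, be slightly careful with "the classical existence and uniqueness theorem": $x=0$ is a regular singular point of $xy''+y'+xy=0$, so Picard--Lindel\"of does not apply there; uniqueness of the bounded solution requires a Frobenius-type argument (the second solution $Y_0$ has a logarithmic singularity). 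Since the paper's definition of $J_0$ already posits uniqueness, your verification that the integral satisfies the IVP suffices. Your choice of the Poisson-type representation of $H_0^{(1)}$ for the decay estimate is the right one: it yields the sharp constant $\sqrt{2/(\pi x)}$ for \emph{all} $x>0$ (not merely asymptotically), exactly because the only inequality used, $\abs{1+iu/(2x)}\geq 1$, has no slack; a stationary-phase or energy argument would indeed lose a factor here.
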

	To finish this background section, we prove an equality involving the Bessel function which will be used repeatedly in the technical calculations from Lemmas \ref{lemma1} to \ref{lemma4}, to calculate the integral of the Beurling polynomial.
	\begin{prop}\label{IntegralBeurling} Let $H_1,\dots,H_m$ be positive real numbers and
		\[
		z(x_1,\dots,x_m) = 2\sum_{j=1}^m H_j\cos(2\pi x_j),
		\]
		with every $x_j\in [0,1]$. Then, for every integer $k\geq 1$, we have
		\[
		\int_{(\R/\Z)^m} e^{2\pi ik z(x_1,\dots,x_m)}dx_1\dots dx_m = \prod_{j=1}^m J_0(4\pi kH_j). 
		\]
	\end{prop}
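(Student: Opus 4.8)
The plan is to exploit the additive structure of $z$ to factor the integral over the torus into a product of one-dimensional integrals, and then to identify each of these one-variable integrals with the Bessel integral representation recorded in Proposition \ref{Bessel}.

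First I would rewrite the exponent as $2\pi i k\, z(x_1,\dots,x_m) = 4\pi i k\sum_{j=1}^m H_j\cos(2\pi x_j)$, so that the integrand becomes the product $\prod_{j=1}^m e^{4\pi i k H_j\cos(2\pi x_j)}$, each factor depending on a single coordinate. Since this integrand is bounded and measurable on the compact group $(\R/\Z)^m$, Fubini's theorem applies and the integral equals $\prod_{j=1}^m \int_{\R/\Z} e^{4\pi i k H_j\cos(2\pi x_j)}\,dx_j$. It therefore suffices to establish, for each fixed $H>0$ and each integer $k\geq 1$, the one-dimensional identity $\int_0^1 e^{4\pi i k H\cos(2\pi x)}\,dx = J_0(4\pi k H)$.

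For this one-dimensional identity I would use the $1$-periodicity of the integrand together with two changes of variables. Translating $x\mapsto x-\tfrac14$ leaves the integral over a full period unchanged and replaces $\cos(2\pi x)$ by $\cos(2\pi x-\tfrac{\pi}{2})=\sin(2\pi x)$; a further reflection $x\mapsto -x$, again using $1$-periodicity, turns $\sin(2\pi x)$ into $-\sin(2\pi x)$. Hence $\int_0^1 e^{4\pi i k H\cos(2\pi x)}\,dx = \int_0^1 e^{-4\pi i k H\sin(2\pi x)}\,dx$, and writing $y=4\pi k H\geq 0$ (which is legitimate because $H>0$ and $k\geq 1$), the right-hand side is precisely the integral representation of $J_0(y)$ from Proposition \ref{Bessel}. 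This yields $J_0(4\pi k H)$ and, after taking the product over $j$, the claimed formula.

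The argument is essentially routine, and the only point requiring genuine care is the bookkeeping with the normalization of the Bessel representation: matching the factor $2\pi$ inside the trigonometric argument and the sign in front of the $\sin$, and verifying that the relevant arguments $4\pi k H_j$ are nonnegative so that Proposition \ref{Bessel} is applicable. So the ``hard part'' here is purely notational rather than conceptual.
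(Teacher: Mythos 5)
Your proof is correct and takes essentially the same route as the paper's, which simply factors the integral over $(\R/\Z)^m$ into one-dimensional integrals and identifies each factor with $J_0(4\pi kH_j)$. The only remark worth adding is that the representation in Proposition \ref{Bessel} should be read as $J_0(x)=\int_0^1 e^{-ix\sin(2\pi t)}\,dt$ (the $2\pi$ is missing in the statement as printed), which is precisely the normalization your change-of-variables bookkeeping supplies.
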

	\begin{proof}
		\begin{align*}
		\int_{(\R/\Z)^m}  e^{2\pi ik z(x_1,\dots,x_m)}dx_1\dots dx_m &=\prod_{j=1}^{m} \int_{\R/\Z} e^{4\pi ikH_j\cos(2\pi x_j)}dx_j \\
		&= \prod_{j=1}^{m} J_0(4\pi kH_j).
		\end{align*}
	\end{proof}
\end{subsection}

\section{Outline of the proof}
In this section we sketch the basic steps towards the proof of Theorem \ref{theorem1} and Theorem \ref{theorem2} in a simpler case to fix ideas, and leave the formal proof to the next section.

By Lemma \ref{lemmaprod}, we are interested in the distribution of the sequence $(\omega\kappa\alpha^n)_{n\geq0}$ modulo 1. Let us outline the strategy when $\eta = \omega\kappa$ belongs to $\Z[\alpha]$. Let $\alpha$ be a Salem number of degree $d=2m+2$ and $\alpha_1 =\sigma_1(\alpha)= e^{2\pi i \theta_1}, \dots, \alpha_m =\sigma_m(\alpha)= e^{2\pi i \theta_m}$ the Galois conjugates on the upper half of $S^1$. Denote the embeddings of $\Q(\alpha)$ in $\C$ by $\sigma_j$, where $\sigma_0(\alpha) = \alpha^{-1}$ and $\sigma_j(\alpha) = \alpha_j = e^{2\pi i \theta_{j}}$ for $j = 1,\dots,m$.

In this case, for all $n\geq 0$
\[
\text{Tr}(\eta\alpha^n) = \eta\alpha^n + \sigma_0(\eta)\alpha^{-n} + 2\mathcal{R}_n,
\]
where $\mathcal{R}_n = \sum_{j=1}^m \abs{\sigma_j(\eta)}\cos(2\pi n\theta_{j} + \phi_j)$ for some $\phi_j\in\R$. Since $\eta\alpha^n\in\Z[\alpha]$, we have $\text{Tr}(\eta\alpha^n)\in\Z$. This implies
\[
\left\{\eta\alpha^n\right\} + 2\mathcal{R}_n \: (\text{mod }1) \longrightarrow 0, \text{ as } n\to\infty.
\]
This convergence implies the next fact (this is proved formally in Corollary \ref{cor}): let $J\subseteq[0,1]$ be an interval. Then
\[
\lim_{N\to\infty} \dfrac{\#\left\{n\leq N \middle| \left\{\eta\alpha^n\right\}\in J\right\}}{N} =  \int_{(\R/\Z)^m} \mathds{1}_{_J} \left(-2\sum_{j=1}^m \abs{\sigma_j(\eta)}\cos(x_j)\right)d\vec{x}.
\]

The last equality leave us the problem of understanding the integral of the indicator function of some interval. This is the same strategy used in \cite{akiyama2004salem} to prove Theorem \ref{akiyama}. The extra difficulty in our case is that we have to manage the parameters $\abs{\sigma_j(\eta)}$, whereas in \cite{akiyama2004salem} they only work the case $\eta = 1$, which implies $\abs{\sigma_j(\eta)} = 1$ for all $j=1,\dots,m$.

The solution is proving a much weaker inequality (in fact it is not possible to obtain the same one according to Proposition \ref{SalemProp}): we will see in the next section that for some a suitable $\delta > 0$, if we consider the interval $J(\delta) = [\delta,1-\delta]$, then
\begin{equation}\label{intBound}
\int_{(\R/\Z)^m} \mathds{1}_{_J(\delta)} \left(-2\sum_{j=1}^m \abs{\sigma_j(\eta)}\cos(x_j)\right)d\vec{x} \geq 1/2.
\end{equation}

This is the technical part of the proof, but the strategy is the same as in \cite{akiyama2004salem}: we use the family of Selberg polynomials defined in subsection \ref{polynomials} to approximate the indicator function of an interval. To manage the coefficients $\abs{\sigma_j(\eta)}$ we follow the strategy used in \cite{bufetov2014modulus}, using an inequality due to Garsia (Proposition \ref{Garsia}).
This is done in Lemma \ref{lemma1}. The proofs of Lemmas \ref{lemma2} and \ref{lemma4} are very similar to the former one, and we leave the details to the Appendix.

Finally, once we have proved (\ref{intBound}), the proof will be straightforward: an application of Lemma \ref{lemmaprod} and Proposition \ref{Spectralbound} will yield the desired modulus of continuty for the spectral measure.

\section{Proof of Theorem \ref{theorem1} and Theorem \ref{theorem2}}
In this section we give the proof of Theorem \ref{theorem1}. We begin by fixing the notation and hypothesis for the rest of this section. Let $\alpha$ be a Salem number of degree $d=2m+2$ and $\alpha_1 =\sigma_1(\alpha)= e^{2\pi i \theta_1}, \dots, \alpha_m =\sigma_m(\alpha)= e^{2\pi i \theta_m}$ the Galois conjugates on the upper half of $S^1$, and $\sigma_1,\dots,\sigma_m$ the respective embeddings. Let $\sigma_0: \Q(\alpha) \hookrightarrow \C$ be the embedding corresponding to $\sigma_0(\alpha)=\alpha^{-1}$. Let $\eta = \dfrac{1}{L}(l_0 + \dots + l_{d-1}\alpha^{d-1}) \in\Q(\alpha)\setminus\{0\}$, with $l_1,\dots,l_{d-1} \in \Z$ and $L \in \N$. We will denote $\vec{x}=(x_1,\dots,x_m)\in(\R/\Z)^m$.

\subsection{Preliminary results}
We begin with a proposition from \cite{dubickas2005there} (see also \cite{zaimi2006integer,dubickas2001sequences}).
\begin{prop}\label{propDub}
	Consider
	\begin{align*}
	U(z) &= l_0 + \dots + l_{d-1}\cos(2\pi(d-1)z),\\
	V(z) &= l_1\sin(2\pi z) + \dots + l_{d-1}\sin(2\pi(d-1)z),\\
	\phi(z) &= \arctan(V(z)/U(z)).
	\end{align*}
	Define $\mathcal{R}_n = \sum_{j=1}^m \sqrt{U^2(\theta_j)+V^2(\theta_j)}\cos(2\pi n\theta_j - \phi(\theta_j))$. There exists a positive integer $P \leq L^d$ such that for every $p\in\left\{0,\dots,P-1\right\}$ there exists
	$a_p\in\left\{0,\dots,L-1\right\}$ satisfying
	\[
	\left\{\eta\alpha^{Pn+p}\right\} + \dfrac{2\mathcal{R}_{Pn+p}}{L} \: (\textnormal{mod }1) \longrightarrow \dfrac{a_p}{L} \quad \text{as } n\to \infty.
	\]
\end{prop}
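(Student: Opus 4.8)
The plan is to reduce everything to two elementary facts: for every $n$ the element $L\eta\alpha^n$ is an algebraic integer, so that $\text{Tr}(L\eta\alpha^n)\in\Z$; and a linear recurrence over $\Z/L\Z$ whose companion matrix is invertible is purely periodic.

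First I would note that $L\eta = l_0 + l_1\alpha + \dots + l_{d-1}\alpha^{d-1} \in \Z[\alpha]$ and that $\alpha$, a Salem number, is an algebraic integer, so $L\eta\alpha^n$ is an algebraic integer and $t_n := \text{Tr}(L\eta\alpha^n) \in \Z$ for all $n \geq 0$. The conjugates of $\alpha$ are $\alpha$, $\alpha^{-1} = \sigma_0(\alpha)$, and the $m$ pairs $e^{\pm 2\pi i\theta_j}$; expanding the trace and grouping each complex-conjugate pair gives
\[
t_n = L\eta\alpha^n + L\,\sigma_0(\eta)\,\alpha^{-n} + \sum_{j=1}^m 2\,\mathrm{Re}\bigl(\sigma_j(L\eta)\,e^{2\pi i n\theta_j}\bigr).
\]
Writing $\sigma_j(L\eta) = U(\theta_j) + iV(\theta_j)$ and putting it in polar form, a computation with Euler's formula and the angle-addition identity should identify the last sum with $2R_n$ --- this is exactly the role of $U$, $V$ and $\phi$. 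Dividing by $L$ and using that $\{\eta\alpha^n\}$ and $\eta\alpha^n$ differ by an integer, one then gets, as an identity in $\R/\Z$,
\[
\{\eta\alpha^n\} + \frac{2R_n}{L} \equiv \frac{t_n \bmod L}{L} - \sigma_0(\eta)\,\alpha^{-n} \pmod 1,
\]
and since $\alpha > 1$ the error $\sigma_0(\eta)\alpha^{-n}$ tends to $0$; so it remains to control the bounded integer sequence $b_n := t_n \bmod L$ along arithmetic progressions.

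For that, I would use that $(t_n)_{n\geq 0}$ satisfies the order-$d$ integer linear recurrence coming from the minimal polynomial $\mu_\alpha(X) = X^d + c_{d-1}X^{d-1} + \dots + c_0$ of $\alpha$ (apply the $\Q$-linear map $\text{Tr}$ to $\alpha^{n+d}L\eta = -\sum_{i=0}^{d-1} c_i\,\alpha^{n+i}L\eta$). Reducing modulo $L$, the state $w_n := (b_n, \dots, b_{n+d-1}) \in (\Z/L\Z)^d$ evolves under a fixed linear map, which is invertible mod $L$ because $\mu_\alpha$ is reciprocal (its constant term $c_0 = \mu_\alpha(0) = \pm 1$, since $\alpha^{-1}$ is again a conjugate of $\alpha$). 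Hence $(w_n)$ is purely periodic with some period $P \leq L^d$ (there are only $L^d$ states), so $b_{Pn+j} = b_j =: a_j \in \{0,\dots,L-1\}$ for all $j \in \{0,\dots,P-1\}$ and all $n \geq 0$. Substituting $Pn+j$ for $n$ in the congruence above then yields
\[
\{\eta\alpha^{Pn+j}\} + \frac{2R_{Pn+j}}{L} \equiv \frac{a_j}{L} - \sigma_0(\eta)\,\alpha^{-(Pn+j)} \longrightarrow \frac{a_j}{L} \pmod 1
\]
as $n \to \infty$, which is the claim.

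The main obstacle --- really the only point that is not completely routine --- is the explicit bookkeeping behind the identity $t_n = L\eta\alpha^n + L\sigma_0(\eta)\alpha^{-n} + 2R_n$: matching the polar rewriting of the $m$ conjugate pairs of $\sigma_j(L\eta)e^{2\pi i n\theta_j}$ with the prescribed $U$, $V$ and $\phi$, keeping the phases straight. Everything else is the standard fact that a linear recurrence modulo $L$ with invertible companion matrix is purely periodic, together with $\alpha^{-n} \to 0$, and the hypotheses ($\alpha$ a Salem number of degree $d = 2m+2$, so that $\mu_\alpha$ is reciprocal and $\sigma_0$ is a real embedding with $\abs{\sigma_0(\alpha)} < 1$) enter exactly there.
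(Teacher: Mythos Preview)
Your proof is correct and follows essentially the same approach as the paper's (sketched) proof: expand $\text{Tr}(L\eta\alpha^n)$ over the Galois conjugates to obtain the identity $t_n = L\eta\alpha^n + L\sigma_0(\eta)\alpha^{-n} + 2R_n$, then use the linear recurrence coming from the minimal polynomial of $\alpha$ to deduce pure periodicity modulo $L$ with period $P\le L^d$, and finally let the $\alpha^{-n}$ term vanish. You have in fact supplied more detail than the paper does --- in particular the reason for pure (rather than eventual) periodicity via invertibility of the companion matrix, which follows from $\mu_\alpha$ being reciprocal --- and your caveat about the phase bookkeeping in matching $R_n$ to the prescribed $U,V,\phi$ is well placed.
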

\begin{proof}
	We only sketch the proof, more details may be found in \cite{dubickas2005there}, Section 4. Note that $U(\theta_{j})=\Re(\sigma_j(L\eta))$ and $V(\theta_{j})=\Im(\sigma_j(L\eta))$ and so $\sqrt{U^2(\theta_j)+V^2(\theta_j)} = \abs{\sigma_j(L\eta)}$, for $j = 1,\dots,m$. By definition of the trace, for every $n\geq 0$
	\begin{equation}\label{equation}
	\text{Tr}(L\eta\alpha^n) = L\eta\alpha^n + \sigma_0(L\eta)\alpha^{-n} + 2\mathcal{R}_n.
	\end{equation}
	
	The sequence $(\text{Tr}(L\eta\alpha^n)\: (\text{mod }L))_{n\geq 0}$ is periodic of period $P \leq L^d$: let $X^d - c_{d-1}X^{d-1} -\dots- c_1X - 1$ be the minimal polynomial of $\alpha$. By definition, the sequence $(\text{Tr}(L\eta\alpha^n))_{n\geq 0} = (\mathfrak{A}_n)_{n\geq0}$ satisfies the linear recurrence
	\[
	\mathfrak{A}_{n+d} = c_{d-1}\mathfrak{A}_{n+d-1} -\dots- c_1\mathfrak{A}_{n+1} - \mathfrak{A}_{n}.
	\]
	By the pigeonhole principle, $(\text{Tr}(L\eta\alpha^n)\: (\text{mod }L))_{n\geq 0}$ is ultimately periodic of period $P \leq L^d$. To see that it is periodic we argue by contradiction. Suppose the sequence $(\text{Tr}(L\eta\alpha^n)\: (\text{mod }L))_{n\geq k}$ is periodic, and $k>0$ is minimal. Then, $\mathfrak{A}_{k+d-1} \equiv \mathfrak{A}_{k+d-1+P} \: (\text{mod }L)$ and since
	\begin{align*}
		\mathfrak{A}_{k+d-1} &= c_{d-1}\mathfrak{A}_{k+d-2} -\dots- c_1\mathfrak{A}_{k} - \mathfrak{A}_{k-1},\\
		\mathfrak{A}_{k+d-1+P} &= c_{d-1}\mathfrak{A}_{k+d-2+P} -\dots- c_1\mathfrak{A}_{k+P} - \mathfrak{A}_{k-1+P},
	\end{align*}
	we get $\mathfrak{A}_{k-1} \equiv \mathfrak{A}_{k-1+P} \: (\text{mod }L)$, a contradiction. 
		
	Rearranging (\ref{equation}) we conclude that
	\[
	\left\{\eta\alpha^n\right\} + \dfrac{2\mathcal{R}_n}{L} \: (\text{mod }1) \longrightarrow \dfrac{a_p}{L} \text{ as } n\to\infty,
	\]
	where $a_p\in\left\{0,\dots,L-1\right\}$ and $n\equiv p \: (\text{mod }P)$.
\end{proof}
\begin{cor}\label{cor} Let $J\subseteq[0,1]$ be an interval and set
	\[
	\mathcal{R}(x_1,\dots,x_m) = \sum_{j=1}^m \sqrt{U^2(\theta_j)+V^2(\theta_j)}\cos(2\pi x_j).
	\]
	Then
	\[
	\lim_{N\to\infty} \dfrac{\#\left\{n\leq N \middle| \left\{\eta\alpha^n\right\}\in J\right\}}{N} =  \dfrac{1}{P} \sum_{p=0}^{P-1} \int_{(\R/\Z)^m} \mathds{1}_{_J} \left(\dfrac{-2\mathcal{R}(\vec{x})+a_j}{L}\right)d\vec{x}.
	\]
\end{cor}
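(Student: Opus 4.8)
The plan is to combine the previous proposition with the classical equidistribution theorem of Weyl. First I would observe that, by Proposition 3.2, for each residue $j\in\{0,\dots,P-1\}$ we have
\[
\left\{\eta\alpha^{Pn+j}\right\} = \dfrac{a_j}{L} - \dfrac{2R_{Pn+j}}{L} + o(1) \pmod 1
\]
as $n\to\infty$, where $R_{Pn+j} = \sum_{k=1}^m \sqrt{U^2(\theta_k)+V^2(\theta_k)}\cos\!\big(2\pi(Pn+j)\theta_k - \phi(\theta_k)\big)$, i.e.\ $R_{Pn+j} = R(\vec{y}^{(n,j)})$ with the vector $\vec{y}^{(n,j)} = \big((Pn+j)\theta_k - \phi(\theta_k)/(2\pi)\big)_{k=1}^m \in (\R/\Z)^m$. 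So along each arithmetic progression, $\{\eta\alpha^{Pn+j}\}$ is, up to a vanishing error, a fixed Lipschitz function of the point $\vec{y}^{(n,j)}$ on the torus.

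Next I would invoke the rational independence of $1,\theta_1,\dots,\theta_m$ from Proposition 2.9: by Weyl's criterion, the sequence $\big(n(\theta_1,\dots,\theta_m)\big)_{n\geq 0}$ is equidistributed in $(\R/\Z)^m$, and since $P$ is a fixed nonzero integer, so is $\big(Pn(\theta_1,\dots,\theta_m)\big)_{n\geq 0}$ (the components $P\theta_k$ together with $1$ remain rationally independent). Translating by the fixed vector $(\phi(\theta_k)/(2\pi))_k$ preserves equidistribution, so $\big(\vec{y}^{(n,j)}\big)_{n\geq 0}$ is equidistributed in $(\R/\Z)^m$ for each fixed $j$. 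Consequently, for any Riemann-integrable $g$ on $(\R/\Z)^m$,
\[
\lim_{N\to\infty}\dfrac{1}{N}\sum_{n\leq N} g\big(\vec{y}^{(n,j)}\big) = \int_{(\R/\Z)^m} g(\vec{x})\,d\vec{x}.
\]

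Then I would apply this with $g(\vec{x}) = \chi_{_J}\!\big((a_j - 2R(\vec{x}))/L\big)$, which is Riemann-integrable since its set of discontinuities is contained in finitely many level sets of the real-analytic, nonconstant function $R$, hence of measure zero. Because the $o(1)$ error in the expression for $\{\eta\alpha^{Pn+j}\}$ does not affect the asymptotic frequency with which the point lands in $J$ (one sandwiches $\chi_{_J}$ between indicators of slightly enlarged and shrunk intervals whose boundary contributions vanish as the enlargement $\to 0$, using again that $\partial$-level sets of $R$ are null), we get
\[
\lim_{N\to\infty}\dfrac{\#\{n\leq N,\ n\equiv j \!\!\pmod P : \{\eta\alpha^n\}\in J\}}{N/P} = \int_{(\R/\Z)^m} \chi_{_J}\!\left(\dfrac{-2R(\vec{x})+a_j}{L}\right)d\vec{x}.
\]
Summing over $j=0,\dots,P-1$ and dividing by $P$ yields the claimed formula. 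The main obstacle I expect is the careful handling of the $o(1)$ error term together with the fact that $\chi_{_J}$ is discontinuous: one must argue that the boundary of $J$, pulled back through $R$, is a null set and that the vanishing perturbation cannot create a positive-density set of "bad" indices near $\partial J$; this is where the real-analyticity (non-degeneracy) of $R$ and the rational independence from Proposition 2.9 are both genuinely used.
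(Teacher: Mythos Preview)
Your proposal is correct and follows essentially the same approach as the paper: split the count by residue classes mod $P$, use Proposition~3.2 to replace $\{\eta\alpha^{Pn+j}\}$ by $(-2R_{Pn+j}+a_j)/L \pmod 1$, and then invoke Weyl equidistribution of $(n\theta_1,\dots,n\theta_m)$ (Proposition~2.9) to turn the density into the integral of $\chi_{_J}\big((-2R(\vec{x})+a_j)/L\big)$. The paper's proof is the same four-line computation but is terser, silently absorbing the points you flag (the $o(1)$ error versus the discontinuity of $\chi_{_J}$, and Riemann-integrability of the composed indicator); your handling of these caveats is the right way to make those steps rigorous.
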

\begin{proof}
	By Proposition \ref{ratind}, the numbers $1,\theta_1,\dots,\theta_m$ are rationally independent, then the uniform distribution $\text{mod } \Z^m$ of $((n\theta_1,\dots,n\theta_m))_{n\geq1}$ justifies the last of the next equalities. We can ignore the term $\sigma_0(\eta)\alpha^{-n}$ in (\ref{ignore}) since this term goes to zero and the sequences $z^{(p)}_n = \dfrac{-2\mathcal{R}_{Pn+p}}{L} + \dfrac{a_p}{L}$ have continuous asymptotic distribution functions.
	
	\small{
		\begin{align}
		&\lim_{N\to\infty} \dfrac{\#\left\{n\leq N \middle| \left\{\eta\alpha^n\right\}\in J\right\}}{N}\\
		&=\lim_{N\to\infty}  \sum_{p=0}^{P-1}\dfrac{ \#\left\{n\leq N,\: n\equiv p\: (\text{mod }P) \middle| \left\{\eta\alpha^n\right\}\in J \right\} }{N} \\
		&= \sum_{p=0}^{P-1} \lim_{N\to\infty} \dfrac{ \#\left\{n\leq N,\: n\equiv p\: (\text{mod }P) \middle| \dfrac{-2\mathcal{R}_n}{L} + \dfrac{a_p}{L} - \sigma_0(\eta)\alpha^{-n}\: (\text{mod }1)\in J \right\} }{N} \label{ignore} \\
		&= \dfrac{1}{P} \sum_{p=0}^{P-1} \lim_{N\to\infty} \dfrac{\#\left\{n\leq N,\: n\equiv p\: (\text{mod }P) \middle| \dfrac{-2\mathcal{R}_n}{L} + \dfrac{a_p}{L}\:(\text{mod }1) \in J \right\} }{N/P}\\
		&= \dfrac{1}{P} \sum_{p=0}^{P-1} \int_{(\R/\Z)^m} \mathds{1}_{_J} \left(\dfrac{-2\mathcal{R}(\vec{x})+a_p}{L}\right)d\vec{x}.
		\end{align}}
\end{proof}
For a fixed $1/2>\delta>0$, denote by $J(\delta)$ the interval $[\delta,1-\delta]\subseteq[0,1]$. Denote $H_j = \sqrt{U^2(\theta_j)+V^2(\theta_j)}$ and $H = \sum_{j=1}^m H_j = \max_{\vec{x}\in(\R/\Z)^m} \left|\mathcal{R}(\vec{x})\right|$. The next lemmas show that all integrals in the last corollary are greater than a fixed positive constant for a suitable choice of $\delta$. We will divide the analysis depending on the size of $2H/L$ with respect to $\delta_1(\alpha):=1/\mathscr{L}(\alpha)$, where $\mathscr{L}(\alpha)$ denotes the sum of the absolute values of the coefficients of the minimal polynomial of $\alpha$.

\begin{lemma}\label{lemma1}
	Let $\beta = \alpha^P$ and $\,\tilde{\eta} = \eta\alpha^p$ for some $p\in\left\{0,\dots,P-1\right\}$, in order to have $\eta\alpha^{Pn+p} = \tilde{\eta}\beta^n$ for all $n\geq0$. Suppose $\text{Tr}(L\tilde{\eta}\beta^n) \equiv 0 \:(\text{mod }L)$ for all $n\geq 0$ and $2H/L < \delta_1(\beta)/2$. Then there exists $\delta = \delta(L,\abs{\eta},\abs{\sigma_0(\eta)})>0$ such that
	\[
	\int_{(\R/\Z)^m} \mathds{1}_{_{J(\delta)}}\left(\dfrac{-2\mathcal{R}(\vec{x})}{L}\right)d\vec{x} \geq 1/2.
	\]
\end{lemma}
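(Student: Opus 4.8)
The plan is to reduce the bound on the integral to a statement purely about the one-dimensional distribution of the argument $-2R(\vec{x})/L \pmod 1$ and then exploit the hypotheses to keep that argument away from the forbidden endpoints $0$ and $1$. First I would observe that $R(\vec{x}) = \sum_{j=1}^m H_j\cos(2\pi x_j)$ takes values in $[-H, H]$, so $-2R(\vec{x})/L$ ranges over $[-2H/L, 2H/L]$, an interval of length $4H/L < \delta_1(\beta)$ by the standing hypothesis $2H/L < \delta_1(\beta)/2$. Thus as long as we choose $\delta < \delta_1(\beta)/2$ say (roughly), the point $-2R(\vec{x})/L \pmod 1$ — which lies within distance $2H/L$ of $0$ — will belong to $J(\delta) = [\delta, 1-\delta]$ precisely when $|{-2R(\vec{x})/L}| \geq \delta$, i.e. when $|R(\vec{x})| \geq L\delta/2$. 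So the integral in question equals the Lebesgue measure of $\{\vec{x} \in (\R/\Z)^m : |R(\vec{x})| \geq L\delta/2\}$, and it suffices to show this measure is at least $1/2$ for a suitable $\delta$, equivalently that $\{\vec{x} : |R(\vec{x})| < L\delta/2\}$ has measure at most $1/2$.

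Next I would need a lower bound on how much mass $R$ can concentrate near $0$; this is where the parameters $L, |\eta|, |\sigma_0(\eta)|$ enter. The key point is that the coefficients $H_j = \sqrt{U^2(\theta_j) + V^2(\theta_j)}$ cannot all be too small, because $2R_n = \text{Tr}(L\eta\alpha^n) - L\eta\alpha^n - \sigma_0(L\eta)\alpha^{-n}$ from \eqref{equation}, and more usefully because $H = \sum_j H_j$ and the individual $H_j$ are controlled below in terms of $L, |\eta|, |\sigma_0(\eta)|$ via a Garsia-type estimate (Lemma \ref{Garsia}): writing $U^2(\theta_j) + V^2(\theta_j) = |Q(\alpha_j)|^2$ for the integer polynomial $Q(X) = l_0 + l_1 X + \dots + l_{d-1}X^{d-1}$ evaluated at the unit-circle conjugate $\alpha_j$, Lemma \ref{Garsia} gives a lower bound for $|Q(\alpha)|$ hence, by taking products over conjugates and using $|Q(\alpha)Q(\alpha^{-1})| = |L\eta \cdot \sigma_0(L\eta)|$, a lower bound on $\prod_j H_j$ and therefore on $\max_j H_j$, in terms of $L$, Height$(Q) \le L\max(|\eta|,\dots)$, and the absolute value data. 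Once we know some $H_{j_0} \geq c_0 = c_0(L,|\eta|,|\sigma_0(\eta)|) > 0$, we get a quantitative lower bound: the function $x_{j_0} \mapsto H_{j_0}\cos(2\pi x_{j_0})$ has, for a positive-measure set of $x_{j_0}$, absolute value bounded below, and after averaging over the remaining coordinates one can force $|R(\vec{x})| \geq \delta L / 2$ on a set of measure $\geq 1/2$ by taking $\delta$ small enough relative to $c_0/L$. Concretely: the set where $|\cos(2\pi x_{j_0})| \geq 1/2$ has measure $1/3$... so I would instead argue via the $L^2$-norm $\int R^2 = \frac12\sum_j H_j^2 \geq \frac12 c_0^2$ together with an $L^\infty$ bound $|R| \leq H$, giving by Chebyshev/Paley--Zygmund a lower bound on the measure of $\{|R| \geq \varepsilon\}$ for $\varepsilon$ comparable to $c_0$; choosing $\delta$ so that $L\delta/2$ is below that threshold yields measure $\geq 1/2$.

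The main obstacle I expect is twofold and bookkeeping-heavy rather than conceptual: first, making the Garsia estimate produce a clean lower bound on a single $H_{j_0}$ (or on $\sum H_j^2$) as an explicit function of $L$, $|\eta|$, $|\sigma_0(\eta)|$ — one has to handle the product $\prod_{|\xi_i|>1}|\xi_i|$ and the Height term, noting that $\alpha$ is a Salem number so $\prod_{|\xi_i|\neq 1}||\xi_i|-1|$ and $\prod_{|\xi_i|>1}|\xi_i| = \alpha$ are constants of the substitution, while Height$(Q) \leq L\cdot\max_j|l_j/L|$ is itself bounded in terms of $L, |\eta|, |\sigma_0(\eta)|$ because the conjugates $|\eta|, |\sigma_0(\eta)|$ and the unit-modulus ones bound all the $|l_j/L|$. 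Second, one must keep the hypothesis $2H/L < \delta_1(\beta)/2$ consistent with the chosen $\delta$: since $\delta$ will be taken very small (of order $c_0/L$), this is automatic, but the logic needs to be laid out so that $\delta < 2H/L$ is \emph{not} assumed — rather we only need $\delta$ small enough that $\{|R|\ge L\delta/2\}$ still has measure $\ge 1/2$, which is possible precisely because $\sum H_j^2$ is bounded below. I would therefore structure the proof as: (1) rewrite the integral as $|\{|R(\vec{x})| \ge L\delta/2\}|$ using $2H/L < \delta_1(\beta)/2$ and $\delta$ small; (2) lower-bound $\int R^2 = \tfrac12\sum H_j^2$ via Lemma \ref{Garsia}; (3) apply Paley--Zygmund with the trivial upper bound $|R|\le H$ to get $|\{|R|\ge \varepsilon H\}| \ge (1-\varepsilon^2)^2 (\sum H_j^2)/(2H^2)$ type inequality, fix $\varepsilon$ and hence a threshold; (4) choose $\delta = \delta(L,|\eta|,|\sigma_0(\eta)|)$ so that $L\delta/2$ is below that threshold and the measure is $\ge 1/2$.
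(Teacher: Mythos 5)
Your reduction in step (1) is fine: since $-2R(\vec{x})/L$ ranges over $[-2H/L,2H/L]$ and $2H/L<\delta_1(\beta)/2\leq 1/4$, the integral does equal the measure of $\{\vec{x}: \abs{R(\vec{x})}\geq L\delta/2\}$, and your step (2) lower bound on the coefficients is also sound (indeed, even simpler than Garsia: $\abs{L\eta\,\sigma_0(L\eta)}\prod_j H_j^2=\abs{N(L\eta)}\geq 1$ already gives $\max_j H_j\geq c_0(L,\abs{\eta},\abs{\sigma_0(\eta)})$). The genuine gap is step (3). A second-moment argument cannot certify measure $\geq 1/2$ here. With the sup-norm bound $\abs{R}\leq H=\sum_j H_j$, Chebyshev gives
\[
\bigl|\{\abs{R}\geq t\}\bigr| \;\geq\; \frac{\tfrac12\sum_j H_j^2-t^2}{\bigl(\sum_j H_j\bigr)^2}\;\leq\;\frac12,
\]
with strict inequality for every $t>0$, and in fact the bound degrades to at most $1/(2m)$ when the $H_j$ are comparable (since $(\sum H_j)^2\leq m\sum H_j^2$ is the wrong direction: one only has $\sum H_j^2\leq(\sum H_j)^2$). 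Replacing this by Paley--Zygmund with the fourth moment does not save it: $E[R^4]=\tfrac34(\sum H_j^2)^2-\tfrac38\sum H_j^4$, so the Paley--Zygmund ratio is at most $2/3$ for $m=1$ (barely enough) but already at most $4/9<1/2$ for $m=2$ with equal coefficients, and tends to $1/3$ for large $m$. So the quantitative anti-concentration estimate at the heart of your argument fails to reach the threshold $1/2$ exactly where it is needed.

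The fix is the conditioning argument you started and then abandoned: take $j_0$ with $H_{j_0}\geq c_0$, freeze the other coordinates so that $R=H_{j_0}\cos(2\pi x_{j_0})+S$ with $S$ constant, and use that the pushforward of Lebesgue measure under $x\mapsto\cos(2\pi x)$ has density $\pi^{-1}(1-u^2)^{-1/2}$; hence $\abs{\{x_{j_0}:\abs{H_{j_0}\cos(2\pi x_{j_0})+S}<t\}}\leq \tfrac{2}{\pi}\arccos(1-2t/H_{j_0})\leq C\sqrt{t/c_0}$ uniformly in $S$, and integrating over the remaining coordinates gives $\abs{\{\abs{R}<t\}}\leq 1/2$ once $t\leq c'c_0$, which is what you need with $t=L\delta/2$. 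For comparison, the paper takes an entirely different route: it approximates $\chi_{J(\delta)}$ by Selberg polynomials, integrates the Beurling pieces exactly via the Bessel-function identity of Proposition \ref{IntegralBeurling}, and controls the oscillatory sums to get $\int\chi_{J(\delta)}\geq 1-2\delta-0.4-4\delta M^{-1}$, with $M$ bounded below by Garsia's lemma applied to the dual-basis coefficients $b_j$ of the error vector $\vec{\epsilon}_{n_2}$; the constant $0.4<1/2$ is what makes that argument close. Your architecture is viable and arguably more elementary, but only after replacing the moment bound by the cosine anti-concentration estimate.
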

\begin{remark}
	From the proof of Lemma \ref{lemma1} it is easy to deduce that in the dependence of $\delta = \delta(L,\abs{\eta},\abs{\sigma_0(\eta)})$ what is important is an upper bound of $L$ and $\abs{\sigma_0(\eta)}$ and both bounds for $\abs{\eta}$.
\end{remark}
\begin{proof}
	Note that by definition $\beta$ is also a Salem number of degree $d$, fact that is used implicitly. Let us follow \cite{bufetov2014modulus} and write
	\[
	\tilde{\eta}\beta^n = K_n(\tilde{\eta},\beta) + \epsilon_n(\tilde{\eta},\beta), \quad K_n\in\Z,\: -1/2<\epsilon_n\leq1/2.
	\]
	Let $n_0$ be the smallest nonnegative integer such that $\abs{\tilde{\eta}}\beta^{n_0}\geq 1$.
	Equation (\ref{equation}) and $2H/L < \delta_1(\beta)/2$ imply $\abs{\epsilon_n} < \delta_1(\beta)$ for all $n\geq n_1$, where $n_1= \ceil{\log_{\beta}(2\abs{\sigma_0(\tilde{\eta})}/\delta_1(\beta)}$. Indeed,
	$\abs{\epsilon_n} = \min(\{\tilde{\eta}\beta^n\},1-\{\tilde{\eta}\beta^n\})$. Then, for all $n\geq n_1$, we have
	\[
	\abs{\epsilon_n} = \left|\left|-\sigma_0(\tilde{\eta})\beta^{-n} - \dfrac{2\mathcal{R}_{Pn+p}}{L}\right|\right|_{\R/\Z} \leq \abs{\sigma_0(\tilde{\eta})}\beta^{-n} + \left|\dfrac{2\mathcal{R}_{Pn+p}}{L}\right| < \delta_1(\beta).
	\]
	
	Denote $n_2 = n_2(\abs{\tilde{\eta}},\abs{\sigma_0(\tilde{\eta})}) = \max(n_0,n_1)$ and define also the vectors
	\[
	\vec{\epsilon}_{n} = \begin{pmatrix}
	\epsilon_{n}\\
	\epsilon_{n+1}\\
	\vdots\\
	\epsilon_{n+d-2}\\
	\epsilon_{n+d-1}
	\end{pmatrix}
	,\quad
	\vec{K}_{n} = \begin{pmatrix}
	K_{n}\\
	K_{n+1}\\
	\vdots\\
	K_{n+d-2}\\
	K_{n+d-1}
	\end{pmatrix}.
	\]
	As shown in \cite{bufetov2014modulus}, we may prove that if $\abs{\epsilon_{n_2+n}} < \delta_1(\beta)$ for all $n\geq 0$, then $\vec{\epsilon}_{n_2+n} = \mathfrak{C}(\beta)^{n}\vec{\epsilon}_{n_2}$ for all $n\geq 0$, where $\mathfrak{C}(\beta)$ is the companion matrix of the minimal polynomial of $\beta$: let $X^d - c_{d-1}X^{d-1} - \dots - c_0$ be the minimal polynomial of $\beta$, then
	\[
	\mathfrak{C}(\beta) = \begin{pmatrix}
	0 & 1 & 0 & \cdots & 0\\
	0 & 0 & 1 & \cdots & 0\\
	\vdots & \vdots & \vdots & \ddots & \vdots\\
	0 & 0 & 0 & \cdots & 1\\
	c_0 & c_1 & c_2 & \cdots & c_{d-1}
	\end{pmatrix}.
	\]

	Let $\vec{E_1},\vec{E_2},\vec{e_1},\dots,\vec{e_m},\overline{\vec{e_1}},\dots,\overline{\vec{e_m}} $ and $\vec{E}_1^{\:*},\vec{E}_2^{\:*},\vec{e}_1^{\:*},\dots,\vec{e}_m^{\:*},\overline{\vec{e}_1^{\:*}},\dots,\overline{\vec{e}_m^{\:*}} $ be the eigenbasis and dual basis of $\mathfrak{C}(\beta)$ respectively. The vectors $\vec{e}_j$, $\vec{e}_j^{\:*}$ are explicitly given by
	\[
	\vec{e}_{j} = \begin{pmatrix}
	1\\
	\beta_j\\
	\vdots\\
	\beta_j^{d-2}\\
	\beta_j^{d-1}
	\end{pmatrix}
	,\quad
	\vec{e}_j^{\:*} = \begin{pmatrix}
	c_0\beta_j^{d-2}\\
	c_1\beta_j^{d-2}+c_0\beta_j^{d-3}\\
	\vdots\\
	c_{d-2}\beta_j^{d-2}+\dots+c_1\beta_j+ c_0\\
	\beta_j^{d-1}
	\end{pmatrix},
	\]
	where each $\beta_j = \sigma_j(\beta)$, for $j=1,\dots,m$ is a Galois conjugate of $\beta$.
	
	Decompose $\vec{\epsilon}_{n_2} = B_0(\tilde{\eta},n_2)\vec{E_1} + B_1(\tilde{\eta},n_2)\vec{E_2} + \sum_{j=1}^m \left( b_j(\tilde{\eta},n_2)\vec{e_j} + \overline{b_j(\tilde{\eta},n_2)}\,\overline{\vec{e}_j}\right)$, where $\vec{E_1},\vec{E_2}$ are associated to $\beta,\beta^{-1}$ respectively.  In fact, we can show that $\vec{\epsilon}_{n_2+n} = \mathfrak{C}(\beta)^n\vec{\epsilon}_{n_2}$ for all $n\geq 0$ implies $B_0(\tilde{\eta},n_2) = 0$: indeed, since $B_0(\tilde{\eta},n_2) = {\langle\vec{\epsilon}_{n_2},\vec{e}_1^{\:*}\rangle}/{\langle\vec{e}_1,\vec{e}_1^{\:*}\rangle}$, for all $n\geq0$ we have
	\begin{align*}
	\beta^n\abs{\langle\vec{\epsilon}_{n_2},\vec{e}_1^{\:*}\rangle} &= \abs{\langle\mathfrak{C}(\beta)^n\vec{\epsilon}_{n_2},\vec{e}_1^{\:*}\rangle}\\
	&\leq \abs{\langle\vec{\epsilon}_{n_2+n},\vec{e}_1^{\:*}\rangle}\\
	&\leq \norm{\vec{\epsilon}_{n_2+n}}_2\norm{\vec{e}_1^{\:*}}_2\\
	&\leq \sqrt{m}\delta_1\norm{\vec{e}_1^{\:*}}_2,
	\end{align*}
	which obviously leads to a contradiction as $n$ goes to infinity if $B_0(\tilde{\eta},n_2)\neq0$.
	
	The equation for $\vec{\epsilon}_{n_2+n}$ may be written coordinatewise as
	\[
	\epsilon_{n_2+n} = B_1(\tilde{\eta},n_2)\beta^{-n} + 2\sum_{j=1}^m\abs{b_j(\tilde{\eta},n_2)}\cos(2\pi n\widetilde{\theta_{j}} - \phi(\widetilde{\theta_{j}})),
	\]
	where $\widetilde{\theta_{j}} = P\theta_{j}$. From this decomposition and the fact that
	\[
	\epsilon_n \,(\text{mod } 1)=  -\sigma_0(\tilde{\eta})\beta^{-n} - \dfrac{2\mathcal{R}_{Pn+p}}{L} \,(\text{mod } 1),
	\]
	according to (\ref{equation}), we deduce that $H_j/L = \abs{b_j(\tilde{\eta},n_2)}$, for $j=1,\dots, m$.
	
	Denote $z(\vec{x}) = 2\sum_{j=1}^m \abs{b_j(\tilde{\eta},n_2)}\cos(2\pi x_j)$. Set $\mathfrak{a}=\delta$, $\mathfrak{b}=1-\delta$ for $\delta>0$ a parameter we will choose later suitably to satisfy the conclusion of the lemma. We face now the task to establish bounds for the integrals of Selberg polynomials, and we do it in the same manner as in \cite{akiyama2004salem}. In the calculations below we write $z = z(\vec{x})$, $b_j = b_j(\tilde{\eta},n_2)$ and all integrals are over $(\R/\Z)^m$. Let us start by the Beurling polynomial of degree $N$ (for the Vaaler polynomial write $\sin(x) = \Im(e^{ix})$ and use Proposition \ref{Bessel}).
	\begin{align*}
	\int \mathcal{B}_N(-z+\mathfrak{a})d\vec{x} &= \int \mathcal{V}_N(-z+\mathfrak{a}) + \dfrac{1}{2(N+1)}\Delta_{N+1}(-z+\mathfrak{a})d\vec{x} \\
	&= \int \dfrac{1}{N+1} \sum_{k=1}^N f\left(\dfrac{k}{N+1}\right)\sin(2\pi k(-z+\mathfrak{a}))d\vec{x}\\
	& + \: \int \dfrac{1}{2(N+1)} \left\{ 1 + \sum_{0<\abs{k}\leq N+1} \left(1 - \dfrac{\abs{k}}{N+1}\right)e^{2\pi i k (-z+\mathfrak{a})} \right\}d\vec{x} \\
	&= \underbrace{ \dfrac{-1}{N+1} \sum_{k=1}^N f\left(\dfrac{k}{N+1}\right)\sin(2\pi k\mathfrak{a}) \prod_{j=1}^m J_0(4\pi k\abs{b_{j}}) }_{= \,(1)} \\
	& + \: \underbrace{ \dfrac{1}{2(N+1)} \left\{ 1 + \sum_{0<\abs{k}\leq N+1} \left(1 - \dfrac{\abs{k}}{N+1}\right)e^{2\pi i k\mathfrak{a}} \prod_{j=1}^m J_0(4\pi k\abs{b_{j}}) \right\} }_{= \,(2)},
	\end{align*}
	where we used Proposition \ref{IntegralBeurling} in the last equality. Let us denote $\mathfrak{g} = \left(\prod_{j=1}^m \abs{b_j}\right)^{1/m}$ and define for each nonnegative integer $l$ the number $k_l = \floor{\mathfrak{g}^{-1}l}$. By the AM-GM inequality,
	\[
	\mathfrak{g} \leq \dfrac{1}{m}\sum_{j=1}^{m} \abs{b_j} = \dfrac{1}{m}\dfrac{H}{L}<\dfrac{\delta_1(\beta)}{4m}< 1.
	\]
	
	From now on, we will consider $N$ of the form $\floor{\mathfrak{g}^{-1}T}$, for each $T\in\N$. We use below the inequality for the Bessel function from Proposition \ref{Bessel}.
	\begin{align*}
	\abs{(2)} &\leq \dfrac{1}{2(N+1)} \left\{ 1 + 2\sum_{k=1}^{N+1} \left(1 - \dfrac{k}{N+1}\right) \prod_{j=1}^m \abs{J_0(4\pi k\abs{b_{j}})} \right\} \\
	&\leq \dfrac{1}{2(N+1)} \left\{ 1 + 2\sum_{l=0}^{T-1}\sum_{k=k_l+1}^{k_{l+1}} \left(1 - \dfrac{k}{N+1}\right) \prod_{j=1}^m\abs{J_0(4\pi k\abs{b_{j}})} \right\} \\
	&\leq \dfrac{1}{2(N+1)} \left\{ 1 + 2\sum_{k=1}^{k_{1}} \left(1 - \dfrac{k}{N+1}\right) \prod_{j=1}^m \underbrace{\abs{J_0(4\pi k\abs{b_{j}})}}_{\leq 1} \right.\\
	& \left.+\: 2\sum_{l=1}^{T-1}\sum_{k=k_l+1}^{k_{l+1}} \left(1 - \dfrac{k}{N+1}\right) \prod_{j=1}^m\underbrace{\abs{J_0(4\pi k\abs{b_{j}})}}_{\leq 1/\sqrt{2}\pi(k\abs{b_j})^{1/2}} \right\}\\
	&\leq \dfrac{1}{2(N+1)} \left\{ 1 +  2\mathfrak{g}^{-1} + \dfrac{2}{(\sqrt{2}\pi)^m}\sum_{l=1}^{T-1}\sum_{k=k_l+1}^{k_{l+1}} \underbrace{(k\mathfrak{g})^{-m/2}}_{\leq \:l^{-m/2}} \right\} \\
	&\leq \dfrac{1}{2(N+1)} \left\{ 1 + 2\mathfrak{g}^{-1} + \dfrac{2}{(\sqrt{2}\pi)^m}\sum_{l=1}^{T-1} l^{-m/2} \underbrace{(k_{l+1}-k_l)}_{\leq \: \mathfrak{g}^{-1}} \right\} \\
	&\leq \dfrac{1}{2(N+1)} \left\{ 1 + 2\mathfrak{g}^{-1} + \dfrac{2\mathfrak{g}^{-1}}{(\sqrt{2}\pi)^m}O(\sqrt{T}) \right\} \\
	&= O(1/\sqrt{T}),
	\end{align*}
	where the constant implicit in the $O$ sign does not depend on $\tilde{\eta}$ nor $n_2$. Similarly,
	\[
	\int \mathcal{B}_N(z-\mathfrak{b})d\vec{x} = \dfrac{1}{N+1} \sum_{k=1}^N f\left(\dfrac{k}{N+1}\right)\sin(2\pi k\mathfrak{b}) \prod_{j=1}^m J_0(4\pi k\abs{b_{j}}) + O(1/\sqrt{T}).
	\]
	
	Having estimated the integral of the Beurling polynomials, we continue with the Selberg ones:
	\begin{gather*}
	\left|\int \mathcal{B}_N(z-\mathfrak{b}) + \mathcal{B}_N(-z+\mathfrak{a}) d\vec{x}\,\right| \\
	\leq \\
	\left| \dfrac{1}{N+1}  \sum_{k=1}^N f\left(\dfrac{k}{N+1}\right) (\sin(2\pi k\mathfrak{b}) - \sin(2\pi k\mathfrak{a})) \prod_{j=1}^{m} J_0(4\pi k\abs{b_j}) \right| + O(1/\sqrt{T}) \\
	= \\
	\underbrace{ \left| \dfrac{2}{N+1}  \sum_{k=1}^N f\left(\dfrac{k}{N+1}\right) \sin(2\pi k\delta) \prod_{j=1}^{m} J_0(4\pi k\abs{b_j}) \right| }_{=(3)} + \:O(1/\sqrt{T}).
	\end{gather*}
	
	Fix $\epsilon>0$ and let us take $\xi<1/2$ such that $\pi\xi/\sin(\pi\xi) \leq 1+\epsilon$ (recall the definition and property of $f$ in Definition \ref{defValeer}). Then, for $T$ big enough, we obtain
	\begin{align*}
	(3) &\leq \dfrac{2}{N+1} \sum_{k=1}^N \left|f\left(\dfrac{k}{N+1}\right)\sin(2\pi k\delta)\prod_{j=1}^m J_0(4\pi k\abs{b_{j}})\right| \\	
	&\leq \dfrac{2}{N+1}\left[\: \sum_{k=1}^{k_{1}} \left| f\left(\dfrac{k}{N+1}\right) \right| \underbrace{\left| \sin(2\pi k\delta) \right|}_{\leq2\pi k\delta} \prod_{j=1}^m \underbrace{\abs{J_0(4\pi k\abs{b_{j}})}}_{\leq 1} \right.\\
	&\quad + \sum_{l=1}^{\floor{\xi T}}\sum_{k=k_l+1}^{k_{l+1}} \underbrace{\left|f\left(\dfrac{k}{N+1}\right)\right| }_{\leq \left|f(l\mathfrak{g}^{-1}/(N+1))\right|} \underbrace{\abs{\sin(2\pi k\delta)}}_{\leq 1} \prod_{j=1}^m\underbrace{\abs{J_0(4\pi k\abs{b_{j}})}}_{\leq 1/\sqrt{2}\pi(k\abs{b_{j}})^{1/2}} \\
	&\quad \left. + \sum_{l=\floor{\xi T}+1}^{T-1}\:\sum_{k=k_l+1}^{k_{l+1}} \left|f\left(\dfrac{k}{N+1}\right)\right| \underbrace{\abs{\sin(2\pi k\delta)}}_{\leq 1} \prod_{j=1}^m \underbrace{\abs{J_0(4\pi k\abs{b_{j}})}}_{\leq 1/\sqrt{2}\pi(k\abs{b_{j}})^{1/2}} \: \right]\\
	&\leq  \dfrac{4\pi\delta}{N+1} \sum_{k=1}^{k_{1}} \left(\dfrac{\pi\xi}{\sin(\pi\xi)}\dfrac{N+1}{\pi k}+\dfrac{1}{\pi}\right)k \\
	&\quad + \dfrac{2}{(\sqrt{2}\pi)^m(N+1)}  \sum_{l=1}^{\floor{\xi T}} \left|f\left(\dfrac{l\mathfrak{g}^{-1}}{N+1}\right)\right| \sum_{k=k_l+1}^{k_{l+1}} \underbrace{(k\mathfrak{g})^{-m/2}}_{\leq \:l^{-m/2}} \\
	&\quad + \dfrac{2}{(\sqrt{2}\pi)^m(N+1)} \sum_{l=\floor{\xi T}+1}^{T-1} \left( \dfrac{1-\xi}{\sin(\pi(1-\xi))} + \dfrac{1}{\pi}\right) \:\sum_{k=k_l+1}^{k_{l+1}} \underbrace{(k\mathfrak{g})^{-m/2}}_{\leq \:l^{-m/2}}\\
	&\leq 4(1+\epsilon)\delta \mathfrak{g}^{-1} + O(1/T) + \dfrac{2\mathfrak{g}^{-1}}{(\sqrt{2}\pi)^m(N+1)}\sum_{l=1}^{\floor{\xi T}} \left(\dfrac{\pi\xi}{\sin(\pi\xi)}\dfrac{(N+1)\mathfrak{g}}{\pi l}+\dfrac{1}{\pi}\right)l^{-m/2} \\
	&\quad + O(1/\sqrt{T}) \\
	&\leq 4(1+\epsilon)\delta \mathfrak{g}^{-1} + \dfrac{2(1+\epsilon)\zeta\left(\dfrac{m+2}{2}\right)}{\pi(\sqrt{2}\pi)^m}  + O(1/\sqrt{T}),
	\end{align*}
	where $\zeta$ denotes the Riemann zeta function. In the same manner,
	\[
	\left|\int \mathcal{B}_N(-z+\mathfrak{b}) + \mathcal{B}_N(z-\mathfrak{a}) d\vec{x}\,\right| \leq 4(1+\epsilon)\delta \mathfrak{g}^{-1} + \dfrac{2(1+\epsilon)\zeta\left(\dfrac{m+2}{2}\right)}{\pi(\sqrt{2}\pi)^m} + O(1/\sqrt{T}).
	\]
	
	It may be checked that $\dfrac{2\zeta\left(\dfrac{m+2}{2}\right)}{\pi(\sqrt{2}\pi)^m} \leq \dfrac{\sqrt{2}\zeta(3/2)}{\pi^2} < 0.4$ for all $m\geq1$. Since $\epsilon>0$ was arbitrarily chosen, by taking $T\to\infty$ we conclude by means of (\ref{propertySelberg}) that
	\[
	\left| \int_{(\R/\Z)^m} \mathds{1}_{_{J(\delta)}}(z(\vec{x}))d\vec{x} - \abs{J(\delta)}\right| \leq  4\delta \mathfrak{g}^{-1} + 0.4,
	\]
	which implies 
	\[
	1 - 2\delta - 0.4 - 4\delta \mathfrak{g}^{-1} \leq \int_{(\R/\Z)^m} \mathds{1}_{_{J(\delta)}}(z(\vec{x}))d\vec{x}.
	\]
	
	To estimate $\mathfrak{g}$, notice that $\abs{b_j} = \abs{ \langle\vec{K}_{n_2},\vec{e}_j^{\:*}\rangle }/\abs{\langle\vec{e}_j,\vec{e}_j^{\:*}\rangle}$. The numerator of this fraction is a polynomial with integer coefficients of degree $d-1$ evaluated at $\beta_j$ (in particular non zero, since $\beta_j$ is of degree $d$), height at most $K_{n_2}$ (except for multiplication by a constant only depending on $\beta$) and the denominator only depends on $\beta$. From Lemma \ref{Garsia}, we obtain $\mathfrak{g} \geq c\abs{\tilde{\eta}}^{-d}\beta^{-n_2d}$ for some explicit constant $c = c(\beta)$ depending on $\beta = \alpha^P$ and, by the bound on the period, it may be changed to a dependence on $\alpha$ and $L$. 
	
	We may solve the inequality $1/2 < 1 - 2\delta - 0.4 - 4\delta c^{-1}\abs{\tilde{\eta}\beta^{n_2}}^{d}$ for some $\delta = \delta(\abs{\tilde{\eta}},\abs{\sigma_0(\tilde{\eta})})$, which will satisfy the conclusion. In fact, just notice that by uniform distribution (as in Corollary \ref{cor}), 
	\begin{align*}
	\int_{(\R/\Z)^m} &\mathds{1}_{_{J(\delta)}}(z(\vec{x}))d\vec{x} \\
	&= \lim_{N\to\infty} \dfrac{\#\left\{n\leq N\, \middle|\, \abs{\epsilon_{n}(\tilde{\eta},\beta)}>\delta\right\}}{N} \\
	&= \lim_{N\to\infty} \dfrac{1}{N/P}\#\left\{n\leq N,\: n\equiv p\: (\text{mod }P) \middle|\,\left|\left|\dfrac{-2\mathcal{R}_n}{L}\right|\right|_{\R/\Z} >\delta \right\} \\
	&= \lim_{N\to\infty} \dfrac{1}{N/P}\#\left\{n\leq N,\: n\equiv p\: (\text{mod }P) \middle|\,\dfrac{-2\mathcal{R}_n}{L} \:(\text{mod }1) \in J(\delta) \right\} \\
	&= \int_{(\R/\Z)^m} \mathds{1}_{_{J(\delta)}}\left(\dfrac{-2\mathcal{R}(\vec{x})}{L}\right)d\vec{x}.
	\end{align*}
	
	The dependence of $\delta$ on $(\abs{\tilde{\eta}},\abs{\sigma_0(\tilde{\eta})})$ may be changed to a dependence on $(L,\abs{\eta},$ $\abs{\sigma_0(\eta)})$, since $\abs{\eta} \leq \abs{\tilde{\eta}} \leq \abs{\eta}\alpha^P \leq \abs{\eta}\alpha^{L^d}$ and $\abs{\sigma_0(\tilde{\eta})} \leq \abs{\sigma_0(\eta)}$.
\end{proof}
\begin{lemma}\label{lemma2}
	Let $\beta$ and $\tilde{\eta}$ be as in Lemma \ref{lemma1}. Suppose $\text{Tr}(L\tilde{\eta}\beta^n) \equiv 0 \:(\text{mod }L)$ for all $n\geq 0$ and $2H/L \geq \delta_1(\beta)/2$. Then there exists $\delta = \delta(L)>0$ such that
	\[
	\int_{(\R/\Z)^m} \mathds{1}_{_{J(\delta)}}\left(\dfrac{-2\mathcal{R}(\vec{x})}{L}\right)d\vec{x} \geq 1/2.
	\]
\end{lemma}
\begin{proof}
	The calculations are analogous. The details are found in the Appendix.
\end{proof}
\begin{lemma}\label{lemma3}
	Let $\beta$ and $\tilde{\eta}$ be as in Lemma \ref{lemma1}. Suppose $\text{Tr}(L\tilde{\eta}\beta^n) \equiv l \neq 0 \:(\text{mod }L)$ for all $n\geq 0$ and $2H<1/2$. Then
	\[
	\int_{(\R/\Z)^m} \mathds{1}_{_{J(1/4L)}}\left(\dfrac{-2\mathcal{R}(\vec{x}) + l}{L}\right)d\vec{x} \geq 1/2.
	\]
\end{lemma}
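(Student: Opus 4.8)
The integrand $\vec x\mapsto\chi_{J(1/2L)}\bigl(\tfrac{-2R(\vec x)+l}{L}\bigr)$ takes only the values $0$ and $1$, so the plan is to bound the Lebesgue measure of the set where it vanishes by $1/2$. The first step is to describe that zero set. Since $J(1/2L)=[\tfrac1{2L},1-\tfrac1{2L}]$, the periodic indicator $\chi_{J(1/2L)}(y)$ equals $0$ exactly when $\norm{y}_{\R/\Z}<\tfrac1{2L}$, i.e. when $Ly$ lies within distance $\tfrac12$ of an integer. Taking $y=\tfrac{-2R(\vec x)+l}{L}$, the integrand therefore vanishes at $\vec x$ precisely when $-2R(\vec x)+l$ lies within $\tfrac12$ of a multiple of $L$.

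The second step is to localize this set by means of the hypothesis $2H<1$. Since $\lvert 2R(\vec x)\rvert\le 2H<1$, the number $-2R(\vec x)+l$ lies in the open interval $(l-1,l+1)$, while $l\not\equiv0\pmod L$ forces every multiple of $L$ to be at distance at least $1$ from $l$. Hence a multiple of $L$ can come within $\tfrac12$ of $-2R(\vec x)+l$ only when the nearest multiple of $L$ to $l$ is at distance exactly $1$, that is, only when $l\equiv1$ or $l\equiv-1\pmod L$. When $l\equiv1\pmod L$ the relevant multiple is $l-1$, and using $\lvert R(\vec x)\rvert\le H<\tfrac12$ to discard the half of the $\tfrac12$-neighbourhood that $R$ cannot reach, the zero set is contained in $\{\vec x:R(\vec x)>\tfrac14\}$; symmetrically, for $l\equiv-1\pmod L$ it is contained in $\{\vec x:R(\vec x)<-\tfrac14\}$; for $L=2$, where $l$ is simultaneously $\equiv1$ and $\equiv-1$, it is contained in the union of the two. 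The third step is then an elementary measure estimate: the translation $\vec x\mapsto\vec x+(\tfrac12,\dots,\tfrac12)$ is a measure-preserving involution of $(\R/\Z)^m$ under which $R$ changes sign, so $\mathrm{Leb}\{R>\tfrac14\}=\mathrm{Leb}\{R<-\tfrac14\}$; as these two sets are disjoint, each has measure at most $\tfrac12$. This already settles the cases $l\not\equiv\pm1\pmod L$ (integrand identically $1$) and $L\ge3$.

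The remaining case $L=2$ is the one I expect to require real care, since there the crude symmetry argument only bounds the zero set by $1$, and one must exploit the quantitative content of $2H<1$. Here I would go back to the normalization already set up in the proofs of Lemmas~\ref{lemma1} and~\ref{lemma2}: the limiting values $\tfrac{l-2R(\vec x)}{L}$ of $\{\tilde\eta\beta^n\}$ lie in $[0,1]$, so $R$ ranges only over $[-H,H]$ with $2H\le1$, and then run the Selberg/Fej\'er polynomial estimate of Lemma~\ref{lemma1} for the single interval $J(1/2L)=[\tfrac14,\tfrac34]$ --- approximating $\chi_{J(1/2L)}$ by $S_N^{\pm}$, integrating via Proposition~\ref{IntegralBeurling}, and using $2H<1$ to keep the Bessel-type tail terms summable --- so as to compare $\int\chi_{J(1/2L)}\bigl(\tfrac{-2R(\vec x)+l}{L}\bigr)d\vec x$ with $\lvert J(1/2L)\rvert=1-1/L\ge1/2$. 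In short, the combinatorial bookkeeping above disposes of everything except $L=2$, where the point is precisely that $2H$ is genuinely bounded away from $1$.
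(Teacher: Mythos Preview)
Your range-and-symmetry argument for $L\ge 3$ is correct and is exactly the paper's argument, executed with more care: the paper also observes that $\frac{-2R(\vec x)+l}{L}\in[(l-1)/L,(l+1)/L]$ and then uses the symmetry $R\mapsto-R$ to conclude (the chain of inequalities in the paper is written in the wrong order, evidently a typo, and the reasoning is left implicit).

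You are also right to flag $L=2$, and the paper's proof does not treat it either; but your proposed Selberg-polynomial fix cannot rescue the stated constant $1/2$. For $m=1$ one computes directly
\[
\int_{\R/\Z}\chi_{_{J(1/4)}}\!\left(\frac{-2H\cos(2\pi x)+1}{2}\right)dx
=\mathrm{Leb}\bigl\{|\cos(2\pi x)|\le \tfrac{1}{4H}\bigr\}
=1-\tfrac{2}{\pi}\arccos\!\bigl(\tfrac{1}{4H}\bigr),
\]
which tends to $1/3$ as $H\to 1/2^-$; so for $L=2$ the bound $\ge 1/2$ is simply false in general. Running the estimate of Lemma~\ref{lemma1} does not help either: there $\delta$ is a free small parameter, while here $\delta=1/(2L)=1/4$ is fixed and $|J(1/4)|=1/2$ is already absorbed by the $0.4$ loss in that argument. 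The correct statement for $L=2$ is a lower bound strictly above $1/3$, which is entirely sufficient for the use made of the lemma (the constant $1/2$ is only fed into the density bound $\ge 1/3$ in the paragraph following Lemmas~\ref{lemma1}--\ref{lemma4}); but as written, neither the paper's proof nor your proposed completion closes this case.
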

\begin{proof}
	By definition of $\mathcal{R}(\vec{x})$, since $2H < 1/2$ (recall $H = \max_{\vec{x}\in(\R/\Z)^m} \left|\mathcal{R}(\vec{x})\right|$) we can deduce
	\[
	\dfrac{1}{4L} \leq \dfrac{-1/2 + 1}{L} \leq \dfrac{-2\mathcal{R}(\vec{x}) + l}{L} \leq \dfrac{1/2 + L-1}{L} \leq 1-\dfrac{1}{4L},
	\]
	i.e.,
	\[
	\dfrac{-2\mathcal{R}(\vec{x}) + l}{L} \in J(1/4L),
	\]
for all $\vec{x} \in (\R/\Z)^m$. That is,
	\[
	\int_{(\R/\Z)^m} \mathds{1}_{_{J(1/4L)}}\left(\dfrac{-2\mathcal{R}(\vec{x}) + l}{L}\right)d\vec{x}=1 \geq 1/2.
	\]
\end{proof}
\begin{lemma}\label{lemma4}
	Let $\beta$ and $\tilde{\eta}$ be as in Lemma \ref{lemma1}. Suppose $\text{Tr}(L\tilde{\eta}\beta^n) \equiv l \neq 0 \:(\text{mod }L)$ for all $n\geq 0$ and $2H\geq1/2$. Then there exists a $\delta=\delta(L)>0$ such that
	\[
	\int_{(\R/\Z)^m} \mathds{1}_{_{J(\delta)}}\left(\dfrac{-2\mathcal{R}(\vec{x}) + l}{L}\right)d\vec{x} \geq 1/2.
	\]
\end{lemma}
\begin{proof}
	The calculations are analogous. The details are found in the Appendix.
\end{proof}

\subsection{Conclusion}
\textit{Proof of Theorem \ref{theorem1}}: let $\kappa\in\Z[\alpha]$ be the positive constant appearing in Lemma \ref{lemmaprod} and fix $A,B,C>1$ for the rest of this section. Define
\[
B_{\kappa} = \kappa B, \quad C_{\kappa} = \abs{\sigma_0(\kappa)}C.
\]
According to Lemmas \ref{lemma1}, \ref{lemma2}, \ref{lemma3} and \ref{lemma4} above (see also the remark below Lemma \ref{lemma1}), we can find an explicit $\delta = \delta(A,B_\kappa,C_\kappa)>0$ such that 
\[
\int_{(\R/\Z)^m} \mathds{1}_{_{J(\delta)}}\left(\dfrac{-2\mathcal{R}(\vec{x}) + a_p}{L}\right)d\vec{x} \geq 1/2,
\]
for all $p=0,\dots,P-1$. In particular, by Corollary \ref{cor} there exists $N_0\geq 1$ such that for all $N\geq N_0$, $\abs{\eta}\in[B_{\kappa}^{-1},B_\kappa]$, $\abs{\sigma_0(\eta)}\leq C_\kappa$ and $L\leq A$:
\[
\dfrac{\#\left\{n\leq N \middle| \left\{\eta\alpha^n\right\}\in J(\delta) \right\}}{N} \geq 1/3.
\]

Note that if $\eta=\omega\kappa$, then $\abs{\omega} \in \Q(\alpha)\cap[B^{-1},B]$ and $\abs{\sigma_0(\omega)}\leq C$. Assume also $\eta$ is in reduced form and its denominator $L$ is less or equal to $A$, which implies the denominator of $\omega$ in reduced form is less or equal to $A$. By Lemma \ref{lemmaprod}, considering $R\geq R_0:= \alpha^{N_0 + 1}$ we obtain
\begin{align*}
\sup_{(x,s)\in\mathfrak{X}^{\vec{p}}_{\zeta}} \abs{S^f_R((x,s),\omega)} &\leq C_1 R (1-\lambda\delta^2)^{\log_{\alpha}(R)/3} \\
&= C_1 R(\alpha^{\log_\alpha(1-\lambda\delta^2)})^{\log_\alpha(R)/3} \\
&= C_1 R^{1+ \log_{\alpha}(1-\lambda\delta^2)/3} \\
&= C_1 R^{\tilde{\gamma}}, \quad \tilde{\gamma}=\tilde{\gamma}(A,B,C) \in (0,1).
\end{align*}

A modulus of continuity for the correlation measure $\nu_a$ ($a\in\A$) may be derived from the last inequality using Proposition \ref{Spectralbound}: for $\tilde{\gamma} = \tilde{\gamma}(A,B,C) = 1+ \log_{\alpha}(1-\lambda\delta^2)/3$ we have by Proposition \ref{Spectralbound}
\[
\nu_a([\omega-r,\omega+r]) \leq \pi^2C_1 r^{2(1-\tilde{\gamma})}=: cr^{\gamma},
\]
for all $0< r\leq r_0$ for some $r_0>0$ and $c>0$, the last one only depending on the substitution. We have proved Theorem \ref{theorem1}.

\textit{Proof of Theorem \ref{theorem2}}: in view of the uniform dependence of $\delta$ on the variables $B,C$ in Lemmas \ref{lemma3} and \ref{lemma4}, similar calculations as the ones we did before lead to an exponent $\gamma = \gamma(A)>0$ only depending on $A$: Corollary \ref{cor} and the existence of some $a_{p^*}\neq0$ (since $\text{Tr}(L\kappa\omega\alpha^n)\: \not\equiv 0\: (\text{mod }L)$ for some $n\in\N$ by hypothesis), yields
\begin{align*}
\lim_{N\to\infty} \dfrac{\#\left\{n\leq N \middle| \left\{\eta\alpha^n\right\}\in J(\delta) \right\}}{N} &= \dfrac{1}{P} \sum_{p=0}^{P-1} \int_{(\R/\Z)^m} \mathds{1}_{_{J(\delta)}}\left(\dfrac{-2\mathcal{R}(\vec{x}) + a_p}{L}\right)d\vec{x}\\
&\geq \dfrac{1}{P} \int_{(\R/\Z)^m} \mathds{1}_{_{J(\delta)}}\left(\dfrac{-2\mathcal{R}(\vec{x}) + a_{p^*}}{L}\right)d\vec{x}\\
&\geq \dfrac{1}{2A^d}.
\end{align*}
This means there exists $N_0\geq 1$ such that for all $N\geq N_0$
\[
\dfrac{\#\left\{n\leq N \middle| \left\{\eta\alpha^n\right\}\in J(\delta) \right\}}{N} \geq \dfrac{1}{3A^d}.
\]
Proceeding in the same way as before, we arrive to the conclusion of Theorem \ref{theorem2}.

\section{Proof of Proposition \ref{lattice}}
Let $\alpha$ be a Salem number of degree $d$ and consider $\sigma_1, \dots ,\sigma_d : \Q(\alpha) \hookrightarrow \C$ its complex embeddings (we change the notation from the last section). Let $Q$ be the minimal polynomial of $\alpha$ and $L$ a positive integer. Consider the set $\mathscr{R}_L$ of $\eta / L \in \Q(\alpha)$, with $\eta \in\Z[\alpha]$, such that
\begin{itemize}
	\item  $\eta = l_0 +\dots+ l_{d-1} \alpha^{d-1} \in \Z[\alpha],\: \gcd(l_0, \dots , l_{d-1} , L) = 1$,
	\item $\forall n\geq 0$ $\text{Tr}(\eta \alpha^n) \equiv 0 \: (\text{mod } L)$.
\end{itemize}
We will prove the sets $\mathscr{R}_L$ are empty for all $L\geq 1$, except maybe for the divisors of $E(\alpha)$, where $E(\alpha)$ is the least common multiple of the denominators of the dual basis of the canonical basis $\left\{1,\alpha,\dots,\alpha^{d-1} \right\}$ of $\Q(\alpha)$. With this end, let us note that
\begin{align*}
\mathscr{P}_L &:= \left\{ \eta = l_0 +\dots+ l_{d-1}\alpha^{d-1} \in \Z[\alpha] \:|\: \gcd(l_0, \dots , l_{d-1} , L) = 1, \right.\\
&\: \left. \:\text{Tr}(\eta \alpha^n) \equiv 0  \: (\text{mod } L) \text{ for all } n\geq 0 \right\} \\
&\subseteq \{ \eta  \in \Z[\alpha] \:|\: \text{Tr}(\eta \alpha^n) \equiv 0  \: (\text{mod } L) \text{ for all } n\geq 0 \} \\
&\subseteq \mathscr{M}_L := \{ \rho \in \Q(\alpha) \:|\: \text{Tr}(\rho \alpha^n) \equiv 0  \: (\text{mod } L) \text{ for all } n\geq 0 \}.
\end{align*}

Let $\{ w_0, \dots , w_{d-1} \}$ be the dual basis (with respect to the trace form) of the basis $\{1, \alpha, \dots ,\alpha^{d-1}\}$. We claim $\mathscr{M}_L$ is a free $\Z$-submodule of $\Q(\alpha)$ with a basis $\{ Lw_0, \dots , Lw_{d-1} \}$. By definition 
\[
\text{Tr}(\alpha^i w_ j) =\delta_{ij}, \text{ for } i, j = 0, \dots , d-1 \iff w_j = \pi_1 (V^{-1} e_{j+1} ),
\]
where $\delta_{ij}$ denotes the Kronecker delta, $V$ is the Vandermonde matrix associated to the $d$ Galois conjugates of $\alpha$, $\pi_1$ denotes projection onto the first coordinate and $\{e_{1},\dots,e_{d}\}$ is the canonical basis of $\R^d$. These equations yield the $\Z$-linear independence of the $w_i$'s (by the $\Z$-linearity of the trace). Alternatively, the $w_j$'s can be calculated as (see \cite{lang2013algebraic}, Chapter 3, Proposition 2)
\[
w_j = \dfrac{m_j}{Q'(\alpha)},
\]
where
\[
\dfrac{Q(X)}{X - \alpha} = m_0 + \dots + m_{d-1}X^{d-1} ,\quad m_j \in \Z[\alpha].
\]

Let us observe from this last equation that the denominator of $Q'(\alpha)^{-1} \in \Q(\alpha)$ expressed in reduced form is a common denominator for all $w_j$'s, a fact that will be used later. Finally, to prove the $L w_j$'s generate $\mathscr{M}_L$, let $\rho \in \mathscr{M}_L$. Let $\Theta:\Q(\alpha)\hookrightarrow \C^d$ be the Minkowski embedding $\Theta (\rho) = (\sigma_1(\rho),\dots,\sigma_{d}(\rho))^T$. By definition of $\mathscr{M}_L$, for some $p_1,\dots,p_d \in \Z$ we have
\begin{align*}
V \Theta(\rho) &= ( L p_1, \dots , L p_d )^T,  \\
\iff\Theta(\rho) = L V^{-1} ( p_1, \dots , p_d )^T &= p_1 L \Theta(w_0) + \dots + p_d L \Theta (w_{d-1}) \\
\iff\rho &= p_1 L w_0 + \dots + p_d L w_{d-1}.
\end{align*}

This completes the proof of the claim. Each $w_j$ can be expressed as $\eta_j / E$ (possibly not in reduced form) with $\eta_j \in \Z[\alpha]$ and $E=E(\alpha)$ only depending on $\alpha$ (observation of the last paragraph). Suppose $L\nmid E$, and let $l>1$ be some factor of $L$ which is not a factor of $E$. Then, for every $\Z$-linear combination of the $Lw_j$'s, the coefficients of the numerator of this linear combination (written in the canonical basis of the number field) will have $l$ as a common factor, i.e., none of these $\Z$-linear combinations will belong to $\mathscr{P}_L$ (because of the coprimality condition). In consequence, this set is empty and $\mathscr{R}_L$ is empty too. This yields the conclusion of Proposition \ref{lattice}.

\section{Dependence of $r_0$}
The aim of this section is to prove Proposition \ref{propr_0}, which provides a lower bound for $r_0$ appearing in Theorems \ref{theorem1} and \ref{theorem2}. To accomplish this we will find an upper bound for the error in the approximation of the integrals of Corollary \ref{cor}. We are only able to do this in the case $\deg(\alpha)=4$. First, let us recall the notion of type of a real number: for a real number $\theta$ we say its \textit{type} is at most $\tau \geq 1$ if there exists a positive constant $c(\theta)$ such that for every $p\in \Z$ and $q\in\N$ we have
\[
\abs{q\theta - p} \geq \dfrac{c(\theta)}{q^\tau}.
\]
The set of real numbers satisfying an inequality like the one above for some $\tau\geq1$ is called the set of \textit{Diophantine numbers}.\\

We use a general bound for linear forms in logarithms to show that $\theta_{1}$ is Diophantine. We summarize this calculation in the next
\begin{lemma}
	Let $\alpha_1=e^{2\pi i \theta_{1}}$ be an algebraic number on the unit circle which is not a root of unity. Then $\theta_{1}$ is Diophantine and its type is bounded from above by some explicit constant $\tau_{\alpha_1}$.
\end{lemma}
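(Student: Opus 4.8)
The plan is to reduce the statement to a lower bound for a linear form in two logarithms of algebraic numbers, and then to invoke an effective version of Baker's theorem. Normalising so that $\theta_{1}\in(0,1/2)$ (which we may do, since the conclusion is unchanged under $\theta_{1}\mapsto\theta_{1}+\Z$ and under $\theta_{1}\mapsto-\theta_{1}$, and $\theta_{1}\in\{0,1/2\}$ would make $\alpha_{1}$ a root of unity), fix an integer $q\geq1$ and let $p\in\Z$ be the nearest integer to $q\theta_{1}$, so that $\norm{q\theta_{1}}_{_{\R/\Z}}=\abs{q\theta_{1}-p}$ and $\abs{p}\leq q+1$. If $\norm{q\theta_{1}}_{_{\R/\Z}}\geq 1/4$ there is nothing to prove, so assume it is small. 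Writing $\log$ for the principal branch (so that $\log\alpha_{1}=2\pi i\theta_{1}$, as $2\pi\theta_{1}\in(0,\pi)$), set
\[
\Lambda=q\log\alpha_{1}-2\pi i\,p=2\pi i\,(q\theta_{1}-p),
\]
so that $\abs{\Lambda}=2\pi\norm{q\theta_{1}}_{_{\R/\Z}}$. Since $-2\pi i$ is a value of $\log 1$, this $\Lambda$ is a linear form in the two logarithms $\log\alpha_{1}$ and $\log 1$ of algebraic numbers, with integer coefficients $q$ and $p$ both bounded in absolute value by $2q$.

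Next I would observe that $\Lambda\neq0$: indeed $\Lambda=0$ would force $\theta_{1}=p/q\in\Q$, i.e.\ $\alpha_{1}^{q}=1$, contradicting that $\alpha_{1}$ is not a root of unity. This is the only place the hypothesis is used. Now apply an explicit lower bound for linear forms in logarithms (Baker--W\"ustholz, or Matveev, or for two logarithms the sharper estimate of Laurent--Mignotte--Nesterenko): there is an effectively computable constant $\kappa=\kappa(\alpha_{1})>0$, depending only on $\deg_{\Q}(\alpha_{1})$ and the Weil height of $\alpha_{1}$ (the number of logarithms being fixed equal to $2$ and the height of $1$ being zero), such that
\[
\log\abs{\Lambda}\ \geq\ -\kappa\,\log\!\bigl(\max(\abs{q},\abs{p},e)\bigr)\ \geq\ -\kappa\log(2q).
\]
Hence $\abs{\Lambda}\geq(2q)^{-\kappa}$, and therefore $\norm{q\theta_{1}}_{_{\R/\Z}}\geq(2\pi)^{-1}(2q)^{-\kappa}$. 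Absorbing constants and adjusting for the finitely many small $q$, this yields $\abs{q\theta_{1}-p'}\geq c(\theta_{1})\,q^{-\tau_{\alpha_{1}}}$ for all $p'\in\Z$ and all $q\geq1$ with an explicit $\tau_{\alpha_{1}}$ (a fixed multiple of $\kappa$); the case $q\leq-1$ follows by symmetry. This is exactly the assertion that $\theta_{1}$ is Diophantine of type at most $\tau_{\alpha_{1}}$.

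The only genuinely nontrivial ingredient is the effective lower bound for linear forms in logarithms. A Liouville-type argument via the norm of the nonzero algebraic integer $\alpha_{1}^{q}-1$ only produces the exponential bound $\abs{\alpha_{1}^{q}-1}\gg\alpha^{-q}$ (the single growing conjugate of $\alpha$ being responsible), which is far too weak to give a finite Diophantine type; Baker's theorem is what converts the coefficient size $\max(\abs{q},\abs{p})=O(q)$ into a bound that is only \emph{polynomial} in $q$. Everything else is routine bookkeeping of the constants, whose dependence on $\alpha_{1}$ through its degree and height is classical and explicit, so that $\tau_{\alpha_{1}}$ can be written down in closed form.
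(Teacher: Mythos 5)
Your argument is correct and is essentially the paper's: both reduce the claim to an effective lower bound from the theory of linear forms in logarithms of algebraic numbers, the paper by lower-bounding $\abs{\alpha_1^q-1}$ via the inequality $\abs{\alpha_1^q-1}\leq 2\pi\abs{q\theta_1-p_q}$ together with Theorem~2.2 of Bugeaud's lecture notes, and you by lower-bounding the linear form $\Lambda=q\log\alpha_1-2\pi i\,p$ directly, which is the same estimate in a different guise. One caveat on your choice of black box: of the three results you offer, the two-logarithm bound of Laurent--Mignotte--Nesterenko is \emph{quadratic} in $\log\max(\abs{p},\abs{q})$ and would only yield $\norm{q\theta_1}_{_{\R/\Z}}\geq c\,q^{-\kappa\log q}$, which is not a finite Diophantine type, so you must use an estimate that is linear in $\log B$ (Baker--W\"ustholz or Matveev), exactly as the bound the paper cites is.
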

\begin{proof}
	Let us denote by $p_q$ the nearest integer to $q\theta_{1}$, i.e., $\norm{q\theta_{1}}_{\R/\Z} = \abs{q\theta_{1} - p_q}$. Note that
	\begin{align*}
	\abs{\alpha_1^q -1} &= \abs{e^{2\pi i q\theta_{1}} - e^{2\pi i p_q}}\\
	&\leq 2\pi \abs{q\theta_{1} - p_q}. 
	\end{align*}
	Now we use Theorem 2.2 from \cite{bugeaud2018linear} to deduce that for every $q\in \Z\setminus\{0\}$ holds $\log \abs{\alpha_1^q -1} \geq - \tau_{\alpha_1} \log(eq)$, for some explicit constant $\tau_{\alpha_1}>1$ depending only on $\alpha_1$, which finally implies 
	\[
	\abs{q\theta_{1} - p_q} \geq \dfrac{e^{-\tau_{\alpha_1}}}{2\pi q^{\tau_{\alpha_1}}} =: \dfrac{c_{\alpha_1}}{ q^{\tau_{\alpha_1}} }.
	\] 
	In other words, the type of $\theta_1$ is $\leq \tau_{\alpha_1}$.
\end{proof}
As we have already mentioned, our aim is to understand how fast is the approximation of the integrals appearing in Corollary \ref{cor} by Birkhoff sums. The classic way to do this is to use the Koksma-Hlawka inequality for integrals of dimension greater than one. Unfortunately, the integrand of the integrals in Corollary \ref{cor} are given by the functions
\[
F(x_1,\dots,x_m)=\mathds{1}_{_J} \left(\dfrac{-2\mathcal{R}(x_1,\dots,x_m)+a_p}{L}\right),
\]
$p=0,\dots,P-1$, which we do not expect to have bounded \textit{variation in the sense of Hardy and Krause} (see \cite{kuipers2012uniform}, Chapter 2, Definition 5.2). So we restrict ourselves to dimension 1 ($m=1$, so $\deg(\alpha)=4$), where we can prove that the variation of this function is bounded explicitly. This reduces ourselves to a classic result of Koksma which we recall next. We start with the definition of \textit{discrepancy}.
\begin{definition}
	Let $N\geq1$ and $(u_n)_{n\geq 1}\subset[0,1]$. The \textit{discrepancy} of the sequence $u_n$ is defined by
	\[
	D_N(u_n) = \sup_{0\leq \mathfrak{a}<\mathfrak{b}\leq1} \left| \dfrac{1}{N}\sum_{n=1}^N \mathds{1}_{_{[\mathfrak{a},\mathfrak{b}]}} (u_n) - (\mathfrak{b}-\mathfrak{a}) \right|.
	\]
\end{definition}	
By allowing only sets of the form $[0,\mathfrak{b}]$ in the definition of $D_N$, we obtain the \textit{star-discrepancy} $D^*_N$. It is obvious from the definition that $D^*_N(u_n) \leq D_N(u_n)$.

It is known (see \cite{kuipers2012uniform}, Chapter 2, Theorem 3.2) that the discrepancy of the sequence $(n\widetilde{\theta_{1}})_{n\geq1}$, where $\widetilde{\theta_{1}}=P\theta_{1}$, is bounded as
\[
D_N(n\widetilde{\theta_{1}}) \leq \mathfrak{D}N^{-1/\tau},
\]
where $\tau$ is any strict upper bound for the type of $\widetilde{\theta_{1}}$ and $\mathfrak{D}$ is an absolute constant. This fact is derived from the Erd\H{o}s-Turan inequality (see \cite{kuipers2012uniform}). The inequality above holds if we replace the upper bound for the type of $\widetilde{\theta_{1}}$ by an upper bound for the type of $\theta_{1}$, since $\widetilde{\theta_{1}}=P\theta_{1}$ (see the definition of upper bound for the type). Now we recall the Koksma inequality.
\begin{theorem}[see \cite{kuipers2012uniform}, Chapter 2, Theorem 5.1]
	Let $F:[0,1] \longrightarrow \R$ be a function of bounded variation $V(F) < +\infty$. For any sequence $(u_n)_{n\geq1} \subseteq[0,1]$ and $N\geq 1$, holds
	\[
	\left|\dfrac{1}{N}\sum_{n=1}^{N} F(u_n) - \int F(x)dx\right| \leq V(F)D^*_N(u_n).
	\]
\end{theorem}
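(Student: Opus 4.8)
The plan is to realize the left-hand side as a Stieltjes integral against the empirical distribution function of $(u_n)$, integrate by parts, and thereby reduce the estimate to a bound on $\int g_N\,dF$, where $g_N$ is the (uniformly small) discrepancy function.

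First I would introduce, for $x\in[0,1]$, the step function $A_N(x)=\frac1N\#\{1\le n\le N:u_n\le x\}$ and set $g_N(x)=A_N(x)-x$. Directly from the definition of the star-discrepancy (the supremum being over intervals $[0,a]$) one has $\sup_{x\in[0,1]}\abs{g_N(x)}=D^*_N(u_n)$, and passing to left limits, $\abs{g_N(x^-)}\le D^*_N(u_n)$ for every $x$. Since $dA_N=\frac1N\sum_{n=1}^N\delta_{u_n}$ as a measure, we have $\frac1N\sum_{n=1}^N F(u_n)=\int_{[0,1]}F\,dA_N$, while $\int_0^1F(x)\,dx$ is the Stieltjes integral of $F$ against $x\mapsto x$; hence the quantity to estimate equals $\int_{[0,1]}F\,dg_N$.

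Second, I would integrate by parts: $\int_{[0,1]}F\,dg_N=[Fg_N]_0^1-\int_{[0,1]}g_N\,dF$, where the boundary term vanishes because $g_N(1)=A_N(1)-1=0$ (every $u_n\le 1$) and, at the left endpoint, $g_N(0^-)=0$ (no $u_n<0$). Thus $\frac1N\sum_{n=1}^N F(u_n)-\int_0^1F(x)\,dx=-\int_{[0,1]}g_N\,dF$, and estimating crudely, $\bigl\lvert\int_{[0,1]}g_N\,dF\bigr\rvert\le\bigl(\sup_x\abs{g_N(x^-)}\bigr)\int_{[0,1]}\abs{dF}\le D^*_N(u_n)\,V(F)$, since the total variation of $F$ is exactly the total mass of the measure $\abs{dF}$. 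This is the asserted inequality.

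The only delicate point — and hence ``the hard part'' of an otherwise routine argument — is the validity of the integration-by-parts step when $F$ and the step function $A_N$ share a discontinuity, i.e. when some $u_n$ is a jump point of $F$, in which case the classical Riemann--Stieltjes integral $\int F\,dg_N$ fails to exist. I would circumvent this by working in the Lebesgue--Stieltjes setting, where $\int_{[0,1]}F\,dg_N$ is unambiguously defined ($F$ bounded measurable, $dg_N$ a finite signed measure) and the integration-by-parts identity can be proved rigorously by Fubini's theorem (writing $F(x)$ itself, up to an additive constant, as a Stieltjes integral $\int_{[0,x]}dF$ after a Jordan decomposition, and swapping the order of integration). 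An equivalent route is to decompose $F=F_c+F_j$ into its continuous part $F_c$ — for which the Riemann--Stieltjes argument runs without obstruction — and its saltus function $F_j=\sum_k c_k\chi_{[x_k,1]}$, with $\sum_k\abs{c_k}=V(F_j)$ and $V(F)=V(F_c)+V(F_j)$; for $F_j$ one computes directly that $\frac1N\sum_n F_j(u_n)-\int_0^1 F_j\,dx=-\sum_k c_k\,g_N(x_k^-)$, which is bounded by $V(F_j)\,D^*_N(u_n)$ using the one-sided estimate on $g_N$, and the two contributions add up to the claimed bound.
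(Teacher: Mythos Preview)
The paper does not prove this statement; it is quoted from the literature with a reference to \cite{kuipers2012uniform} and used as a black box. Your argument is the standard proof of Koksma's inequality (empirical distribution function, Abel summation / integration by parts, bound by $V(F)\cdot D^*_N$), and it is correct, including your care with the boundary terms and the case of shared discontinuities between $F$ and $A_N$.
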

Now we use the Koksma inequality for every function 
\[
F_p(x) = \mathds{1}_{_{J(2\delta)}}\left(\dfrac{-2\mathcal{R}(x)+a_p}{L}\right) \quad (p=0,\dots,P-1), 
\]
where $\delta=\delta(A,B,C)$ is the one obtained from Lemmas \ref{lemma1} to \ref{lemma4} for fixed $A,B,C >1$. Passing from $\delta$ to $2\delta$ will be useful in the calculations below to manage the term corresponding to $\abs{\sigma_0(\eta)}$. Note that by replacing $\delta$ with $2\delta$ in these lemmas, we can conclude that the integral of each $F_p$ is greater or equal to $2/5$: it is enough to look at the inequalities at the end of the proofs.

Indeed, from the proof of Lemma \ref{lemma1}, if $\delta>0$ satisfies
\[
1/2 < 1 - 2\delta - 0.4 - 4\delta c^{-1}\abs{\tilde{\eta}\beta^{n_2}}^{d},
\]
then $\delta' = 2\delta$ satisfies
\[
1 < 2 - 2\delta' - 0.8 - 4\delta' c^{-1}\abs{\tilde{\eta}\beta^{n_2}}^{d},
\]
i.e.,
\[
2/5 < 1 - 2\delta' - 0.4 - 4\delta' c^{-1}\abs{\tilde{\eta}\beta^{n_2}}^{d}.
\]

Same argument is valid in the case of Lemmas \ref{lemma2} and \ref{lemma4}. In the case of Lemma \ref{lemma3} the integral is still equal to 1 after this change.

The variation of $F_p$ is less or equal to $8\mathfrak{H}$, where $\mathfrak{H} =\ceil{2H} +2$ and $H = \max_{x\in\R/\Z} \left|\mathcal{R}(x)\right|$. Indeed, on any of the intervals $[0,1/4], [1/4,1/2], [1/2,3/4], [3/4,1]$ the number of intervals where the indicator function $\mathds{1}_{_{J(2\delta)}}$ is equal to one is bounded by $\ceil{2H/L}+2$ and consequently the variation of $F_p$ on any of these intervals is bounded by $2(\ceil{2H/L}+2)\leq 2\ceil{2H}+4 = 2\mathfrak{H}$. Since this is valid for all intervals, the total variation is bounded by $8\mathfrak{H}$. See Figure \ref{fig1}.

\begin{figure}[h]
	\centering
	\includegraphics[width=0.7\linewidth]{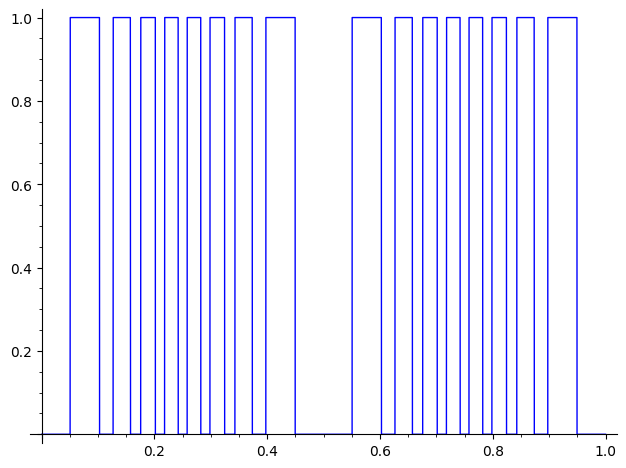}
	\caption{Graph of the function $F(x) = \mathds{1}_{[\delta,1-\delta]}(-4\cos(2\pi x))$ for $\delta = 0.2$. }
	\label{fig1}
\end{figure}

Define $n_0:= \ceil{\log_{\alpha}(\abs{\sigma_0(\eta)}/\delta)}$ and $N_p= \#\left\{0\leq n \leq N \:|\:  n\equiv p\: (\text{mod }P)\right\} \geq \floor{N/P}$. This fact and the Koksma inequality  allows us to affirm that for any $N>n_0$ and $p=0,\dots,P-1$
\begin{gather*}
\int_{\R/\Z} \mathds{1}_{_{J(2\delta)}}\left(\dfrac{-2\mathcal{R}(x)+a_p}{L}\right)dx - \dfrac{8\mathfrak{D}\mathfrak{H}}{\floor{N/P}^{1/\tau}} - \dfrac{n_0}{\floor{N/P}}\\
\leq\\
\dfrac{\#\left\{n_0\leq n\leq N,\: n\equiv p\: (\text{mod }P) \middle| \dfrac{-2\mathcal{R}_n+a_p}{L} \:(\text{mod }1) \in J(2\delta) \right\} }{\floor{N/P}}\\
\leq\\
\dfrac{\#\left\{n_0\leq n\leq N,\: n\equiv p\: (\text{mod }P)\middle| \dfrac{-2\mathcal{R}_n+a_p}{L} -\sigma_0(\eta)\alpha^{-n} \:(\text{mod }1) \in J(\delta) \right\} }{\floor{N/P}}\\
\leq\\
\dfrac{\#\left\{n_0\leq n\leq N,\: n\equiv p\: (\text{mod }P)\middle| \{\eta\alpha^n\} \in J(\delta) \right\} }{\floor{N/P}}\\
\leq\\
\dfrac{\#\left\{0\leq n\leq N,\: n\equiv p\: (\text{mod }P)\middle| \{\eta\alpha^n\} \in J(\delta) \right\} }{\floor{N/P}}.
\end{gather*}

Let us add these inequalities for $p=0,\dots,P-1$ and multiply by $\floor{N/P}/N$ both sides, yielding
\begin{gather*}
\dfrac{1}{P}\sum_{p=0}^{P-1}\int_{\R/\Z} \mathds{1}_{_{J(2\delta)}}\left(\dfrac{-2\mathcal{R}(x)+a_p}{L}\right)dx - \dfrac{1}{N/P}- \dfrac{8\mathfrak{D}\mathfrak{H}}{(N/P)^{1/\tau}} - \dfrac{n_0}{N/P}\\
\leq\\
\dfrac{\#\left\{0\leq n\leq N \middle| \{\eta\alpha^n\} \in J(\delta) \right\} }{N}.
\end{gather*}
Since each integral term on the left-hand side is greater than $2/5$ we conclude the right-hand side is greater than $1/3$ as soon as $N\geq N_0$, with $N_0$ defined by any integer solution to
\[
 \dfrac{8\mathfrak{D}\mathfrak{H}}{{(N_0/P)}^{1/\tau}} + \dfrac{n_0}{N_0/P} + \dfrac{P}{N_0} < \dfrac{1}{15}.
\]

In particular, we may choose any $N_0 \geq P\max\left(45n_0, (360\mathfrak{D}\mathfrak{H})^{\tau},45\right)$. In this manner, for $\eta =\omega\kappa = \dfrac{1}{L}(l_0 + \dots + l_{d-1}\alpha^{d-1}) \in \Q(\alpha)$, we conclude that we can take $r_0=r_0(\omega)$ in Theorem \ref{theorem1} as
\[
r_0 = \dfrac{c_\alpha}{\alpha^{P\max(\ceil{\log_{\alpha}(\abs{\sigma_0(\eta)}/\delta)},\mathfrak{H}^{\tau})}},
\]
for certain explicit constant $c_\alpha>0$, deduced from the relation $R_0 = \alpha^{N_0+1}$ and Lemmas \ref{Spectralbound}, \ref{lemmaprod}. Since we have $P\leq L^4$, we have proved the next

\begin{prop}\label{propr_0}
	Let $A,B,C > 1$ and $\omega \in \Q(\alpha)\setminus{\{0\}}$ satisfying the conditions of Theorem \ref{theorem1}. Suppose $\alpha$ is a Salem number of degree equal 4 and let $\alpha_1=e^{2\pi i \theta_1}$ be the Galois conjugate on the upper half of the unit circle. Then $\theta_{1}$ is Diophantine and if $\tau$ is an upper bound for its type, there exists a constant $c_\alpha>0$ only depending on $\alpha$ such that $r_0(\omega)$ appearing in Theorem \ref{theorem1} satisfies
	\[
	r_0(\omega) > \dfrac{c_\alpha}{\alpha^{A^4\max(\ceil{\log_{\alpha}(C/\delta)},\mathfrak{H}^\tau )} },
	\]
	where $\mathfrak{H} =\ceil{2H} +2$, $H = \sqrt{U(\theta_{1})^2 + V(\theta_{1})^2}$ (see Corollary \ref{cor} for definition of $U$ and $V$) and $\delta=\delta(A,B,C) > 0$ comes from Lemmas \ref{lemma1} to \ref{lemma4}.
\end{prop}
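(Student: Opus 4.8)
The plan is to make the threshold $R_0$ (equivalently $N_0$) appearing in Proposition~\ref{Spectralbound} and Lemma~\ref{lemmaprod} explicit, by quantifying how fast the Birkhoff averages $N^{-1}\#\{n\le N:\{\eta\alpha^n\}\in J(\delta)\}$ approach the limit computed in Corollary~\ref{cor}. When $m=1$, i.e. $\deg(\alpha)=4$, the function $R(x)$ is a single cosine $H\cos(2\pi x)$, so the composed integrands $F_j(x)=\chi_{_{J(2\delta)}}\big((-2R(x)+a_j)/L\big)$ are of bounded variation with an explicit bound $V(F_j)\le 8H$; this puts us in range of the one-dimensional Koksma inequality and avoids a Koksma--Hlawka estimate, which would require bounded variation in the sense of Hardy and Krause (not expected when $m\ge2$, whence the degree restriction).

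First I would record that $\theta_1$ is Diophantine: applying the bound for linear forms in two logarithms (\cite{bugeaud2018linear}, Theorem~2.2) to $\alpha_1^q-1$ gives an explicit type exponent $\tau_{\alpha_1}$, and since $\widetilde{\theta_1}=P\theta_1$ any $\tau>\tau_{\alpha_1}$ is admissible for $\widetilde{\theta_1}$ too; the Erd\H{o}s--Tur\'an inequality then yields $D_N(n\widetilde{\theta_1})\le DN^{-1/\tau}$ with $D$ absolute. Next I would re-run the estimates of Lemmas~\ref{lemma1}--\ref{lemma4} with the wider margin $J(2\delta)$ in place of $J(\delta)$, which still leaves $\int_{\R/\Z}F_j\,dx\ge 2/5$ for each $j$. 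Combining this with Koksma's inequality and the discrepancy bound, and using that for $n>n_0:=\ceil{\log_\alpha(\abs{\sigma_0(\eta)}/\delta)}$ one has $\abs{\sigma_0(\eta)\alpha^{-n}}<\delta$, so that $\{\eta\alpha^n\}\in J(\delta)$ follows from $(-2R_n+a_j)/L\in J(2\delta)$, I would sum over the residues $j=0,\dots,P-1$ to obtain
\[
\frac1P\sum_{j=0}^{P-1}\int_{\R/\Z}F_j\,dx-\frac{P}{N}-\frac{8DH}{(N/P)^{1/\tau}}-\frac{n_0}{N/P}\le\frac{\#\{1\le n\le N:\{\eta\alpha^n\}\in J(\delta)\}}{N}.
\]
Since the first term on the left is $\ge 2/5$, the right side exceeds $1/3$ once the three error terms sum to less than $1/15$; solving this gives an admissible value such as $N_0=P\max\big(45n_0,(360DH)^\tau,45\big)$.

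Finally I would plug $R_0=\alpha^{N_0+C_2+1}$ into Proposition~\ref{Spectralbound}, together with the constants of Lemma~\ref{lemmaprod}, so that $r_0$ is a fixed multiple of $\alpha^{-N_0}$, and then simplify under the hypotheses of Theorem~\ref{theorem1}: $\abs{\sigma_0(\eta)}\le C$ bounds $n_0$ by $\ceil{\log_\alpha(C/\delta)}$, while $L\le A$ bounds $P\le L^d\le A^4$; absorbing $45$, $D$ and $360$ into a constant $c_\alpha$ (and $(360DH)^\tau\le c'_\alpha H^\tau$) yields
\[
r_0(\omega)>\frac{c_\alpha}{\alpha^{A^4\max(\ceil{\log_\alpha(C/\delta)},\,H^\tau)}}.
\]
I expect the main obstacle to be the bounded-variation bound $V(F_j)\le 8H$: one must check that composing the periodic indicator $\chi_{_{J(2\delta)}}$ with the affine rescaling of $R(x)=H\cos(2\pi x)$ creates only a number of jumps controlled by $L$ times the total variation $8H/L$ of its argument. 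This is precisely where $m=1$ is used, and it is the only step beyond routine bookkeeping of explicit constants.
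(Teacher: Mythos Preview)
Your proposal is correct and follows essentially the same route as the paper: both prove $\theta_1$ Diophantine via linear forms in logarithms, bound the discrepancy by Erd\H{o}s--Tur\'an, apply the one-dimensional Koksma inequality to $F_j$ with the variation bound $V(F_j)\le 8H$, re-run Lemmas~\ref{lemma1}--\ref{lemma4} with $J(2\delta)$ to get integral $\ge 2/5$, and solve for $N_0=P\max(45n_0,(360DH)^\tau,45)$ before translating back through $R_0=\alpha^{N_0+C_2+1}$. Even your identification of the bounded-variation step as the place where $m=1$ is essential, and the reason Koksma--Hlawka is unavailable for $m\ge 2$, matches the paper's own discussion.
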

Almost identical calculations lead to an expression bounding $r_0(\omega)$ in the case of Theorem \ref{theorem2}, so we omit them.

\begin{section}{Appendix}
	In this section we give the details of the proof of Lemmas \ref{lemma2} and \ref{lemma4}.
	
	\textit{Proof of Lemma \ref{lemma2}}: let $\mathfrak{a}=\delta$, $\mathfrak{b}=1-\delta$ for $\delta>0$ a parameter we will choose later suitably to satisfy the conclusion of the lemma. Let $z(\vec{x})=-2\mathcal{R}(\vec{x})/L$. Then
	\begin{align*}
	\int \mathcal{B}_N(-z+\mathfrak{a})d\vec{x} &= \int \mathcal{V}_N(-z+\mathfrak{a}) + \dfrac{1}{2(N+1)}\Delta_{N+1}(-z+\mathfrak{a})d\vec{x} \\
	&= \int \dfrac{1}{N+1} \sum_{k=1}^N f\left(\dfrac{k}{N+1}\right)\sin(2\pi k(-z+\mathfrak{a}))d\vec{x}\\
	& + \: \int \dfrac{1}{2(N+1)} \left\{ 1 + \sum_{0<\abs{k}\leq N+1} \left(1 - \dfrac{\abs{k}}{N+1}\right)e^{2\pi i k (-z+\mathfrak{a})} \right\}d\vec{x} \\
	&= \underbrace{ \dfrac{-1}{N+1} \sum_{k=1}^N f\left(\dfrac{k}{N+1}\right)\sin(2\pi k\mathfrak{a}) \prod_{j=1}^m J_0\left(\dfrac{4\pi kH_{j}}{L}\right) }_{= \,(1)} \\
	& + \: \underbrace{ \dfrac{1}{2(N+1)} \left\{ 1 + \sum_{0<\abs{k}\leq N+1} \left(1 - \dfrac{\abs{k}}{N+1}\right)e^{2\pi i k\mathfrak{a}} \prod_{j=1}^m J_0\left(\dfrac{4\pi kH_{j}}{L}\right) \right\} }_{= \,(2)}.
	\end{align*}
	
	From the hypothesis follows that there exists $j^*\in\left\{1,\dots,m\right\}$ such that $2H_{j^*}/L\geq\delta_1(\beta)/2m$. Set $\mathfrak{g} = H_{j^*}/L$ and note $\mathfrak{g}^{-1} \leq 4m\delta_1(\beta)^{-1} = \mathfrak{h}^{-1}$. Define for each nonnegative integer $l$ the number $k_l = \floor{\mathfrak{h}^{-1}l}$. From now on, we will consider $N$ of the form $\floor{\mathfrak{h}^{-1}T}$, for each $T\in\N$. We use below the inequality for the Bessel function from Proposition \ref{Bessel}.
	\begin{align*}
	\abs{(2)} &\leq \dfrac{1}{2(N+1)} \left\{ 1 + 2\sum_{k=1}^{N+1} \left(1 - \dfrac{k}{N+1}\right) \prod_{j=1}^m \left|J_0\left(\dfrac{4\pi kH_j}{L}\right)\right| \right\} \\
	&\leq \dfrac{1}{2(N+1)} \left\{ 1 + 2\sum_{l=0}^{T-1}\sum_{k=k_l+1}^{k_{l+1}} \left(1 - \dfrac{k}{N+1}\right) \prod_{j=1}^m\left|J_0\left(\dfrac{4\pi kH_j}{L}\right)\right| \right\} \\
	&\leq \dfrac{1}{2(N+1)} \left\{ 1 + 2\sum_{k=1}^{k_{1}} \left(1 - \dfrac{k}{N+1}\right) \prod_{j=1}^m \underbrace{\left|J_0\left(\dfrac{4\pi kH_j}{L}\right)\right|}_{\leq 1} \right.\\
	& \left.+\: 2\sum_{l=1}^{T-1}\sum_{k=k_l+1}^{k_{l+1}} \left(1 - \dfrac{k}{N+1}\right) \underbrace{\prod_{j=1}^m \left|J_0\left(\dfrac{4\pi kH_j}{L}\right)\right|}_{\leq 1/\sqrt{2}\pi(k\mathfrak{g})^{1/2}} \right\}\\
	&\leq \dfrac{1}{2(N+1)} \left\{ 1 +  2\mathfrak{h}^{-1} + \dfrac{\sqrt{2}}{\pi}\sum_{l=1}^{T-1}\sum_{k=k_l+1}^{k_{l+1}} \underbrace{(k\mathfrak{g})^{-1/2}}_{\leq \:l^{-1/2}} \right\} \\
	&\leq \dfrac{1}{2(N+1)} \left\{ 1 + 2\mathfrak{h}^{-1} + \dfrac{\sqrt{2}}{\pi}\sum_{l=1}^{T-1} l^{-1/2} \underbrace{(k_{l+1}-k_l)}_{\leq \: \mathfrak{h}^{-1}} \right\} \\
	&\leq \dfrac{1}{2(N+1)} \left\{ 1 + 2\mathfrak{h}^{-1} + \dfrac{\sqrt{2}\mathfrak{h}^{-1}}{\pi}O(\sqrt{T}) \right\} \\
	&= O(1/\sqrt{T})
	\end{align*}
	Similarly,
	\[
	\int \mathcal{B}_N(z-\mathfrak{b})d\vec{x} = \dfrac{1}{N+1} \sum_{k=1}^N f\left(\dfrac{k}{N+1}\right)\sin(2\pi k\mathfrak{b}) \prod_{j=1}^m J_0\left(\dfrac{4\pi kH_j}{L}\right) + O(1/\sqrt{T}).
	\]
	
	Having estimated the integral of the Beurling polynomials, we continue with the Selberg ones:
	\begin{gather*}
	\left|\int \mathcal{B}_N(z-\mathfrak{b}) + \mathcal{B}_N(-z+\mathfrak{a}) d\vec{x}\,\right| \\
	\leq \\
	\left| \dfrac{1}{N+1}  \sum_{k=1}^N f\left(\dfrac{k}{N+1}\right) (\sin(2\pi k\mathfrak{b}) - \sin(2\pi k\mathfrak{a})) \prod_{j=1}^{m} J_0\left(\dfrac{4\pi kH_j}{L}\right) \right| + O(1/\sqrt{T}) \\
	= \\
	\underbrace{ \left| \dfrac{2}{N+1}  \sum_{k=1}^N f\left(\dfrac{k}{N+1}\right) \sin(2\pi k\delta) \prod_{j=1}^{m} J_0\left(\dfrac{4\pi kH_j}{L}\right) \right| }_{=(3)} + \:O(1/\sqrt{T}).
	\end{gather*}
	
	Fix $\epsilon>0$ and let us take $\xi<1/2$ such that $\pi\xi/\sin(\pi\xi) \leq 1+\epsilon$ (recall the definition and property of $f$ in Definition \ref{defValeer}). Then, for $T$ big enough, we obtain
	\begin{align*}
	(3) &\leq \dfrac{2}{N+1} \sum_{k=1}^N \left|f\left(\dfrac{k}{N+1}\right)\sin(2\pi k\delta)\prod_{j=1}^m J_0\left(\dfrac{4\pi kH_j}{L}\right) \right| \\
	&\leq \dfrac{2}{N+1}\left[\: \sum_{k=1}^{k_{1}} \abs{ f\left(\dfrac{k}{N+1}\right) } \underbrace{\abs{ \sin(2\pi k\delta) }}_{\leq2\pi k\delta} \prod_{j=1}^m \underbrace{\left|J_0\left(\dfrac{4\pi kH_j}{L}\right)\right|}_{\leq 1} \right.\\
	&\quad + \sum_{l=1}^{\floor{\xi T}}\sum_{k=k_l+1}^{k_{l+1}} \underbrace{\abs{f\left(\dfrac{k}{N+1}\right)}}_{\leq \left|f(l\mathfrak{h}^{-1}/(N+1))\right|} \underbrace{\abs{\sin(2\pi k\delta)}}_{\leq 1} \underbrace{ \prod_{j=1}^m\left|J_0\left(\dfrac{4\pi kH_j}{L}\right)\right|}_{\leq 1/\sqrt{2}\pi(k\mathfrak{g})^{1/2}} \\
	&\quad \left. + \sum_{l=\floor{\xi T}+1}^{T-1}\:\sum_{k=k_l+1}^{k_{l+1}} \abs{f\left(\dfrac{k}{N+1}\right)} \underbrace{\abs{\sin(2\pi k\delta)}}_{\leq 1} \underbrace{\prod_{j=1}^m \left| J_0\left(\dfrac{4\pi kH_j}{L}\right)\right|}_{\leq 1/\sqrt{2}\pi(k\mathfrak{g})^{1/2}} \: \right]\\	
	&\leq  \dfrac{4\pi\delta}{N+1} \sum_{k=1}^{k_{1}} \left(\dfrac{\pi\xi}{\sin(\pi\xi)}\dfrac{N+1}{\pi k}+\dfrac{1}{\pi}\right)k \\
	&\quad + \dfrac{\sqrt{2}}{\pi(N+1)}  \sum_{l=1}^{\floor{\xi T}} \abs{f\left(\dfrac{l\mathfrak{h}^{-1}}{N+1}\right)} \sum_{k=k_l+1}^{k_{l+1}} \underbrace{(k\mathfrak{g})^{-1/2}}_{\leq \:l^{-1/2}} \\
	&\quad + \dfrac{\sqrt{2}}{\pi(N+1)} \sum_{l=\floor{\xi T}+1}^{T-1} \left( \dfrac{1-\xi}{\sin(\pi(1-\xi))} + \dfrac{1}{\pi}\right) \:\sum_{k=k_l+1}^{k_{l+1}} \underbrace{(k\mathfrak{g})^{-1/2}}_{\leq \:l^{-1/2}}\\
	&\leq 4(1+\epsilon)\delta \mathfrak{h}^{-1} + O(1/T) + \dfrac{\sqrt{2}\mathfrak{h}^{-1}}{\pi(N+1)}\sum_{l=1}^{\floor{\xi T}} \left(\dfrac{\pi\xi}{\sin(\pi\xi)}\dfrac{(N+1)\mathfrak{h}}{\pi l}+\dfrac{1}{\pi}\right)l^{-1/2} \\
	&\quad + O(1/\sqrt{T}) \\
	&\leq 4(1+\epsilon)\delta \mathfrak{h}^{-1} + \dfrac{\sqrt{2}(1+\epsilon)\zeta\left(\dfrac{3}{2}\right)}{\pi^2}  + O(1/\sqrt{T}),
	\end{align*}
	where $\zeta$ denotes the Riemann zeta function. In the same manner,
	\[
	\left|\int \mathcal{B}_N(-z+\mathfrak{b}) + \mathcal{B}_N(z-\mathfrak{a}) d\vec{x}\,\right| \leq 4(1+\epsilon)\delta \mathfrak{h}^{-1} + \dfrac{\sqrt{2}(1+\epsilon)\zeta\left(\dfrac{3}{2}\right)}{\pi^2} + O(1/\sqrt{T}).
	\]
	
	It may be checked that $\dfrac{\sqrt{2}\zeta(3/2)}{\pi^2} < 0.4$. Since $\epsilon>0$ was arbitrarily chosen, by taking $T\to\infty$ we conclude by means of \ref{propertySelberg} that
	\[
	\left| \int_{(\R/\Z)^m} \mathds{1}_{_{J(\delta)}}(z(\vec{x}))d\vec{x} - \abs{J(\delta)}\right| \leq  4\delta \mathfrak{h}^{-1} + 0.4,
	\]
	which implies 
	\[
	1 - 2\delta - 0.4 - 4\delta \mathfrak{h}^{-1} \leq \int_{(\R/\Z)^m} \mathds{1}_{_{J(\delta)}}(z(\vec{x}))d\vec{x}.
	\]
	Then, by solving  $1/2 < 1 - 2\delta - 0.4 - 16\delta m\delta_1(\beta)^{-1}$, we obtain the desired $\delta = \delta(L)$.
	
	\textit{Proof of Lemma \ref{lemma4}}: let $\mathfrak{a}=\delta$, $\mathfrak{b}=1-\delta$ for $\delta>0$ a parameter we will choose later suitably to satisfy the conclusion of the lemma. Let $z(\vec{x}) = \dfrac{-2\mathcal{R}(\vec{x}) + l}{L}$. As before, we have
	\begin{align*}
	\int \mathcal{B}_N(-z+\mathfrak{a})d\vec{x} &= \underbrace{ \dfrac{-1}{N+1} \sum_{k=1}^N f\left(\dfrac{k}{N+1}\right)\sin(2\pi k\mathfrak{a}) \prod_{j=1}^m J_0\left(\dfrac{4\pi kH_{j}}{L}\right) }_{= \,(1)} \\
	& + \: \underbrace{ \dfrac{1}{2(N+1)} \left\{ 1 + \sum_{0<\abs{k}\leq N+1} \left(1 - \dfrac{\abs{k}}{N+1}\right)e^{2\pi i k\mathfrak{a}} \prod_{j=1}^m J_0\left(\dfrac{4\pi kH_{j}}{L}\right) \right\} }_{= \,(2)}.
	\end{align*}
	
	From the hypothesis follows that there exists $j^*\in\left\{1,\dots,m\right\}$ such that $H_{j^*}\geq1/4m$. Set $\mathfrak{g} = H_{j^*}/L$ and note $\mathfrak{g}^{-1} \leq 4mL = \mathfrak{h}^{-1}$. Define for each nonnegative integer $l$ the number $k_l = \floor{\mathfrak{h}^{-1}l}$. From now on, we will consider $N$ of the form $\floor{\mathfrak{h}^{-1}T}$, for each $T\in\N$. The same calculations of the proof above lead to $(2)=O(1/\sqrt{T})$. We conclude in the same manner that
	\begin{gather*}
	\left|\int \mathcal{B}_N(z-\mathfrak{b}) + \mathcal{B}_N(-z+\mathfrak{a}) d\vec{x}\,\right| \\
	\leq \\
	\left| \dfrac{1}{N+1}  \sum_{k=1}^N f\left(\dfrac{k}{N+1}\right) (\sin(2\pi k\mathfrak{b}) - \sin(2\pi k\mathfrak{a})) \prod_{j=1}^{m} J_0\left(\dfrac{4\pi kH_j}{L}\right) \right| + O(1/\sqrt{T}) \\
	= \\
	\underbrace{ \left| \dfrac{2}{N+1}  \sum_{k=1}^N f\left(\dfrac{k}{N+1}\right) \sin(2\pi k\delta) \prod_{j=1}^{m} J_0\left(\dfrac{4\pi kH_j}{L}\right) \right| }_{=(3)} + \:O(1/\sqrt{T}).
	\end{gather*}
	
	We repeat the procedure to bound (3) used in the proof above, yielding
	\[
	\left| \int_{(\R/\Z)^m} \mathds{1}_{_{J(\delta)}}(z(\vec{x}))d\vec{x} - \abs{J(\delta)}\right| \leq  4\delta \mathfrak{h}^{-1} + 0.4.
	\]
	Then, by solving  $1/2 < 1 - 2\delta - 0.4 - 16\delta mL$, we obtain the desired $\delta = \delta(L)$.
\end{section}
\bibliographystyle{amsalpha}
\bibliography{bibliografia.bib}
\end{document}